\newcommand{\NN}{\mathbb N}
\newcommand{\CC}{\mathbb C}
\newcommand{\RR}{\mathbb R}
\newcommand{\ZZ}{\mathbb Z}
\newcommand{\ds}{\displaystyle}
\newcommand{\EE}{\mathcal E}
\newcommand{\DD}{\mathcal D}
\newtheorem{theorem} {Theorem}[section]
\newtheorem{proposition}{Proposition}[section]
\newtheorem{lemma}{Lemma}[section]
\newtheorem{definition}{Definition}[section]
\newtheorem{remark}{Remark}[section]
\newcommand{\beq}{\begin{eqnarray}}
\newcommand{\eeq}{\end{eqnarray}}
\newcommand{\beqs}{\begin{eqnarray*}}
\newcommand{\eeqs}{\end{eqnarray*}}
\begin{document}

\title{Equivalence of several definitions of convolution \\ of  Roumieu ultradistributions}

\author{Stevan Pilipovi\'c \and Bojan Prangoski}
\date{}
\maketitle

\begin{abstract}
The equivalence of several definitions of convolution of two Roumieu ultradistributions is proved. For that purpose, the $\varepsilon$ tensor product of $\dot{\tilde{\mathcal{B}}}^{\{M_p\}}$ and a locally convex space $E$ is considered.
\end{abstract}
\textbf{Mathematics Subject Classification} 46F05, 46E10\\
\textbf{Keywords} ultradistributions, convolution\\

S. Pilipovi\'c, University of Novi Sad, Novi Sad, Serbia, stevan.pilipovic@gmail.com\\

B. Prangoski, University Ss. Cyril and Methodius, Skopje, Republic of Macedonia, bprangoski@yahoo.com

\section{Introduction}

Characterizations of the convolution of Roumieu type ultradistributions through the duality arguments for integrable ultradistributions was an open problem for many years. This paper gives such characterizations.

The existence of convolution of  distributions was considered by Schwartz \cite{SchwartzV}, \cite{SchwartzK} and later by many authors in various directions. In \cite{SchwartzV}, it is proved that if $S,T\in\DD'\left(\RR^d\right)$ are two distributions such that $S\otimes T\in\DD'_{L^1}\left(\RR^{2d}\right)$ then the convolutions $S*T$ can always be defined as an element of $\DD'\left(\RR^d\right)$. Later on, Shiraishi in \cite{Shiraishi} proved that this condition is equivalent to the condition that for every $\varphi\in\DD\left(\RR^d\right)$, $\left(\varphi*\check{S}\right)T\in\DD'_{L^1}\left(\RR^d\right)$. Many authors gave alternative definitions of convolution of two distributions and were proved that are equivalent to the Schwartz's definition (see, for example
\cite{DD}-\cite{K}, \cite{ort}-\cite{Wagner}, \cite{Shiraishi}, \cite{SI}). We refer also to an interesting resent paper  related to the existence of the convolution \cite{ort}. In the case of ultradistributions, the existence of convolution of two Beurling ultradistributions was studied  in \cite{PilipovicC} where the convolution is defined in analogous form to that of Schwartz. Later, in the case of Beurling ultradistributions,  in \cite{PK} was proved the equivalence of that definition and the analogous form of the Shiraishi's definition, as well as few other definitions. Our goal here is to give the equivalence of the Schwartz's and Shiraishi's definition in the case of Roumieu ultradistributions. Note that the sequential definitions or definitions through the Fourier transformations (and positions of wave fronts) are not considered in this paper although with the presented results such characterizations can be obtained. Our investigations are related essentially to the analysis of tensor product topologies within Roumieu type spaces so that the characterizations of convolution appear as  consequences of characterizations of  the $\varepsilon$ tensor product of $\dot{\tilde{\mathcal{B}}}^{\{M_p\}}$ and a locally convex space $E$.
Topological structure of Roumieu type spaces involves more complicated assertions and the proofs than in the case of Beurling type spaces where one can simply transfer the assertions from the Schwartz theory.
\section{Preliminaries}

The sets of natural (including zero), integer, positive integer, real and complex numbers are denoted by
$\NN$, $\ZZ$, $\ZZ_+$, $\RR$, $\CC$. We use the symbols for $x\in \RR^d$: $\langle x\rangle =(1+|x|^2)^{1/2} $,
$D^{\alpha}= D_1^{\alpha_1}\ldots D_n^{\alpha_d},\quad D_j^
{\alpha_j}={i^{-1}}\partial^{\alpha_j}/{\partial x}^{\alpha_j}\, ,$
$\alpha=(\alpha_1,\alpha_2,\ldots,\alpha_d)\in\NN^d$. A closed ball in $\RR^d$ with the center
at $x_0$ and radius $r>0$ is denoted by $K_{\RR^d}(x_0,r).$
As usual, in this theory, by  $M_{p}$, $p\in \NN$, $M_0=1$, is denoted  a  sequence  of  positive numbers for which we assume
(see \cite{Komatsu1}):
 $(M.1)$ $M_{p}^{2} \leq M_{p-1} M_{p+1}$, $p \in\ZZ_+$;
 $(M.2)$  $\ds M_{p} \leq c_0H^{p} \min_{0\leq q\leq p} \{ M_{p-q} M_{q}\}$, $p,q\in \NN$, for some $c_0,H\geq1$;
 $(M.3)$  $\ds\sum^{\infty}_{p=q+1}   M_{p-1}/M_{p}\leq c_0q M_{q}/M_{q+1}$, $q\in \ZZ_+$.\\
 For a multi-index $\alpha\in\NN^d$, $M_{\alpha}$ means $M_{|\alpha|}$, $|\alpha|=\alpha_1+...+\alpha_d$.

Let $U\subseteq\RR^d$ be an open set and $K\subset U$ be a compact set. We will always use the notation $K\subset\subset U.$
Recall,  $\EE^{\{M_p\},h}(K)$ is the space of all $\varphi\in \mathcal{C}^{\infty}(U)$ which satisfy $p_{K,h}(\phi)=\ds\sup_{\alpha\in\NN^d}\sup_{x\in K}\frac{|D^{\alpha}\varphi(x)|}{h^{\alpha}M_{\alpha}}<\infty$ and $\DD^{\{M_p\},h}_K$ as its subspace with elements supported by $K$.Then
$
\ds\EE^{\{M_p\}}(U)=\lim_{\substack{\longleftarrow\\ K\subset\subset U}}
\lim_{\substack{\longrightarrow\\ h\rightarrow \infty}} \EE^{\{M_p\},h}(K)$,
$\ds\DD^{\{M_p\}}_K=\lim_{\substack{\longrightarrow\\ h\rightarrow \infty}} \DD^{\{M_p\},h}_K$, $\ds\DD^{\{M_p\}}(U)=\lim_{\substack{\longrightarrow\\ K\subset\subset U}}\DD^{\{M_p\}}_K$.
The spaces of ultradistributions and ultradistributions with compact support of  Roumieu type are defined
as the strong duals of $\DD^{\{M_p\}}(U)$ and $\EE^{\{M_p\}}(U)$.
We refer to \cite{Komatsu1} for the properties of these spaces. In the sequel we will exclude  the notation $U$ when $U=\RR^d$.\\
\indent As in \cite{Komatsu1}, we define ultradifferential operators. It is said that $\ds P(\xi ) = \sum _{\alpha \in \NN^d} c_{\alpha } \xi ^{\alpha},\, \xi \in \RR^d$, is an
ultrapolynomial of the class  $\{M_{p}\}$, whenever the coefficients $c_{\alpha }$ satisfy
the estimate $|c_{\alpha }|  \leq C L^{\alpha }M_{\alpha}$, $\alpha \in \NN^d$,
for every $L > 0 $ and the corresponding $C_{L} > 0$. Then  $P(D)=\sum_{\alpha} c_{\alpha}D^{\alpha}$ is an
ultradifferential operator of the class $\{M_{p}\}$ and it acts continuously on
$\EE^{\{M_p\}}(U)$ and $\DD^{\{M_p\}}(U)$ and the corresponding spaces of ultradistributions.\\
\indent By $\mathfrak{R}$ is denoted a set of positive sequences which monotonically increases to infinity.
For $(t_j)\in\mathfrak{R}$, denote by $T_k$ the product $\ds \prod_{j=1}^k t_j$ and $T_0=1$. It is proved in \cite{Komatsu3} that the seminorms $\ds p_{K,(t_j)}(\varphi)=\ds \sup_{\alpha\in\NN^d}\sup_{x\in K} \frac{\left|D^{\alpha}\varphi(x)\right|}{T_{\alpha}M_{\alpha}}$, when $K$ ranges over the compact subsets of $U$ and $(t_j)$ in $\mathfrak{R}$, give the topology of $\EE^{\{M_p\}}(U)$. Also, for $K\subset\subset\RR^d$, the topology of $\DD^{\{M_p\}}_K$ is given by the seminorms
$p_{K,(t_j)}$, when $(t_j)$ ranges in $\mathfrak{R}$. From this it follows that $\ds \DD^{\{M_p\}}_K=\lim_{\substack{\longleftarrow\\ (t_j)\in\mathfrak{R}}} \DD^{M_p}_{K,(t_j)}$,
where $\DD^{M_p}_{K,(t_j)}$ is the Banach space of all $\mathcal{C}^{\infty}$ functions supported by $K$ for which the norm $p_{K,(t_j)}$ is finite. In the next sections we will need the following technical result.

\begin{lemma}\label{3}
Let $(t_p)\in\mathfrak{R}$. There exists $(t'_p)\in\mathfrak{R}$ such that $t'_p\leq t_p$ and $\ds\prod_{j=1}^{p+q}t'_j\leq 2^{p+q}\prod_{j=1}^{p}t'_j\cdot\prod_{j=1}^{q}t'_j$, for all $p,q\in\ZZ_+$.
\end{lemma}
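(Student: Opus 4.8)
The plan is to produce $(t'_p)$ explicitly, constant on the dyadic blocks $\{2^k,\dots,2^{k+1}-1\}$, with the value on consecutive blocks increasing by at most a factor $\sqrt2$. Precisely, I would set $d_0:=t_1$, define recursively $d_{k+1}:=\min\{\sqrt2\,d_k,\,t_{2^{k+1}}\}$ for $k\in\NN$, and put $t'_p:=d_{\lfloor\log_2 p\rfloor}$. Directly from the definition one has $d_k\le t_{2^k}$ for all $k$, hence $d_{k+1}=\min\{\sqrt2\,d_k,t_{2^{k+1}}\}\ge\min\{d_k,t_{2^{k+1}}\}=d_k$ (using $t_{2^{k+1}}\ge t_{2^k}\ge d_k$), so $(d_k)$, and therefore $(t'_p)$, is non-decreasing, and $t'_p=d_{\lfloor\log_2 p\rfloor}\le t_{2^{\lfloor\log_2 p\rfloor}}\le t_p$. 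Also $d_k\to\infty$: were $(d_k)$ bounded by some $M$, then for all large $k$ one would have $t_{2^{k+1}}>\sqrt2\,M\ge\sqrt2\,d_k$, hence $d_{k+1}=\sqrt2\,d_k$, forcing $d_k\to\infty$, a contradiction. Thus $(t'_p)\in\mathfrak R$ and $t'_p\le t_p$, and it remains only to verify the product inequality. Note that $(M.1)$--$(M.3)$ play no role; the lemma is purely a statement about $\mathfrak R$.

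For the main inequality I would put $\gamma_k:=\log_2 d_k$, so that $\gamma_{k+1}-\gamma_k\le\log_2\sqrt2=\tfrac12$ and hence $\gamma_a-\gamma_b\le\tfrac12(a-b)$ for $a\ge b$, and $\Phi(n):=\sum_{j=1}^n\log_2 t'_j=\sum_{j=1}^n\gamma_{\lfloor\log_2 j\rfloor}$ (with $\Phi(0)=0$); the assertion is equivalent to $\Phi(p+q)\le p+q+\Phi(p)+\Phi(q)$. Taking $p\le q$ without loss of generality,
\[
\Phi(p+q)-\Phi(p)-\Phi(q)=\sum_{j=q+1}^{p+q}\gamma_{\lfloor\log_2 j\rfloor}-\sum_{j=1}^{p}\gamma_{\lfloor\log_2 j\rfloor}=\sum_{i=1}^{p}\bigl(\gamma_{\lfloor\log_2(q+i)\rfloor}-\gamma_{\lfloor\log_2 i\rfloor}\bigr),
\]
and since $q+i>i\ge1$ forces $\lfloor\log_2(q+i)\rfloor\ge\lfloor\log_2 i\rfloor\ge0$, the Lipschitz bound on $(\gamma_k)$ together with the elementary estimate $\lfloor\log_2(q+i)\rfloor-\lfloor\log_2 i\rfloor\le\log_2\frac{q+i}{i}+1$ yields
\[
\Phi(p+q)-\Phi(p)-\Phi(q)\le\tfrac12\sum_{i=1}^{p}\Bigl(\log_2\tfrac{q+i}{i}+1\Bigr)=\tfrac12\Bigl(p+\log_2\prod_{i=1}^{p}\tfrac{q+i}{i}\Bigr)=\tfrac12\Bigl(p+\log_2\binom{p+q}{p}\Bigr).
\]
Because $\binom{p+q}{p}\le 2^{p+q}$, the right-hand side is at most $\tfrac12\bigl(p+(p+q)\bigr)\le p+q$, which is exactly the required bound.

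The only real difficulty is guessing the construction; the verification above is then a matter of a few lines. A naive minorant such as $t'_p=\min\{t_p,p\}$ does not work: if $(t_p)$ stays essentially constant over a long initial range and then jumps up sharply, the partial product $\prod_{j\le p}t'_j$ is far too small relative to $\prod_{j\le 2p}t'_j$, and the estimate with $p=q$ fails badly. The point is to decouple the indices at which $t'$ is allowed to grow (the dyadic blocks) from the possibly erratic growth of $(t_p)$, and to cap the block-to-block ratio by $\sqrt2$; the binomial coefficient $\binom{p+q}{p}$ then appears naturally, and the bound $\binom{p+q}{p}\le2^{p+q}$ is precisely what produces the constant $2$ in the statement. (If elements of $\mathfrak R$ are required to be strictly increasing, one replaces $t'_p$ by $t'_p(1+\varepsilon_p)$ with $(\varepsilon_p)$ strictly increasing and $\sup_p\varepsilon_p$ small, and uses $d_{k+1}=\min\{\sqrt2\,d_k,\tfrac12 t_{2^{k+1}}\}$; this changes nothing essential.)
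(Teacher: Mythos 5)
Your proof is correct, but it takes a genuinely different (and longer) route than the paper. The paper's construction is a direct ratio cap: set $t'_1=t_1$ and $t'_j=\min\bigl\{t_j,\tfrac{j}{j-1}t'_{j-1}\bigr\}$ for $j\ge2$, which forces $t'_j/j$ to be non-increasing; telescoping immediately gives $t'_{p+j}\le\tfrac{p+j}{j}\,t'_j$, and multiplying over $j=1,\dots,q$ yields $T'_{p+q}\le\tfrac{(p+q)!}{p!\,q!}\,T'_pT'_q\le2^{p+q}T'_pT'_q$ in two lines, with the same short contradiction argument as yours for $t'_j\to\infty$ (if the second branch of the minimum were eventually always active, $t'_j$ would grow linearly). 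Both arguments rest on the same two ingredients --- a cap on how fast the minorant may grow, calibrated so that the accumulated growth over a shifted window is a ratio of factorials, followed by the bound $\binom{p+q}{p}\le2^{p+q}$ --- so the difference is packaging: your dyadic-block construction with ratio $\sqrt2$ per block decouples the growth from the index more coarsely and then needs the logarithmic bookkeeping with $\lfloor\log_2\cdot\rfloor$ to reassemble the binomial coefficient, whereas the cap $\tfrac{j}{j-1}$ produces it at once. Your side remarks are also accurate: conditions $(M.1)$--$(M.3)$ play no role, and the naive choice $t'_p=\min\{t_p,p\}$ indeed fails (for $(t_p)$ nearly constant up to $N$ and then large, $T'_{2N}/(T'_N)^2$ is of order $(2N)!/N!$, which exceeds $4^N$).
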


\begin{proof} Define $t'_1=t_1$ and inductively $\ds t'_j=\min\left\{t_j,\frac{j}{j-1}t'_{j-1}\right\}$,
for $j\geq 2$, $j\in\NN$. The monotonicity of $(t_p)$ implies
$t'_{j+1}\geq t'_{j},\, j\in\ZZ_+$. To prove that $t'_j\rightarrow\infty$, assume the contrary. Then, for large enough $j_0$ and for $j>j_0,$ $
t'_j=(j/j_0)t'_{j_0}\rightarrow \infty,
$
$j\rightarrow \infty$, which is a contradiction;  so $(t'_j)\in\mathfrak{R}$. Note that, for all $p,j\in\ZZ_+$, we have
$
\ds t'_{p+j}\leq \frac{p+j}{j}t'_{j};
$ thus, $\ds T'_{p+q}=T'_p\cdot\prod_{j=1}^{q}t'_{p+j}\leq
\frac{(p+q)!}{p!q!}T'_pT'_{q}\leq 2^{p+q}T'_pT'_{q}$.\qed
\end{proof}
\indent As in \cite{PilipovicT}, we define $\tilde{\DD}^{\{M_p\}}_{L^{\infty}}\left(\RR^d\right)$ as the space of all $\mathcal{C}^{\infty}\left(\RR^d\right)$ functions such that, for every $(t_j)\in\mathfrak{R}$, the norm $\ds \|\varphi\|_{(t_j)}=\sup_{\alpha\in\NN^d}\sup_{x\in\RR^d}\frac{\left|D^{\alpha}\varphi(x)\right|}{T_{\alpha}M_{\alpha}}$ is finite. The space $\tilde{\DD}^{\{M_p\}}_{L^{\infty}}$ is complete Hausdorff locally convex space (from now on abbreviated as l.c.s.) because $\ds \tilde{\DD}^{\{M_p\}}_{L^{\infty}}=\lim_{\substack{\longleftarrow\\ (t_j)\in\mathfrak{R}}}\tilde{\DD}^{M_p}_{L^{\infty},(t_j)}$, where $\tilde{\DD}^{M_p}_{L^{\infty},(t_j)}$ is a Banach space ($(B)$ - space) of all $\mathcal{C}^{\infty}$ functions for which the norm $\|\cdot\|_{(t_j)}$ is finite. Denote by $\dot{\tilde{\mathcal{B}}}^{\{M_p\}}$ the completion of $\DD^{\{M_p\}}$ in $\tilde{\DD}^{\{M_p\}}_{L^{\infty}}$. The strong dual of $\dot{\tilde{\mathcal{B}}}^{\{M_p\}}$ will be denoted by $\tilde{\DD}'^{\{M_p\}}_{L^1}$. In the next sections we will need the following lemma that characterizes the elements of $\dot{\tilde{\mathcal{B}}}^{\{M_p\}}$.

\begin{lemma}\label{5}
$\varphi\in\dot{\tilde{\mathcal{B}}}^{\{M_p\}}$ if and only if $\varphi\in\tilde{\DD}^{\{M_p\}}_{L^{\infty}}$ and for every $\varepsilon>0$ and $(t_j)\in\mathfrak{R}$ there exists a compact set $K$ such that $\ds\sup_{\alpha\in\NN^d}\sup_{x\in\RR^d\backslash K} \frac{\left|D^{\alpha}\varphi(x)\right|}{T_{\alpha}M_{\alpha}}<\varepsilon$.
\end{lemma}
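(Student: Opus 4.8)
The plan is to prove both implications directly from the definition of $\dot{\tilde{\mathcal{B}}}^{\{M_p\}}$ as the closure of $\DD^{\{M_p\}}$ in $\tilde{\DD}^{\{M_p\}}_{L^\infty}$, whose topology is the projective limit of the Banach spaces $\tilde{\DD}^{M_p}_{L^\infty,(t_j)}$ over $(t_j)\in\mathfrak{R}$. For the forward direction, suppose $\varphi\in\dot{\tilde{\mathcal{B}}}^{\{M_p\}}$. Fix $\varepsilon>0$ and $(t_j)\in\mathfrak{R}$. By definition of the closure there is a net, in fact a sequence, $\psi_n\in\DD^{\{M_p\}}$ with $\psi_n\to\varphi$ in $\tilde{\DD}^{\{M_p\}}_{L^\infty}$; in particular $\|\varphi-\psi_n\|_{(t_j)}\to 0$, so choose $n$ with $\|\varphi-\psi_n\|_{(t_j)}<\varepsilon$. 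Put $K=\supp\psi_n$, which is compact. Then for $x\in\RR^d\setminus K$ we have $D^\alpha\varphi(x)=D^\alpha(\varphi-\psi_n)(x)$, hence
\[
\sup_{\alpha\in\NN^d}\sup_{x\in\RR^d\setminus K}\frac{|D^\alpha\varphi(x)|}{T_\alpha M_\alpha}\leq \|\varphi-\psi_n\|_{(t_j)}<\varepsilon,
\]
which is exactly the stated decay condition.

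For the converse, assume $\varphi\in\tilde{\DD}^{\{M_p\}}_{L^\infty}$ satisfies the decay condition; we must produce, for each $(t_j)\in\mathfrak{R}$ and each $\varepsilon>0$, an element of $\DD^{\{M_p\}}$ within $\varepsilon$ of $\varphi$ in $\|\cdot\|_{(t_j)}$. The natural device is to multiply $\varphi$ by a cutoff $\chi_R\in\DD^{\{M_p\}}$ that equals $1$ on a large ball $K_{\RR^d}(0,R)$, is supported in $K_{\RR^d}(0,R+1)$, and has derivative bounds $|D^\beta\chi_R(x)|\leq C L^\beta M_\beta$ uniformly in $R$ for suitable constants — such ``plateau'' functions exist in the Roumieu class, e.g. built from the Denjoy–Carleman construction using $(M.1)$, $(M.2)$, $(M.3)$. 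Then $\chi_R\varphi\in\DD^{\{M_p\}}$, and $\varphi-\chi_R\varphi$ is supported in $\RR^d\setminus K_{\RR^d}(0,R)$ and equals $(1-\chi_R)\varphi$. Applying the Leibniz rule, $D^\alpha((1-\chi_R)\varphi)=\sum_{\beta\leq\alpha}\binom{\alpha}{\beta}D^\beta(1-\chi_R)\,D^{\alpha-\beta}\varphi$, and each term is supported outside $K_{\RR^d}(0,R)$.

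The delicate point — and the main obstacle — is that estimating $\|(1-\chi_R)\varphi\|_{(t_j)}$ requires controlling a product of two sequence-type bounds, and the ``bad'' sequence $(t_j)$ in the target norm must be matched against the decay hypothesis, which is only available for a possibly different sequence. Here is where Lemma~\ref{3} enters: given the target $(t_j)$, replace it by $(t'_j)\in\mathfrak{R}$ with $t'_j\leq t_j$ and $T'_{p+q}\leq 2^{p+q}T'_pT'_q$, so that the Leibniz sum splits multiplicatively, $T_\alpha M_\alpha\geq T'_\alpha M_\alpha$ and $T'_\alpha M_\alpha\geq c^{-1}2^{-|\alpha|}T'_{\alpha-\beta}M_{\alpha-\beta}\cdot T'_\beta M_\beta$ up to the $(M.2)$ constant; combined with $\sum_{\beta\leq\alpha}\binom{\alpha}{\beta}2^{-|\alpha|}\leq 1$ and the uniform-in-$R$ bound $|D^\beta(1-\chi_R)(x)|\leq C'L^\beta M_\beta$, the sum over $\beta$ is absorbed into a single constant times $\sup_{x\notin K_{\RR^d}(0,R)}|D^{\alpha-\beta}\varphi(x)|/(T'_{\alpha-\beta}M_{\alpha-\beta})$. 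Choosing the sequence in the decay hypothesis to be one that dominates $L\cdot(t'_j)$ (after another application of Lemma~\ref{3} if needed to handle the constant $L$ and the shift in indices), and taking $R$ large so that this supremum is $<\varepsilon/C$, yields $\|\varphi-\chi_R\varphi\|_{(t_j)}<\varepsilon$. This shows $\varphi$ lies in the closure of $\DD^{\{M_p\}}$, completing the proof.
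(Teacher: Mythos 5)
Your proposal is correct and follows essentially the same route as the paper: there, too, the lemma is proved by showing that the subspace cut out by the decay condition is closed and that $\DD^{\{M_p\}}$ is dense in it, the density being obtained by multiplying $\varphi$ with cutoffs and running a Leibniz estimate in which the target norm is traded for a $\|\cdot\|_{(t_j/2)}$-type norm to absorb the binomial sum. The only real difference is cosmetic: the paper uses the dilated cutoffs $\chi(\cdot/n)$, so every derivative term in the Leibniz sum picks up a factor $n^{-|\beta|}$ and only the $\beta=0$ term needs the decay hypothesis, whereas your $R$-uniform plateau cutoffs force all terms to be controlled by the decay of $\varphi$ outside $K_{\RR^d}(0,R)$ --- both variants work (and note that in your intermediate inequality the super-multiplicativity $T'_{\alpha}\geq T'_{\beta}T'_{\alpha-\beta}$ already holds by monotonicity, with no $2^{-|\alpha|}$ loss and no appeal to Lemma \ref{3}).
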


\begin{proof} Let $E$ be the subspace of $\tilde{\DD}^{\{M_p\}}_{L^{\infty}}$ defined by the conditions of the lemma.
 It is enough to prove that $E$ is complete and that $\DD^{\{M_p\}}$ is dense in $E$.
 By the standard arguments one can prove that $E$ is closed and so complete.
 The proof will be done if we prove that  $\DD^{\{M_p\}}$ is sequently dense in $E$.
  Let $\varphi\in E$. Take $\chi\in\DD^{\{M_p\}}$ such that $\chi=1$ on
the ball $K_{\RR^d}(0,1)$  and $\chi=0$ out of  $K_{\RR^d}(0,2)$. Then $\left|D^{\alpha}\chi(x)\right|\leq C_1h^{|\alpha|}M_{\alpha}$ for some $h>0$ and $C_1>0$. For $n\in\ZZ_+$, put $\chi_n(x)=\chi(x/n)$ and $\varphi_n=\chi_n\varphi$. Then $\varphi_n\in\DD^{\{M_p\}}$. Let $(t_j)\in\mathfrak{R}$. We have\\
$\ds \frac{\left|D^{\alpha}\varphi(x)-D^{\alpha}\varphi_n(x)\right|}{T_{\alpha}M_{\alpha}}$
\begin{eqnarray*}
&\leq& \frac{\left|1-\chi(x/n)\right|\left|D^{\alpha}\varphi(x)\right|}{T_{\alpha}M_{\alpha}}+
\sum_{\substack{\beta\leq\alpha\\ \beta\neq 0}} {\alpha\choose\beta}\frac{\left|D^{\beta}\chi(x/n)\right|\left|D^{\alpha-\beta}\varphi(x)\right|} {n^{|\beta|}T_{\alpha}M_{\alpha}}\\
&\leq&\frac{\left|1-\chi(x/n)\right|\left|D^{\alpha}\varphi(x)\right|}{T_{\alpha}M_{\alpha}}+
\frac{C_1\|\varphi\|_{(t_j/2)}}{n}\sum_{\substack{\beta\leq\alpha\\ \beta\neq 0}} {\alpha\choose\beta} \frac{h^{|\beta|}T_{\alpha-\beta}}{2^{|\alpha|-|\beta|}T_{\alpha}}\\
&\leq& \varepsilon+
\frac{C_1C_2\|\varphi\|_{(t_j/2)}}{n},\, n>n_0,
\end{eqnarray*}
independently of $x$ and $\alpha$, for large enough $n_0$. This implies the assertion.\qed
\end{proof}
Also we have the following easy fact.
\begin{lemma}\label{7}
The bilinear mapping  $\tilde{\DD}^{\{M_p\}}_{L^{\infty}} \times\dot{\tilde{\mathcal{B}}}^{\{M_p\}}\longrightarrow\dot{\tilde{\mathcal{B}}}^{\{M_p\}}$,
$(\varphi,\psi)\mapsto \varphi\psi$, is continuous.
\end{lemma}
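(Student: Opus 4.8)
The plan is to reduce everything to a single seminorm inequality. A bilinear map between l.c.s.\ is continuous iff it is continuous at the origin, i.e.\ iff every continuous seminorm on the target is dominated by a product of continuous seminorms on the two factors; and the topology of $\tilde{\DD}^{\{M_p\}}_{L^{\infty}}$, hence also of its subspace $\dot{\tilde{\mathcal{B}}}^{\{M_p\}}$, is generated by the norms $\|\cdot\|_{(t_j)}$, $(t_j)\in\mathfrak{R}$. So it is enough to show: for every $(t_j)\in\mathfrak{R}$ there is $(s_j)\in\mathfrak{R}$ with
\[
\|\varphi\psi\|_{(t_j)}\leq\|\varphi\|_{(s_j)}\,\|\psi\|_{(s_j)},\qquad \varphi\in\tilde{\DD}^{\{M_p\}}_{L^{\infty}},\ \psi\in\dot{\tilde{\mathcal{B}}}^{\{M_p\}}.
\]
In particular this already yields $\varphi\psi\in\tilde{\DD}^{\{M_p\}}_{L^{\infty}}$; the one remaining point is that $\varphi\psi$ actually lies in $\dot{\tilde{\mathcal{B}}}^{\{M_p\}}$.

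To prove the inequality, fix $(t_j)\in\mathfrak{R}$ and, by Lemma \ref{3}, pick $(t'_j)\in\mathfrak{R}$ with $t'_j\leq t_j$ (so that $T'_\alpha\leq T_\alpha$ and $\|\cdot\|_{(t_j)}\leq\|\cdot\|_{(t'_j)}$) and $\prod_{j=1}^{p+q}t'_j\leq 2^{p+q}\prod_{j=1}^{p}t'_j\prod_{j=1}^{q}t'_j$. Set $s_j:=t'_j/8\in\mathfrak{R}$, so that $T'_\gamma=8^{|\gamma|}S_\gamma$ with $S_\gamma=\prod_{j=1}^{|\gamma|}s_j$. Starting from the Leibniz rule $D^\alpha(\varphi\psi)=\sum_{\beta\leq\alpha}\binom{\alpha}{\beta}D^\beta\varphi\,D^{\alpha-\beta}\psi$, I would bound $1/(T'_\alpha M_\alpha)$ from above by means of the elementary facts $M_\beta M_{\alpha-\beta}\leq M_\alpha$ (a standard consequence of $(M.1)$: logarithmic convexity together with $M_0=1$ gives $M_pM_q\leq M_{p+q}$), $T'_\alpha\leq 2^{|\alpha|}T'_\beta T'_{\alpha-\beta}$ (from the choice of $(t'_j)$; trivially so when $|\beta|\in\{0,|\alpha|\}$), $\binom{\alpha}{\beta}\leq 2^{|\alpha|}$, and $\#\{\beta:\beta\leq\alpha\}\leq 2^{|\alpha|}$. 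Combining these, uniformly in $x$ and $\alpha$,
\[
\frac{|D^\alpha(\varphi\psi)(x)|}{T'_\alpha M_\alpha}\leq 4^{|\alpha|}\sum_{\beta\leq\alpha}\frac{|D^\beta\varphi(x)|}{T'_\beta M_\beta}\cdot\frac{|D^{\alpha-\beta}\psi(x)|}{T'_{\alpha-\beta}M_{\alpha-\beta}}=2^{-|\alpha|}\sum_{\beta\leq\alpha}\frac{|D^\beta\varphi(x)|}{S_\beta M_\beta}\cdot\frac{|D^{\alpha-\beta}\psi(x)|}{S_{\alpha-\beta}M_{\alpha-\beta}}\leq\|\varphi\|_{(s_j)}\|\psi\|_{(s_j)},
\]
and passing to the supremum, together with $\|\cdot\|_{(t_j)}\leq\|\cdot\|_{(t'_j)}$, gives the desired estimate.

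Finally, I would check $\varphi\psi\in\dot{\tilde{\mathcal{B}}}^{\{M_p\}}$ by a density argument. If $\psi\in\DD^{\{M_p\}}$ with $\supp\psi\subseteq K\subset\subset\RR^d$, then $\varphi\psi$ is smooth, supported in $K$, and the same Leibniz estimate with $\sup$ over $x\in K$ shows $p_{K,(t_j)}(\varphi\psi)<\infty$ for every $(t_j)\in\mathfrak{R}$, hence $\varphi\psi\in\DD^{\{M_p\}}_K\subseteq\DD^{\{M_p\}}$ by the projective representation $\DD^{\{M_p\}}_K=\lim_{\substack{\longleftarrow\\ (t_j)\in\mathfrak{R}}}\DD^{M_p}_{K,(t_j)}$ recalled above; note one cannot shortcut this via an $\EE^{\{M_p\},h}$ bound, since a generic $\varphi\in\tilde{\DD}^{\{M_p\}}_{L^{\infty}}$ satisfies no estimate $|D^\alpha\varphi|\leq Ch^{|\alpha|}M_\alpha$. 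Now fix $\varphi$: the inequality above makes $\psi\mapsto\varphi\psi$ continuous from $\dot{\tilde{\mathcal{B}}}^{\{M_p\}}$ into $\tilde{\DD}^{\{M_p\}}_{L^{\infty}}$, it maps the dense subspace $\DD^{\{M_p\}}$ into $\DD^{\{M_p\}}\subseteq\dot{\tilde{\mathcal{B}}}^{\{M_p\}}$, and $\dot{\tilde{\mathcal{B}}}^{\{M_p\}}$ is closed in $\tilde{\DD}^{\{M_p\}}_{L^{\infty}}$ (being the completion of $\DD^{\{M_p\}}$ inside the complete space $\tilde{\DD}^{\{M_p\}}_{L^{\infty}}$, i.e.\ its closure there); so $\varphi\psi\in\dot{\tilde{\mathcal{B}}}^{\{M_p\}}$ for all $\psi\in\dot{\tilde{\mathcal{B}}}^{\{M_p\}}$. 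Since $\dot{\tilde{\mathcal{B}}}^{\{M_p\}}$ carries the induced topology, the displayed inequality is exactly the joint continuity at the origin that was needed. (Alternatively, membership in $\dot{\tilde{\mathcal{B}}}^{\{M_p\}}$ can be read off directly from Lemma \ref{5}, by redoing the Leibniz estimate for $x$ outside a compact set on which, by Lemma \ref{5} applied to $\psi$, the $\psi$-factor is already small.)

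As the authors indicate, there is no real obstacle here; the only step demanding care is the constant bookkeeping in the Leibniz estimate, and in particular the appeal to Lemma \ref{3}: without the inequality $T'_{p+q}\leq 2^{p+q}T'_pT'_q$ the factor $T'_\alpha$ in the denominator cannot be distributed across the Leibniz sum — $T'_\alpha$ can be far larger than $T'_\beta T'_{\alpha-\beta}$ — so that reduction is really the heart of the argument.
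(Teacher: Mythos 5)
The paper offers no proof of this lemma at all (it is introduced with ``Also we have the following easy fact''), so there is nothing to compare against; your write-up supplies the missing argument, and its overall structure --- reduce to the seminorm estimate $\|\varphi\psi\|_{(t_j)}\leq\|\varphi\|_{(s_j)}\|\psi\|_{(s_j)}$ via Leibniz, then settle membership in $\dot{\tilde{\mathcal{B}}}^{\{M_p\}}$ by density of $\DD^{\{M_p\}}$ and closedness of $\dot{\tilde{\mathcal{B}}}^{\{M_p\}}$ in $\tilde{\DD}^{\{M_p\}}_{L^{\infty}}$ --- is correct and is surely what the authors had in mind. The displayed inequality chain is valid and the membership argument is clean.

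However, the justification you give for the key step is inverted, and your closing claim that Lemma \ref{3} is ``really the heart of the argument'' is wrong. To pass from $1/(T'_{\alpha}M_{\alpha})$ to $1/(T'_{\beta}T'_{\alpha-\beta}M_{\beta}M_{\alpha-\beta})$ you need an \emph{upper} bound on the latter's reciprocal, i.e.\ the inequality $T'_{\beta}T'_{\alpha-\beta}\leq T'_{\alpha}$ --- and this holds for \emph{every} $(t'_j)\in\mathfrak{R}$ simply because the sequence is monotonically increasing: $\prod_{j=1}^{q}t'_j\leq\prod_{j=p+1}^{p+q}t'_j$, hence $T'_pT'_q\leq T'_{p+q}$. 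The inequality you invoke from Lemma \ref{3}, $T'_{\alpha}\leq 2^{|\alpha|}T'_{\beta}T'_{\alpha-\beta}$, points the other way: applied literally it gives a \emph{lower} bound on $1/T'_{\alpha}$, which is useless here. (Your worry that ``$T'_{\alpha}$ can be far larger than $T'_{\beta}T'_{\alpha-\beta}$'' is backwards: a large $T'_{\alpha}$ in the denominator only helps.) Lemma \ref{3} is genuinely needed elsewhere in the paper --- wherever a product $T_{\alpha+\beta}$ of full order appears in a \emph{numerator}, as in Lemma \ref{10} or Proposition \ref{100} --- but not in the Leibniz estimate for a product of functions. Since the correct monotonicity bound makes your displayed inequality true (with $2^{|\alpha|}$ in place of $4^{|\alpha|}$, in fact), the proof stands once this justification is repaired; only the emphasis of your final paragraph needs to be discarded.
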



\section{$\varepsilon$ tensor product of $\dot{\tilde{\mathcal{B}}}^{\{M_p\}}$ with a complete l.c.s.}

Let $E$ and $F$ be l.c.s. and
$\mathcal{L}_{c}(E,F)$  denote the space of continuous linear mappings from $E$ into $F$ with the topology of uniform convergence on convex circled compact
subsets of $E$. $E'_c$  denotes the dual of $E$ equipped with the topology of uniform convergence on convex circled compact subsets of $E$. As in  Komatsu \cite{Komatsu3} and Schwartz \cite{SchwartzV}, we define the $\varepsilon$ tensor product of $E$ and $F$, denoted by $E\varepsilon F$, as the space of all bilinear functionals on $E'_c \times F'_c$ which are hypocontinuous with respect to the equicontinuous subsets of $E'$ and $F'$. It is equipped with the topology of uniform convergence on products of equicontinuous subsets of $E'$ and $F'$. Moreover, the following isomorphisms hold:
\begin{equation}\label{1110}
E\varepsilon F \cong \mathcal{L}_{\epsilon}\left(E'_c, F\right)\cong \mathcal{L}_{\epsilon}\left(F'_c, E\right),
\end{equation}
where $\mathcal{L}_{\epsilon}\left(E'_c, F\right)$ is the space of all continuous linear mappings from $E'_c$ to $F$ equipped with the $\epsilon$ topology of uniform convergence on equicontinuous subsets of $E'$, similarly for $\mathcal{L}_{\epsilon}\left(F'_c, E\right)$. It is proved in \cite{SchwartzV}  that if both $E$ and $F$ are complete then $E\varepsilon F$ is complete. The tensor product $E\otimes F$ is injected in $E\varepsilon F$ under $(e\otimes f)(e',f')= \langle e,e'\rangle \langle f,f'\rangle$. The induced topology on $E\otimes F$ is the $\epsilon$ topology and we have the topological imbedding $E\otimes_{\epsilon} F\hookrightarrow E\varepsilon F$.\\
\indent  We recall the following definitions (c.f.  Komatsu \cite{Komatsu3} and Schwartz \cite{SchwartzV}).
The l.c.s. $E$ is said to have the sequential approximation property (resp. the weak sequential approximation property) if the identity mapping $\mathrm{Id}: E\longrightarrow E$ is in the sequential limit set (resp. the sequential closure) of $E'\otimes E$ in $\mathcal{L}_c(E,E)$.
The l.c.s. $E$ is said to have the weak approximation property if the identity mapping $\mathrm{Id}: E\longrightarrow E$ is in the closure of $E'\otimes E$ in $\mathcal{L}_c(E,E)$.
We also need the next proposition (\cite{Komatsu3}, proposition 1.4., p. 659).
\begin{proposition}
If $E$ and $F$ are complete l.c.s. and if either $E$ or $F$ has the weak approximation property then $E\varepsilon F$ is isomorphic to $E\hat{\otimes}_{\epsilon} F$.
\end{proposition}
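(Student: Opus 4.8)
The plan is to deduce the isomorphism from a density statement together with the completeness of $E\varepsilon F$. Recall from the discussion above that $E\otimes F$ sits inside $E\varepsilon F$ via $(e\otimes f)(e',f')=\langle e,e'\rangle\langle f,f'\rangle$, that the topology induced on $E\otimes F$ by $E\varepsilon F$ is exactly the $\epsilon$ topology, and that $E\varepsilon F$ is complete because $E$ and $F$ are. Hence it suffices to prove that $E\otimes F$ is dense in $E\varepsilon F$; then $E\varepsilon F$ is a (hence the) completion of $E\otimes_\epsilon F$, i.e. $E\varepsilon F\cong E\hat{\otimes}_\epsilon F$. By the symmetry of the two representations in \eqref{1110} we may assume that $E$ has the weak approximation property.

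Next I would work through the representation $E\varepsilon F\cong\mathcal{L}_\epsilon(F'_c,E)$: an element $u\in E\varepsilon F$ is a continuous linear map $v\colon F'_c\to E$. By the weak approximation property of $E$ there is a net $P_\alpha\in E'\otimes E$, say $P_\alpha=\sum_i e'_{i,\alpha}\otimes e_{i,\alpha}$, with $P_\alpha\to\mathrm{Id}_E$ in $\mathcal{L}_c(E,E)$. The composite $P_\alpha\circ v\colon F'_c\to E$ has finite dimensional range, and for each $e'\in E'$ the functional $e'\circ v$ is continuous on $F'_c$, hence lies in $(F'_c)'=F$; indeed the topology of $F'_c$ lies between $\sigma(F',F)$ and the Mackey topology $\tau(F',F)$ (convex circled compact subsets of $F$ are weakly compact), so it is compatible with the duality $\langle F,F'\rangle$. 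Consequently $P_\alpha\circ v$ corresponds, under the injection of $E\otimes F$ into $E\varepsilon F$, to $\sum_i e_{i,\alpha}\otimes(e'_{i,\alpha}\circ v)\in E\otimes F$.

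It then remains to check that $P_\alpha\circ v\to u$ in $E\varepsilon F$, i.e. uniformly on products $A'\times B'$ of equicontinuous sets $A'\subseteq E'$, $B'\subseteq F'$. The point is that on an equicontinuous set the topology of $F'_c$ coincides with $\sigma(F',F)$, so $B'$ is relatively compact in $F'_c$ by Alaoglu's theorem; therefore $v(\overline{B'})$ is compact in $E$, and since $E$ is complete (hence quasi-complete) its closed convex circled hull $K$ is a convex circled compact subset of $E$ containing $v(B')$. Writing $q_{A'}(\cdot)=\sup_{e'\in A'}|\langle\cdot,e'\rangle|$, which is a continuous seminorm on $E$ because $A'$ is equicontinuous, one gets $\sup_{f'\in B'}q_{A'}\big((P_\alpha\circ v-v)(f')\big)\le\sup_{e\in K}q_{A'}\big((P_\alpha-\mathrm{Id}_E)(e)\big)\to 0$ by the convergence of $P_\alpha$ in $\mathcal{L}_c(E,E)$. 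This proves the density, and hence the proposition. If instead $F$ has the weak approximation property, one argues symmetrically via $E\varepsilon F\cong\mathcal{L}_\epsilon(E'_c,F)$, composing a net converging to $\mathrm{Id}_F$ in $\mathcal{L}_c(F,F)$ with the map $E'_c\to F$ representing $u$.

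The main obstacle, and the only genuinely non-formal step, is the interplay between the several dual topologies: that $F'_c$ induces the weak$^\ast$ topology on equicontinuous sets (so that equicontinuous sets become relatively compact there), and that in a quasi-complete space the closed convex circled hull of a compact set is compact. Once these are secured, the convergence $P_\alpha\circ v\to u$ transfers mechanically from $\mathcal{L}_c(E,E)$ to $E\varepsilon F$, and completeness of $E\varepsilon F$ closes the argument.
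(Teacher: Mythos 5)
Your argument is correct. Note, however, that the paper itself offers no proof of this proposition: it is quoted verbatim from Komatsu (\emph{Ultradistributions III}, Proposition 1.4, p.~659), so there is nothing internal to compare against. Your proof is the standard one for this result: reduce to density of $E\otimes F$ in the complete space $E\varepsilon F$, represent $u$ as $v\in\mathcal{L}_{\epsilon}(F'_c,E)$, compose with a net $P_\alpha\to\mathrm{Id}_E$ from the weak approximation property, and transfer the convergence using that equicontinuous sets are relatively compact in $F'_c$ and that the closed convex circled hull of a compact set in the complete space $E$ is compact. All the delicate points you flag (Mackey--Arens to get $(F'_c)'=F$, coincidence of the weak$^*$ and compact-convergence topologies on equicontinuous sets, quasi-completeness for the hull) are exactly the right ones and are handled correctly, so your proof would serve as a self-contained substitute for the citation.
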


For $K\subset\subset \RR^d$, we denote by $\mathcal{C}_0(K)$ the $(B)$ - space of all continuous functions supported by $K$ endowed with
 $\|\cdot\|_{L^{\infty}}$ norm.

\begin{lemma}\label{110}
Let $K_1$ and $K_2$ be two compact subsets of $\RR^d$ such that $K_1 \subset\subset \mathrm{int} K_2$. Then there exists a sequence $S_n$ of $\left(\mathcal{C}_0(K_1)\right)'\otimes \mathcal{C}_0(K_2)$ such that $S_n\longrightarrow \mathrm{Id}$, when $n\longrightarrow \infty$, in $\mathcal{L}_c\left(\mathcal{C}_0(K_1),\mathcal{C}_0(K_2)\right)$.
\end{lemma}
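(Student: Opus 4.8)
The plan is to build the operators $S_n$ from continuous partitions of unity, so that they are literally averaging/interpolation operators of finite rank, and then to upgrade pointwise (in $f$) convergence to uniformity on compact sets via equicontinuity.

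First I would set $\rho=\mathrm{dist}(K_1,\RR^d\setminus\mathrm{int}\,K_2)>0$ and fix a sequence $r_n\downarrow 0$ with $0<r_n<\rho/2$. For each $n$ I take a locally finite cover of $\RR^d$ by open balls $B(x_{n,i},r_n)$, $i\in\NN$, together with a subordinate continuous partition of unity $\{\psi_{n,i}\}_i$: $\psi_{n,i}$ continuous, $0\le\psi_{n,i}\le 1$, $\mathrm{supp}\,\psi_{n,i}\subseteq B(x_{n,i},r_n)$, and $\sum_i\psi_{n,i}\equiv 1$ on $\RR^d$ (such covers and partitions of unity exist by an elementary construction, e.g.\ balls centred at a sufficiently fine lattice). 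Put $I_n=\{i\in\NN:\ B(x_{n,i},r_n)\cap K_1\neq\emptyset\}$; this set is finite because $K_1$ is compact and the cover is locally finite, and for $i\in I_n$ one has $\mathrm{supp}\,\psi_{n,i}\subseteq\{x:\mathrm{dist}(x,K_1)<2r_n\}\subseteq\mathrm{int}\,K_2$ since $2r_n<\rho$, so $\psi_{n,i}\in\mathcal{C}_0(K_2)$. I then define
$$S_n=\sum_{i\in I_n}\delta_{x_{n,i}}\otimes\psi_{n,i}\in\bigl(\mathcal{C}_0(K_1)\bigr)'\otimes\mathcal{C}_0(K_2),\qquad S_nf=\sum_{i\in I_n}f(x_{n,i})\,\psi_{n,i},$$
where $\delta_{x}$ is the (continuous) evaluation functional $f\mapsto f(x)$; note $\mathrm{Id}$ here is the isometric inclusion $\mathcal{C}_0(K_1)\hookrightarrow\mathcal{C}_0(K_2)$.

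Next I would estimate $\|S_nf-f\|_{L^\infty}$ in terms of the modulus of continuity $\omega_f(r)=\sup_{|x-y|\le r}|f(x)-f(y)|$. For $x\in K_1$, every $i$ with $\psi_{n,i}(x)\neq 0$ lies in $I_n$, so $\sum_{i\in I_n}\psi_{n,i}(x)=1$ and $S_nf(x)-f(x)=\sum_{i\in I_n}(f(x_{n,i})-f(x))\psi_{n,i}(x)$, each nonzero term having $|x-x_{n,i}|<r_n$; hence $|S_nf(x)-f(x)|\le\omega_f(r_n)$. For $x\notin K_1$ we have $f(x)=0$, and for each $i\in I_n$ with $\psi_{n,i}(x)\neq 0$: either $x_{n,i}\notin K_1$, whence $f(x_{n,i})=0$, or $x_{n,i}\in K_1$, in which case the segment $[x_{n,i},x]$ meets $\partial K_1$ at some $z$ with $|x_{n,i}-z|<r_n$ and $f(z)=0$ (as $f$ vanishes off $K_1$ and is continuous), whence $|f(x_{n,i})|\le\omega_f(r_n)$; thus $|S_nf(x)|\le\omega_f(r_n)\sum_i\psi_{n,i}(x)\le\omega_f(r_n)$. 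Altogether $\|S_nf-f\|_{L^\infty}\le\omega_f(r_n)$ for every $f\in\mathcal{C}_0(K_1)$.

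Finally, to get convergence in $\mathcal{L}_c(\mathcal{C}_0(K_1),\mathcal{C}_0(K_2))$, let $B$ be a convex circled compact subset of $\mathcal{C}_0(K_1)$. By the Arzel\`a--Ascoli theorem $B$ is equicontinuous, and since every element of $B$ is supported in the one fixed compact $K_1$ this equicontinuity is uniform, i.e.\ $\sup_{f\in B}\omega_f(r)\to 0$ as $r\to 0$. Combined with the estimate above this yields $\sup_{f\in B}\|S_nf-f\|_{L^\infty}\le\sup_{f\in B}\omega_f(r_n)\to 0$, which is precisely $S_n\to\mathrm{Id}$ in $\mathcal{L}_c$. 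The only step needing care is this last passage from a $f$-by-$f$ bound to uniformity on compact sets (it is exactly where the Arzel\`a--Ascoli equicontinuity is used); the rest is bookkeeping with the partition of unity, with the hypothesis $K_1\subset\subset\mathrm{int}\,K_2$ serving only to guarantee that the supports of the $\psi_{n,i}$, $i\in I_n$, stay inside $K_2$.
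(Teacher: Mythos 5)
Your proposal is correct and follows essentially the same route as the paper: a finite-rank operator $S_n=\sum_i\delta_{x_{n,i}}\otimes\psi_{n,i}$ built from a continuous partition of unity at scale $r_n$ (with the hypothesis $K_1\subset\subset\mathrm{int}\,K_2$ used only to keep the supports of the $\psi_{n,i}$ inside $K_2$), followed by Arzel\`a--Ascoli equicontinuity of the compact set $B$ to make the bound $\|S_nf-f\|_{L^\infty}\le\omega_f(r_n)$ uniform over $f\in B$. Your treatment of the points $x\notin K_1$ (via a boundary point on the segment $[x_{n,i},x]$) is in fact slightly more careful than the paper's, which only states equicontinuity for $x,y\in K_1$; otherwise the two arguments coincide.
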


\begin{proof} For every $n\in \ZZ_+$, choose a finite open covering $\left\{U_{1,n},...,U_{k_n,n}\right\}$ of $K_1$ of open sets
each with diameter less than $1/n$ such that $\bar{U}_{j,n}\subseteq \mathrm{int}K_2$, $j=1,..., k_n$. Let $\chi_{j,n}$, $j=1,...,k_n$, be a
continuous partition of unity subordinated to $\left\{U_{1,n},...,U_{k_n,n}\right\}$. For every $j\in\{1,...,k_n\}$, choose a point
$x_{j,n}\in \mathrm{supp\,} \chi_{j,n}\cap K_1$. Define
$\ds S_n=\sum_{j=1}^{k_n}\delta\left(\cdot-x_{j,n}\right)\otimes
\chi_{j,n}\in \left(\mathcal{C}_0(K_1)\right)'\otimes \mathcal{C}_0(K_2).$
Let $V=\left\{\varphi\in \mathcal{C}_0(K_2)|\|\varphi\|_{L^{\infty}}\leq \varepsilon\right\}$ and $B$ a compact convex circled subset of $\mathcal{C}_0(K_1)$.
 Let $\mathcal{M}(B,V)=\left\{T\in \mathcal{L}\left(\mathcal{C}_0(K_1), \mathcal{C}_0(K_2)\right)|T(B)\subseteq V\right\}$.
  By the Arzela - Ascoli theorem, for the chosen $\varepsilon$ there exists $\eta>0$ such that for all $x,y\in K_1$ such that $|x-y|<\eta$, $|\varphi(x)-\varphi(y)|\leq \varepsilon$ for all $\varphi\in B$. Let $n_0\in\NN$ is so large such that $1/n_0< \eta$. Then, for $n\geq n_0$ and $x\in K_1$, we have
\begin{eqnarray*}
\left|S_n(\varphi)(x)-\varphi(x)\right|&=&\left|\sum_{j=1}^{k_n}\varphi\left(x_{j,n}\right)\chi_{j,n}(x)- \sum_{j=1}^{k_n}\varphi(x)\chi_{j,n}(x)\right|\\
&\leq& \sum_{j=1}^{k_n}\left|\varphi\left(x_{j,n}\right)-\varphi(x)\right|\chi_{j,n}(x)\leq \varepsilon,
\end{eqnarray*}
for all $\varphi\in B$. Note that, for $x\in K_2\backslash K_1$, $\varphi(x)=0$ and $\ds \left|S_n(\varphi)(x)\right|\leq \sum_{j=1}^{k_n}\left|\varphi\left(x_{j,n}\right)\right|\chi_{j,n}(x)\leq \varepsilon$. So, $S_n-\mathrm{Id}\in\mathcal{M}(B,V)$ for $n\geq n_0$.\qed
\end{proof}

\begin{lemma}\label{120}
$B$ is a precompact subset of $\dot{\tilde{\mathcal{B}}}^{\{M_p\}}\left(\RR^d\right)$ if and only if $B$ is bounded in $\dot{\tilde{\mathcal{B}}}^{\{M_p\}}\left(\RR^d\right)$ and for every $\varepsilon>0$ and $(t_j)\in\mathfrak{R}$, there exists $K\subset\subset\RR^d$ such that
$$
\ds\sup_{\varphi\in B}\sup_{\substack{\alpha\in \NN^d\\ x\in\RR^d\backslash K}}\frac{\left|D^{\alpha} \varphi(x)\right|}{T_{\alpha} M_{\alpha}}\leq \varepsilon.
$$
\end{lemma}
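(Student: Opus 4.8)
The plan is to treat the two implications separately. The forward one is immediate from Lemma \ref{5}; the converse is the substance, and I would prove it by producing, for every seminorm $\|\cdot\|_{(t_j)}$ and every $\varepsilon>0$, an explicit finite $\varepsilon$-net for $B$ — that is, by verifying total boundedness directly against each seminorm, since $\dot{\tilde{\mathcal{B}}}^{\{M_p\}}$ need not be metrizable. In this sense the lemma is the ``uniform over $B$'' counterpart of Lemma \ref{5}.

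For the forward direction: a precompact subset of an l.c.s. is bounded, so only the tail estimate needs checking. Fix $\varepsilon>0$ and $(t_j)\in\mathfrak{R}$ and cover $B$ by finitely many balls $\varphi_i+(\varepsilon/2)\{\psi:\|\psi\|_{(t_j)}\le 1\}$, $i=1,\dots,N$, with $\varphi_i\in\dot{\tilde{\mathcal{B}}}^{\{M_p\}}$. By Lemma \ref{5} each $\varphi_i$ admits a compact $K_i$ with $\sup_{\alpha}\sup_{x\in\RR^d\backslash K_i}|D^{\alpha}\varphi_i(x)|/(T_{\alpha}M_{\alpha})<\varepsilon/2$, and then $K:=K_1\cup\dots\cup K_N$ together with the triangle inequality yields $\sup_{\varphi\in B}\sup_{\alpha,\,x\notin K}|D^{\alpha}\varphi(x)|/(T_{\alpha}M_{\alpha})\le\varepsilon$, as wanted.

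For the converse, fix $\varepsilon>0$ and $(t_j)\in\mathfrak{R}$ and split the density $|D^{\alpha}(\varphi-\psi)(x)|/(T_{\alpha}M_{\alpha})$ according to three regimes. (i) \emph{High order.} Since $(t_j/2)\in\mathfrak{R}$ and $B$ is bounded, $C:=\sup_{\varphi\in B}\|\varphi\|_{(t_j/2)}<\infty$; because $\prod_{j=1}^{|\alpha|}(t_j/2)=2^{-|\alpha|}T_{\alpha}$, this gives $|D^{\alpha}\varphi(x)|/(T_{\alpha}M_{\alpha})\le C2^{-|\alpha|}$ for all $\varphi\in B$, all $x$, all $\alpha$, so fixing $m$ with $C2^{-m}\le\varepsilon/4$ disposes of all $|\alpha|\ge m$ uniformly. (ii) \emph{Far away.} Applying the hypothesis with $\varepsilon/4$ in place of $\varepsilon$ yields a compact $K$ handling all $x\notin K$ uniformly over $\varphi\in B$ and all $\alpha$. (iii) \emph{Low order on $K$.} Here only finitely many $\alpha$ occur, and boundedness of $B$ in $\|\cdot\|_{(t_j)}$ bounds $D^{\alpha}\varphi$ and, through the first derivatives $D_jD^{\alpha}\varphi$, the Lipschitz constant of $D^{\alpha}\varphi$ on $\RR^d$, both uniformly in $\varphi\in B$; hence $\{D^{\alpha}\varphi|_K:\varphi\in B\}$ is relatively compact in the Banach space $\mathcal{C}(K)$ of continuous functions on $K$ (sup norm) by the Arzel\`a--Ascoli theorem. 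Consequently the map $\varphi\mapsto(D^{\alpha}\varphi|_K)_{|\alpha|<m}$ has relatively compact, hence totally bounded, image in the finite (so metrizable) product of copies of $\mathcal{C}(K)$; choosing a finite $(\varepsilon c/4)$-net $\{(D^{\alpha}\varphi_i|_K)_{|\alpha|<m}\}_{i=1}^N$ of that image with $\varphi_1,\dots,\varphi_N\in B$, where $c:=\min_{|\alpha|<m}T_{\alpha}M_{\alpha}>0$, and combining (i)--(iii) shows that every $\varphi\in B$ satisfies $\|\varphi-\varphi_i\|_{(t_j)}\le\varepsilon/2\le\varepsilon$ for some $i$. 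Since $(t_j)$ and $\varepsilon$ were arbitrary, $B$ is totally bounded.

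I expect the only genuinely delicate point to be step (i): the uncountable projective limit defining $\dot{\tilde{\mathcal{B}}}^{\{M_p\}}$ obstructs any naive attempt to run Arzel\`a--Ascoli ``in all derivatives simultaneously'', and the device that rescues the argument is that boundedness of $B$ in the strictly finer weight $(t_j/2)$ forces the geometric decay $C2^{-|\alpha|}$ of the $\|\cdot\|_{(t_j)}$-densities, which reduces matters to finitely many derivative orders where the classical compactness criterion applies. Everything else is routine bookkeeping with the constants $\varepsilon/4$ and $c$.
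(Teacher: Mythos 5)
Your proof is correct, and the forward direction coincides with the paper's (finite $\varepsilon/2$-net from precompactness, Lemma \ref{5} applied to each net point, union of the compacts). For the converse, however, you take a genuinely different route. The paper disposes of all derivative orders on the compact set $K$ in one stroke: $B$ is bounded in $\EE^{\{M_p\}}\left(\RR^d\right)$, which is a Montel space, hence $B$ is precompact there, and precompactness for the single continuous seminorm $p_{K,(t_j)}$ already yields a finite net $\varphi_1,\dots,\varphi_n\in B$ with $p_{K,(t_j)}(\varphi-\varphi_j)\leq\varepsilon$; combining this with the tail hypothesis off $K$ gives the net for $\|\cdot\|_{(t_j)}$. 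You instead avoid invoking the Montel property altogether: your weight-halving observation that boundedness in $\|\cdot\|_{(t_j/2)}$ forces the decay $C2^{-|\alpha|}$ of the $\|\cdot\|_{(t_j)}$-densities reduces matters to finitely many $\alpha$, where classical Arzel\`a--Ascoli in $\mathcal{C}(K)$ applies. What each buys: the paper's argument is shorter but leans on the nontrivial structural fact that $\EE^{\{M_p\}}$ is Montel (your route in effect re-proves the relevant compactness by hand, and the decay estimate makes explicit \emph{why} boundedness in the full Roumieu family controls all derivative orders at once); your argument is more elementary and self-contained at the cost of the three-regime bookkeeping. One small point common to both proofs and worth a sentence: precompactness is checked against a single seminorm $\|\cdot\|_{(t_j)}$ at a time, which suffices because the family $\left\{\|\cdot\|_{(t_j)}\right\}_{(t_j)\in\mathfrak{R}}$ is directed (pass to $\min(t_j,s_j)\in\mathfrak{R}$), so these balls form a neighborhood base at $0$.
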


\begin{proof} $\Rightarrow$. Let $\varepsilon>0$ and $(t_j)\in\mathfrak{R}$ and $V=\left\{\varphi\in\dot{\tilde{\mathcal{B}}}^{\{M_p\}}|\|\varphi\|_{(t_j)}\leq\varepsilon/2\right\}$.
There
exist $\varphi_1,...,\varphi_n\in B$ such that for each $\varphi\in B$ there exists $j\in\{1,...,n\}$ such that $\varphi\in \varphi_j+V$.
Let $K\subset\subset\RR^d$ such that $\ds\left|D^{\alpha}\varphi_j(x)\right|/(T_{\alpha}M_{\alpha})\leq \frac{\varepsilon}{2}$
for all $x\in \RR^d\backslash K$, $\alpha\in\NN^d$, $j\in\{1,...,n\}$. Let $\varphi\in B$. There exists $j\in\{1,...,n\}$ such that
$\|\varphi-\varphi_j\|_{(t_j)}\leq\varepsilon/2$. The proof follows from
\begin{eqnarray*}
\frac{\left|D^{\alpha}\varphi(x)\right|}{T_{\alpha}M_{\alpha}}\leq \frac{\left|D^{\alpha}
\left(\varphi(x)-\varphi_j(x)\right)\right|}{T_{\alpha}M_{\alpha}}+ \frac{\left|D^{\alpha}\varphi_j(x)\right|}{T_{\alpha}M_{\alpha}}
\leq \frac{\varepsilon}{2}+\frac{\varepsilon}{2}=\varepsilon,\, x\in\RR^d\backslash K, \alpha \in\NN^d.
\end{eqnarray*}

$\Leftarrow$.
Let $V=\left\{\varphi\in \dot{\tilde{\mathcal{B}}}^{\{M_p\}}\left(\RR^d\right)|\|\varphi\|_{(t_j)}
\leq \varepsilon\right\}$. Since $B$ is bounded  in the Montel space $\EE^{\{M_p\}}\left(\RR^d\right)$, it is
precompact in $\EE^{\{M_p\}}\left(\RR^d\right)$.
Thus, there exists a finite subset $B_0=\{\varphi_1,...,\varphi_n\}$ of $B$ such that, for every $\varphi\in B$, there exists $j\in\{1,...,n\}$ such that $p_{K,(t_j)}(\varphi-\varphi_j)\leq \varepsilon$. If $\varphi\in B$ is fixed, take such $\varphi_j\in B_0$. Then, $\ds \frac{\left|D^{\alpha}\varphi(x)-D^{\alpha}
 \varphi_j(x)\right|}{T_{\alpha}M_{\alpha}}\leq \varepsilon$, for all $x\in K, \alpha\in\NN^d$. Also, by the assumption,
\begin{eqnarray*}
\frac{\left|D^{\alpha}\varphi(x)-D^{\alpha}\varphi_j(x)\right|}{T_{\alpha}M_{\alpha}}\leq \frac{\left|D^{\alpha}\varphi(x)\right|}{T_{\alpha}M_{\alpha}}+ \frac{\left|D^{\alpha}\varphi_j(x)\right|}{T_{\alpha}M_{\alpha}}\leq \frac{\varepsilon}{2}+\frac{\varepsilon}{2}=\varepsilon,
\end{eqnarray*}
for all $x\in\RR^d\backslash K,\, \alpha\in \NN^d$. So, the proof follows.\qed
\end{proof}

\begin{proposition}\label{pr11}
$\dot{\tilde{\mathcal{B}}}^{\{M_p\}}\left(\RR^d\right)$ has the weak sequential approximation property.
\end{proposition}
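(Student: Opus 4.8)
My plan is to produce an explicit sequence $(T_l)_{l\in\NN}$ in $\bigl(\dot{\tilde{\mathcal{B}}}^{\{M_p\}}\bigr)'\otimes\dot{\tilde{\mathcal{B}}}^{\{M_p\}}$ with $T_l\to\mathrm{Id}$ in $\mathcal{L}_c\bigl(\dot{\tilde{\mathcal{B}}}^{\{M_p\}},\dot{\tilde{\mathcal{B}}}^{\{M_p\}}\bigr)$, built from three ingredients: a translation invariant $\{M_p\}$ partition of unity (to move the whole question onto one fixed compact set), the approximation property of the nuclear space $\DD^{\{M_p\}}_Q$ on that set, and the cut-off convergence already present in the proof of Lemma \ref{5}, made uniform on precompact sets by Lemma \ref{120}.

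Concretely I would fix $\phi\in\DD^{\{M_p\}}$ with $\supp\phi\subseteq Q:=K_{\RR^d}(0,1)$ and $\sum_{\nu\in\ZZ^d}\phi(\cdot-\nu)\equiv1$, and set $\phi_\nu=\phi(\cdot-\nu)$, $Q_\nu=\nu+Q$. By Lemma \ref{7} each $\varphi\mapsto\phi_\nu\varphi$ is continuous on $\dot{\tilde{\mathcal{B}}}^{\{M_p\}}$; since the image is supported by $Q_\nu$ and on such functions $\|\cdot\|_{(t_j)}$ coincides with $p_{Q_\nu,(t_j)}$, it is continuous $\dot{\tilde{\mathcal{B}}}^{\{M_p\}}\to\DD^{\{M_p\}}_{Q_\nu}$. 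The space $\DD^{\{M_p\}}_Q$ is nuclear (a consequence of $(M.2)$, cf. Komatsu) and separable, and has the bounded approximation property, so one may choose an equicontinuous sequence $(\pi_l)_l$ of finite rank operators $\pi_l=\sum_i\langle\cdot,e'_{l,i}\rangle e_{l,i}$ on $\DD^{\{M_p\}}_Q$ ($e'_{l,i}\in(\DD^{\{M_p\}}_Q)'$, $e_{l,i}\in\DD^{\{M_p\}}_Q$) with $\pi_l\to\mathrm{Id}$ in $\mathcal{L}_c(\DD^{\{M_p\}}_Q,\DD^{\{M_p\}}_Q)$. Transporting by the shift $\tau_\nu$ I put $\pi_l^\nu:=\tau_\nu\pi_l\tau_{-\nu}$ on $\DD^{\{M_p\}}_{Q_\nu}$ and define $T_l\varphi:=\sum_{|\nu|\le l}\pi_l^\nu(\phi_\nu\varphi)$. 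Each $T_l$ is a finite sum of rank one maps $\varphi\mapsto\langle\phi_\nu\varphi,\tau_\nu e'_{l,i}\rangle\,\tau_\nu e_{l,i}$ with continuous coefficient functional on $\dot{\tilde{\mathcal{B}}}^{\{M_p\}}$ and range in $\DD^{\{M_p\}}_{Q_\nu}\subseteq\dot{\tilde{\mathcal{B}}}^{\{M_p\}}$, so $T_l\in\bigl(\dot{\tilde{\mathcal{B}}}^{\{M_p\}}\bigr)'\otimes\dot{\tilde{\mathcal{B}}}^{\{M_p\}}$.

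To check $T_l\to\mathrm{Id}$ I would write, with $\eta_l:=1-\sum_{|\nu|\le l}\phi_\nu=\sum_{|\nu|>l}\phi_\nu$,
$$
T_l\varphi-\varphi=\sum_{|\nu|\le l}(\pi_l^\nu-\mathrm{Id})(\phi_\nu\varphi)\;-\;\eta_l\varphi .
$$
The $\eta_l$ are bounded in $\tilde{\DD}^{\{M_p\}}_{L^\infty}$ (bounded overlap of the $\supp\phi_\nu$) and $\eta_l\equiv0$ on $K_{\RR^d}(0,l-1)$, so the computation in the proof of Lemma \ref{5}, fed with the uniform decay of a precompact set furnished by Lemma \ref{120}, gives $\sup_{\varphi\in B}\|\eta_l\varphi\|_{(t_j)}\to0$ for every precompact $B$ and every $(t_j)\in\mathfrak{R}$. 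For the sum, since each point of $\RR^d$ lies in at most a fixed number $C_d$ of the $Q_\nu$, one has $\bigl\|\sum_{|\nu|\le l}(\pi_l^\nu-\mathrm{Id})(\phi_\nu\varphi)\bigr\|_{(t_j)}\le C_d\sup_{|\nu|\le l}\|(\pi_l^\nu-\mathrm{Id})(\phi_\nu\varphi)\|_{(t_j)}$, and by translation invariance of $\|\cdot\|_{(t_j)}$ this is $C_d\sup_{|\nu|\le l}p_{Q,(t_j)}\bigl((\pi_l-\mathrm{Id})(\phi\cdot\tau_{-\nu}\varphi)\bigr)$. The set $\{\phi\cdot\tau_{-\nu}\varphi:\nu\in\ZZ^d,\ \varphi\in B\}$ is bounded in $\dot{\tilde{\mathcal{B}}}^{\{M_p\}}$ (translation invariance of the seminorms plus Lemma \ref{7}) and supported by $Q$, hence bounded, thus precompact, in $\DD^{\{M_p\}}_Q$; since $\pi_l\to\mathrm{Id}$ uniformly on precompact subsets of $\DD^{\{M_p\}}_Q$, the supremum tends to $0$. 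Combining, $\sup_{\varphi\in B}\|T_l\varphi-\varphi\|_{(t_j)}\to0$ for every precompact $B$ and every $(t_j)$, i.e. $T_l\to\mathrm{Id}$ in $\mathcal{L}_c$, which is the weak sequential approximation property.

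The delicate point — and where the Roumieu case is genuinely harder than the Beurling one — is the word \emph{sequential}: the topology of $\dot{\tilde{\mathcal{B}}}^{\{M_p\}}$ is defined by the uncountable family $\{\|\cdot\|_{(t_j)}\}_{(t_j)\in\mathfrak{R}}$, so the naive scheme (cut $\varphi$ off to land in some $\DD^{\{M_p\}}_K$ and approximate there) does not yield a sequence, because the Banach step $\DD^{\{M_p\},h}_K$ carrying a prescribed precompact set has an uncontrolled index $h$ and no diagonalisation over $\mathfrak{R}$ is possible. The translation invariant partition of unity localises the infinite dimensional part to the \emph{single} space $\DD^{\{M_p\}}_Q$, and there the \emph{bounded} approximation property is what makes it work: equicontinuity of the $\pi_l$ forces the orbits $\bigcup_l\pi_l(D)$ into one Banach step $\DD^{\{M_p\},h}_Q$, on which convergence is measured by a single norm, and the estimates $p_{Q,(t_j)}\le C_{(t_j),h}\,p_{Q,h}$ then carry it to every $(t_j)$ with no loss. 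Two standard facts (Komatsu type diagonal arguments) should also be recorded: a compactly supported element of $\tilde{\DD}^{\{M_p\}}_{L^\infty}$ already lies in $\DD^{\{M_p\}}$, and a bounded subset of $\DD^{\{M_p\}}_K$ is bounded in one of the $\DD^{\{M_p\},h}_K$ — the first makes the multiplication operators continuous into $\DD^{\{M_p\}}_{Q_\nu}$, the second is used in the precompactness step.
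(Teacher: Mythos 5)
Your reduction to a single compact set via a translation--invariant partition of unity is clean, and the bookkeeping with the bounded overlap of the $Q_\nu$ and the uniform decay supplied by Lemma \ref{120} is fine. But the argument hinges on one assertion that you state without proof and that does not follow from what you invoke: that $\DD^{\{M_p\}}_Q$ admits an \emph{equicontinuous sequence} $(\pi_l)$ of finite-rank operators with $\pi_l\to\mathrm{Id}$ in $\mathcal{L}_c\left(\DD^{\{M_p\}}_Q,\DD^{\{M_p\}}_Q\right)$, i.e.\ the bounded approximation property. Nuclearity of $\DD^{\{M_p\}}_Q$ gives only the approximation property --- $\mathrm{Id}$ lies in the \emph{closure} of the finite-rank operators for the non-metrizable topology of $\mathcal{L}_c$ --- and neither nuclearity nor separability upgrades this to a convergent sequence or to an equicontinuous family: there exist nuclear Fr\'echet spaces without the bounded approximation property, so ``nuclear and separable, hence BAP'' is not a valid inference. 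For the particular spaces $\DD^{\{M_p\}}_K$ the existence of bases or of the BAP is a delicate question depending on $M_p$ and on $K$, and nothing in $(M.1)$--$(M.3)$ hands it to you. Since this is exactly the step that produces the \emph{sequence}, the proof as written has a genuine gap; note also that if it did go through you would obtain the sequential approximation property (a single sequence converging to $\mathrm{Id}$), strictly stronger than what the proposition claims --- a sign that something must be paid for somewhere.

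The paper pays for it by never asking for finite-rank approximation at the ultradifferentiable level. It first approximates $\mathrm{Id}$ on $\mathcal{C}_0(K)$ by the elementary operators $\sum_j\delta\left(\cdot-x_{j,n}\right)\otimes\chi_{j,n}$ built from a continuous partition of unity (Lemma \ref{110}), then regularizes by convolution with a delta sequence $\mu_m$ so that the resulting finite-rank operators $\sum_l\theta_n\delta\left(\cdot-x_{l,k,n}\right)\otimes(\mu_m*\chi_{l,k,n})$ land in $\dot{\tilde{\mathcal{B}}}^{\{M_p\}}$, and finally removes the cut-off $\theta_n$. The price is a triple iterated limit $T_{k,m,n}\to T_{m,n}\to T_n\to\mathrm{Id}$, which places $\mathrm{Id}$ only in the sequential \emph{closure} of $\left(\dot{\tilde{\mathcal{B}}}^{\{M_p\}}\right)'\otimes\dot{\tilde{\mathcal{B}}}^{\{M_p\}}$ --- precisely the weak sequential approximation property and no more. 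To salvage your scheme you would have to either prove the bounded approximation property of $\DD^{\{M_p\}}_Q$ (a separate, nontrivial result) or accept iterated sequential limits as the paper does. A small separate point: translates of the closed unit ball by $\ZZ^d$ do not cover $\RR^d$ for $d\geq 5$, so take $Q$ to be a cube or a suitably larger ball.
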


\begin{proof} Let $K_n=K_{\RR^d}(0,2^{n-1})$, $n\geq 1$. Let $\theta\in \DD^{\{M_p\}}_{K_1}$
is such that $\theta=1$ on $K_{\RR^d}(0,1/2)$. Define $\theta_n(x)=\theta(x/2^n)$, $n\in\ZZ_+$. Then $\theta_n\in\DD^{\{M_p\}}_{K_{n+1}}$
and $\theta_n=1$ on $K_n$. Let $T_n\in \mathcal{L}\left(\dot{\tilde{\mathcal{B}}}^{\{M_p\}}\left(\RR^d\right),
\dot{\tilde{\mathcal{B}}}^{\{M_p\}}\left(\RR^d\right)\right)$, defined by $T_n(\varphi)=\theta_n\varphi$. Let $\mu\in \DD^{\{M_p\}}_{K_1}$, $\mu\geq 0$,
is such that $\ds\int_{\RR^d}\mu(x)dx=1$ and define a delta sequence $\mu_m=m^d\mu(m\cdot)$, $m\in\ZZ_+$.  For each fixed $n\in \ZZ_+$,
by lemma \ref{110}, we  find
$$S_{k,n}=\sum_{l=1}^{j_{k,n}}\delta\left(\cdot-x_{l,k,n}\right)\otimes \chi_{l,k,n}\in
\left(\mathcal{C}_0(K_{n+1})\right)'\otimes \mathcal{C}_0(K_{n+2})$$ such that $S_{k,n}\longrightarrow \mathrm{Id}$,
when $k\longrightarrow \infty$, in $\mathcal{L}_c\left(\mathcal{C}_0(K_{n+1}),\mathcal{C}_0(K_{n+2})\right)$, where $\chi_{l,k,n}$
are continuous function with values in $[0,1]$ that have compact support in $\mathrm{int} K_{n+2}$ and $x_{l,k,n}$ are
points in $\mathrm{supp\,} \chi_{l,k,n}\cap K_{n+1}$. Moreover the support of $\chi_{l,k,n}$ has diameter less then $1/k$
and $\ds \sum_{l=1}^{j_{k,n}}\chi_{l,k,n}(x)\leq 1$ on $K_{n+2}$ and $\ds \sum_{l=1}^{j_{k,n}}\chi_{l,k,n}(x)=1$ on $K_{n+1}$.
 Define, $k,m,n \in\ZZ_+$,
$$
T_{k,m,n}= \sum_{l=1}^{j_{k,n}}\theta_n\delta\left(\cdot-x_{l,k,n}\right)\otimes(\mu_m*\chi_{l,k,n})
\mbox{ and }
T_{m,n}: \varphi\mapsto T_{m,n}(\phi) = \mu_m*(\theta_n\varphi).$$
First we prove that for each fixed $m,n\in\ZZ_+$, $T_{k,m,n}\rightarrow T_{m,n}$,
when $k\rightarrow \infty$, in\newline
$\mathcal{L}_c\left(\dot{\tilde{\mathcal{B}}}^{\{M_p\}}\left(\RR^d\right), \dot{\tilde{\mathcal{B}}}^{\{M_p\}}\left(\RR^d\right)\right)$.
Let $V=\left\{\varphi\in\dot{\tilde{\mathcal{B}}}^{\{M_p\}}\left(\RR^d\right) \big|\|\varphi\|_{(t_j)}\leq\varepsilon\right\}$, $B$ a convex circled compact subset of $\dot{\tilde{\mathcal{B}}}^{\{M_p\}}\left(\RR^d\right)$
 and
$$
\mathcal{M}(B,V)=\left\{T\in\mathcal{L}\left(\dot{\tilde{\mathcal{B}}}^{\{M_p\}}\left(\RR^d\right), \dot{\tilde{\mathcal{B}}}^{\{M_p\}}\left(\RR^d\right)\right)\Big|T(B)\subseteq V\right\}
$$
(a neighborhood of zero in $\mathcal{L}_c\left(\dot{\tilde{\mathcal{B}}}^{\{M_p\}}\left(\RR^d\right), \dot{\tilde{\mathcal{B}}}^{\{M_p\}}\left(\RR^d\right)\right)$).
Let $\varphi\in B$. Then\\
$\ds \frac{\left|D^{\alpha}T_{k,m,n}(\varphi)(x)-D^{\alpha}T_{m,n}(\varphi)(x)\right|}{T_{\alpha}M_{\alpha}}$ ($\alpha\in\NN^d,x\in\RR^d$)
\begin{eqnarray*}
&=&\frac{1}{T_{\alpha}M_{\alpha}}\left|\left(D^{\alpha}\mu_m\right)* \left(\sum_{l=1}^{j_{k,n}}\theta_n\left(x_{l,k,n}\right) \varphi\left(x_{l,k,n}\right)\chi_{l,k,n}-\theta_n\varphi\right)(x)\right|\\
&\leq& m^d\|\mu\|_{(t_j/m)}\int_{K_{n+2}}\sum_{l=1}^{j_{k,n}} \left|\theta_n\left(x_{l,k,n}\right)\varphi\left(x_{l,k,n}\right)-\theta_n(y)\varphi(y)\right|\chi_{l,k,n}(y)dy.
\end{eqnarray*}
Because the mapping $\varphi\mapsto \theta_n\varphi$, $\dot{\tilde{\mathcal{B}}}^{\{M_p\}}\left(\RR^d\right)\longrightarrow \mathcal{C}_0(K_{n+1})$ is continuous, it maps the compact set $B$ in a compact set in $\mathcal{C}_0(K_{n+1})$, which we denote by $B_1$. By the Arzela - Ascoli theorem, for the chosen $\varepsilon$ there exists $\eta>0$ such that for all $x,y\in K_{n+1}$ such that
$$|x-y|<\eta \Rightarrow \ds|\theta_n(x)\varphi(x)-\theta_n(y)\varphi(y)|\leq \frac{\varepsilon}{m^d\|\mu\|_{(t_j/m)}|K_{n+2}|}, \varphi\in B.$$ If we take $k_0$ large enough such that $1/k_0<\eta$, then, for all $k\geq k_0$,
\begin{eqnarray*}
\frac{\left|D^{\alpha}T_{k,m,n}(\varphi)(x)-D^{\alpha}T_{m,n}(\varphi)(x)\right|}{T_{\alpha}M_{\alpha}}\leq \varepsilon, \, x\in \RR^d, \alpha\in\NN^d, \varphi\in B.
\end{eqnarray*}
That is $T_{k,m,n}- T_{m,n}\in \mathcal{M}(B,V)$ for all $k\geq k_0$.
Now we  prove that, for each fixed $n\in\ZZ_+$, $T_{m,n}\longrightarrow T_n$, when $m\longrightarrow \infty$, in $\mathcal{L}_c\left(\dot{\tilde{\mathcal{B}}}^{\{M_p\}}\left(\RR^d\right), \dot{\tilde{\mathcal{B}}}^{\{M_p\}}\left(\RR^d\right)\right)$. We use the notation as above. Because of lemma \ref{3}, without losing generality,
we can assume that $(t_j)$ is such that $T_{p+q}\leq 2^{p+q}T_p T_q$, for all $p,q\in\NN$. Then, for $\varphi\in B, \alpha\in\NN^d,x\in\RR^d,$
\begin{eqnarray*}
\frac{\left|D^{\alpha}T_{m,n}(\varphi)(x)-D^{\alpha}T_n(\varphi)(x)\right|}{T_{\alpha}M_{\alpha}}\leq \int_{\RR^d} \mu_m(y)\frac{\left|D^{\alpha}(\theta_n\varphi)(x-y)-D^{\alpha}(\theta_n\varphi)(x)\right|}{T_{\alpha}M_{\alpha}}dy.
\end{eqnarray*}
Let $t'_1=t_1/(4H)$ and $t'_p=t_{p-1}/(2H)$, for $p\in\NN$, $p\geq 2$. Then $(t'_j)\in\mathfrak{R}$. For the moment, denote $\theta_n\varphi$ by $\varphi_n$. By the mean value theorem, we have
\begin{eqnarray*}
\left|D^{\alpha}\varphi_n(x-y)-D^{\alpha}\varphi_n(x)\right|&\leq&\sqrt{d}\|\varphi_n\|_{(t'_j)}T'_{|\alpha|+1} M_{|\alpha|+1}||y|\\
&\leq& \frac{c_0t_1M_1\sqrt{d}\|\varphi_n\|_{(t'_j)}T_{\alpha}M_{\alpha}|y|}{2}.
\end{eqnarray*}
Note that $\|\varphi_n\|_{(t'_j)}\leq \|\theta\|_{(t'_j/2)}\|\varphi\|_{(t'_j/2)}$. So, by the definition of $\mu_m$, we obtain
\begin{eqnarray*}
\frac{\left|D^{\alpha}T_{m,n}(\varphi)(x)-D^{\alpha}T_n(\varphi)(x)\right|}{T_{\alpha}M_{\alpha}}\leq \frac{c_0t_1M_1\sqrt{d}\|\theta\|_{(t'_j/2)}\|\varphi\|_{(t'_j/2)}}{2m}.
\end{eqnarray*}
Now, there exists $C>0$ such that $\ds \sup_{\varphi\in B}\|\varphi\|_{(t'_j/2)}\leq C$. If we take large enough $m_0$, such that $1/m_0\leq 2\varepsilon/\left(c_0Ct_1M_1\sqrt{d}\|\theta\|_{(t'_j/2)}\right)$, then, for all $m\geq m_0$, $T_{m,n}-T_n\in\mathcal{M}(B,V)$.

Now, we  prove that $T_n\longrightarrow \mathrm{Id}$ in $\mathcal{L}_c\left(\dot{\tilde{\mathcal{B}}}^{\{M_p\}}\left(\RR^d\right), \dot{\tilde{\mathcal{B}}}^{\{M_p\}}\left(\RR^d\right)\right)$. Let $B$, $V$ and $\mathcal{M}(B,V)$ be the same as above. There exists $C>0$ such that $\|\varphi\|_{(t_j/2)}\leq C$, for all $\varphi\in B$. Moreover, by lemma \ref{120}, for the chosen $\varepsilon$ and $(t_j)$, there exists $K\subset\subset\RR^d$ such that $\ds \frac{\left|D^{\alpha}\varphi(x)\right|}{T_{\alpha}M_{\alpha}}\leq \frac{\varepsilon}{2\left(1+\|\theta\|_{L^{\infty}}\right)}$ for all $\alpha\in\NN^d$, $x\in\RR^d\backslash K$ and $\varphi\in B$. There exists $n_0$ such that $K\subset\subset \mathrm{int}K_{n_0}$ and $C\|\theta\|_{(t_j/2)}/2^{n_0}\leq \varepsilon/2$. So, for $n\geq n_0$, we have\\
$\ds\frac{\left|D^{\alpha}T_n(\varphi)(x)-D^{\alpha}\varphi(x)\right|}{T_{\alpha}M_{\alpha}}$
\begin{eqnarray*}
&\leq& \left|1-\theta(x/2^n)\right|\frac{\left|D^{\alpha}\varphi(x)\right|}{T_{\alpha}M_{\alpha}}+
\sum_{\substack{\beta\leq\alpha\\ \beta\neq 0}}{\alpha\choose\beta}\frac{\left|D^{\beta}\theta(x/2^n)\right| \left|D^{\alpha-\beta}\varphi(x)\right|}{2^{n|\beta|}T_{\alpha}M_{\alpha}}\\
&\leq&\frac{\varepsilon}{2}+\frac{\|\theta\|_{(t_j/2)}\|\varphi\|_{(t_j/2)}}{2^n}\leq \varepsilon,
\end{eqnarray*}
that is $T_n-\mathrm{Id}\in \mathcal{M}(B,V)$, for all $n\geq n_0$.  Thus, $\mathrm{Id}$ belongs to the sequential closure of $\left(\dot{\tilde{\mathcal{B}}}^{\{M_p\}}\right)'\left(\RR^d\right)\otimes \dot{\tilde{\mathcal{B}}}^{\{M_p\}}\left(\RR^d\right)$.\qed
\end{proof}

If $E$ is a complete l.c.s., by proposition \ref{pr11}, proposition 1.4. of \cite{Komatsu3} and (\ref{1110}), we have the following isomorphisms of l.c.s.
\begin{equation}\label{incl}
\dot{\tilde{\mathcal{B}}}^{\{M_p\}}\left(\RR^d\right)\varepsilon E\cong \mathcal{L}_{\epsilon}\left(\left(\dot{\tilde{\mathcal{B}}}^{\{M_p\}}\right)'_c\left(\RR^d\right), E\right)\cong \mathcal{L}_{\epsilon}\left(E'_c, \dot{\tilde{\mathcal{B}}}^{\{M_p\}}\left(\RR^d\right)\right)\cong \dot{\tilde{\mathcal{B}}}^{\{M_p\}}\hat{\otimes}_{\epsilon} E.
\end{equation}

Let $E$ be a complete l.c.s. Define the space $\dot{\tilde{\mathcal{B}}}^{\{M_p\}}\left(\RR^d;E\right)$ as the space of all smooth $E-$valued functions $\varphi$ on $\RR^d$
 so that
\begin{itemize}
\item[$i)$] for each continuous seminorm $q$ of $E$ and $(t_j)\in\mathfrak{R}$ there exists $C>0$ such that
$\ds q_{(t_j)}(\varphi)=\sup_{\alpha\in\NN^d}\sup_{x\in\RR^d} q\left(\frac{D^{\alpha}\varphi(x)}{T_{\alpha}M_{\alpha}}\right)$,
\item[$ii)$] for every $\varepsilon>0$, $(t_j)\in\mathfrak{R}$ and $q$ a continuous seminorm on $E$, there exists $K\subset\subset\RR^d$ such that $\ds q\left(\frac{D^{\alpha}\varphi(x)}{T_{\alpha}M_{\alpha}}\right)\leq \varepsilon$, for all $\alpha\in\NN^d$ and $x\in \RR^d\backslash K$.
\end{itemize}
We equip $\dot{\tilde{\mathcal{B}}}^{\{M_p\}}\left(\RR^d;E\right)$ with the locally convex topology generated by  seminorms
$ q_{(t_j)}$, $q$ are seminorms on $E$ and $(t_j)\in\mathfrak{R}$. This topology is obviously Hausdorff and hence, $\dot{\tilde{\mathcal{B}}}^{\{M_p\}}\left(\RR^d;E\right)$ is a l.c.s.

\begin{proposition}\label{130}
$\dot{\tilde{\mathcal{B}}}^{\{M_p\}}\left(\RR^d;E\right)$  and $\dot{\tilde{\mathcal{B}}}^{\{M_p\}}\left(\RR^d\right)\varepsilon E$,
 are isomorphic l.c.s.
\end{proposition}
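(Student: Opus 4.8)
The plan is to invoke the last isomorphism in (\ref{incl}) and to prove instead that $\dot{\tilde{\mathcal{B}}}^{\{M_p\}}\left(\RR^d;E\right)$ and $\mathcal{L}_{\epsilon}\left(E'_c,\dot{\tilde{\mathcal{B}}}^{\{M_p\}}\left(\RR^d\right)\right)$ are isomorphic l.c.s. The natural candidate is the map $\Phi$ sending $\varphi\in\dot{\tilde{\mathcal{B}}}^{\{M_p\}}\left(\RR^d;E\right)$ to the operator $\Phi_{\varphi}$, where $\Phi_{\varphi}(e')$ is the scalar function $x\mapsto\langle e',\varphi(x)\rangle$. Since $\varphi$ is a smooth $E$-valued function, $\Phi_{\varphi}(e')$ is smooth with $D^{\alpha}\Phi_{\varphi}(e')=\langle e',D^{\alpha}\varphi\rangle$, and for any $e'\in E'$ there is a continuous seminorm $q$ on $E$ with $|\langle e',e\rangle|\le q(e)$; hence conditions $i)$ and $ii)$ defining $\dot{\tilde{\mathcal{B}}}^{\{M_p\}}\left(\RR^d;E\right)$ pass verbatim to $\Phi_{\varphi}(e')$, so by Lemma \ref{5}, $\Phi_{\varphi}(e')\in\dot{\tilde{\mathcal{B}}}^{\{M_p\}}\left(\RR^d\right)$. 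Thus $\Phi$ is a well defined injective linear map, and it remains to prove: $(a)$ $\Phi_{\varphi}$ is continuous on $E'_c$; $(b)$ $\Phi$ is onto; $(c)$ $\Phi$ is bicontinuous.

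For $(a)$, fix $(t_j)\in\mathfrak{R}$ and note $\|\Phi_{\varphi}(e')\|_{(t_j)}=\sup_{e\in S}|\langle e',e\rangle|$, where $S=\left\{D^{\alpha}\varphi(x)/(T_{\alpha}M_{\alpha})\,:\,\alpha\in\NN^d,\ x\in\RR^d\right\}$; it therefore suffices to show that $S$ is relatively compact in $E$, for then its closed convex circled hull is a compact set $A$ (as $E$ is complete) and $\|\Phi_{\varphi}(\cdot)\|_{(t_j)}$ is dominated by the $E'_c$-seminorm $e'\mapsto\sup_{e\in A}|\langle e',e\rangle|$. To see that $S$ is precompact, fix a continuous seminorm $q$ on $E$ and $\varepsilon>0$: by Lemma \ref{3} choose $(s_j)\in\mathfrak{R}$ with $2s_j\le t_j$; condition $i)$ applied with $(s_j)$ yields $q\left(D^{\alpha}\varphi(x)/(T_{\alpha}M_{\alpha})\right)\le C\cdot2^{-|\alpha|}$, so only finitely many $\alpha$ contribute more than $\varepsilon$, and for each such $\alpha$ condition $ii)$ confines the relevant $x$ to a compact set $K$ up to $\varepsilon$, on which $x\mapsto D^{\alpha}\varphi(x)$ is continuous with relatively compact image. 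Gluing finitely many $\varepsilon$-nets proves $S$ precompact, hence relatively compact.

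For $(b)$, let $T\in\mathcal{L}_{\epsilon}\left(E'_c,\dot{\tilde{\mathcal{B}}}^{\{M_p\}}\left(\RR^d\right)\right)$. For each $x$ the functional $e'\mapsto(Te')(x)$ is $T$ followed by the continuous evaluation at $x$ on $\dot{\tilde{\mathcal{B}}}^{\{M_p\}}\left(\RR^d\right)$, hence is continuous on $E'_c$; since $(E'_c)'=E$, there is a unique $\varphi(x)\in E$ with $\langle e',\varphi(x)\rangle=(Te')(x)$. The map $\varphi$ is scalarly smooth, and by the standard argument (reconstruct $D^{\alpha}\varphi(x)\in E$ via $(E'_c)'=E$ and identify it with the true derivative through vector-valued Riemann integrals, legitimate since $E$ is complete) it is a smooth $E$-valued function with $\langle e',D^{\alpha}\varphi\rangle=D^{\alpha}(Te')$. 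Conditions $i)$ and $ii)$ follow by polarity: if $q$ is a continuous seminorm on $E$ and $W=\{e\in E\,:\,q(e)\le1\}$, then $q\left(D^{\alpha}\varphi(x)/(T_{\alpha}M_{\alpha})\right)=\sup_{e'\in W^{\circ}}\left|D^{\alpha}(Te')(x)\right|/(T_{\alpha}M_{\alpha})$; since $W^{\circ}$ is equicontinuous it is $\sigma(E',E)$-compact, and as $\sigma(E',E)$ agrees with the topology of $E'_c$ on equicontinuous sets, $W^{\circ}$ is compact in $E'_c$. Consequently $T(W^{\circ})$ is a bounded subset of $\dot{\tilde{\mathcal{B}}}^{\{M_p\}}\left(\RR^d\right)$, which gives $i)$, and a compact one, so that Lemma \ref{120} supplies, for every $\varepsilon$ and $(t_j)$, a compact $K$ realizing the decay required in $ii)$. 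Clearly $\Phi_{\varphi}=T$.

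For $(c)$, with $q$ and $W=\{q\le1\}$ as above, the identity $q_{(t_j)}(\varphi)=\sup_{e'\in W^{\circ}}\|\Phi_{\varphi}(e')\|_{(t_j)}$ holds for every $\varphi$; since the seminorms $q_{(t_j)}$ generate the topology of $\dot{\tilde{\mathcal{B}}}^{\{M_p\}}\left(\RR^d;E\right)$, the polars $W^{\circ}$ run cofinally through the equicontinuous subsets of $E'$, and the $\|\cdot\|_{(t_j)}$ generate the topology of $\dot{\tilde{\mathcal{B}}}^{\{M_p\}}\left(\RR^d\right)$, the seminorms on the right generate the topology of $\mathcal{L}_{\epsilon}\left(E'_c,\dot{\tilde{\mathcal{B}}}^{\{M_p\}}\left(\RR^d\right)\right)$; hence $\Phi$ is a topological isomorphism, and together with (\ref{incl}) this proves the proposition. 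I expect the main obstacle to be step $(a)$: the relative compactness of the set $S$ of normalized derivatives is exactly the point where conditions $i)$, $ii)$ and Lemma \ref{3} must be combined with the compactness of the image of a compact set under a continuous $E$-valued map; the weak-to-strong smoothness used in $(b)$ is the only other genuinely technical ingredient.
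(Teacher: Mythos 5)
Your proposal is correct and follows essentially the same route as the paper: reduce via (\ref{incl}) to $\mathcal{L}_{\epsilon}\left(E'_c,\dot{\tilde{\mathcal{B}}}^{\{M_p\}}\left(\RR^d\right)\right)$, send $\varphi$ to $e'\mapsto\langle e',\varphi(\cdot)\rangle$, prove precompactness of the set of normalized derivatives to get continuity on $E'_c$, and use polars of seminorm balls together with Lemma \ref{120} for the topology and for surjectivity. The only difference is presentational: where you sketch the weak-to-strong smoothness and the identification of $\varepsilon$-product elements with $E$-valued functions by hand, the paper outsources exactly these steps to Theorems 1.12 and 3.10 of Komatsu's paper \cite{Komatsu3} (and your invocation of Lemma \ref{3} in step $(a)$ is unnecessary, since $(t_j/2)\in\mathfrak{R}$ directly).
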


\begin{proof} By (\ref{incl}), it is enough to prove  that $\dot{\tilde{\mathcal{B}}}^{\{M_p\}}\left(\RR^d;E\right)\cong\mathcal{L}_{\epsilon}\left(E'_c,
\dot{\tilde{\mathcal{B}}}^{\{M_p\}}\left(\RR^d\right)\right)$. Let $\varphi\in \dot{\tilde{\mathcal{B}}}^{\{M_p\}}\left(\RR^d;E\right)$, $e'\in E'$ and
$\tilde{\varphi}_{e'}(x)=\langle e',\varphi(x)\rangle, x\in \RR^d$. Clearly, $\tilde{\varphi}_{e'}$ is smooth
 and $D^{\alpha}\tilde{\varphi}_{e'}=\left\langle e', D^{\alpha}\varphi\right\rangle$.  Let $(t_j)\in \mathfrak{R}$ and $\varepsilon>0.$ Then
\begin{eqnarray*}
\frac{\left|D^{\alpha}\tilde{\varphi}_{e'}(x)\right|}{T_{\alpha}M_{\alpha}}=\left|\left\langle e',
\frac{D^{\alpha}\varphi(x)}{T_{\alpha}M_{\alpha}}\right\rangle\right|\leq C_1q\left(\frac{D^{\alpha}\varphi(x)}{T_{\alpha}M_{\alpha}}\right)\leq C_1q_{(t_j)}(\varphi),
\alpha\in\NN^d, x\in \RR^d,
\end{eqnarray*}
and  there exists $K\subset\subset\RR^d$ such that $q\left(D^{\alpha}\varphi(x)/(T_{\alpha}M_{\alpha})\right)\leq \varepsilon/C_1$, for all $\alpha\in\NN^d$
and $x\in \RR^d\backslash K$. Similarly as above, one obtains that $\left|D^{\alpha}\tilde{\varphi}_{e'}(x)\right|/(T_{\alpha}M_{\alpha})\leq \varepsilon$
for all $\alpha\in \NN^d$ and $x\in\RR^d\backslash K$, i.e. $\tilde{\varphi}_{e'}\in \dot{\tilde{\mathcal{B}}}^{\{M_p\}}\left(\RR^d\right)$.
Let $\varphi\in \dot{\tilde{\mathcal{B}}}^{\{M_p\}}\left(\RR^d;E\right)$  and observe the mapping
$T_{\varphi}: E'\longrightarrow \dot{\tilde{\mathcal{B}}}^{\{M_p\}}\left(\RR^d\right)$, $e'\mapsto T_{\varphi}(e')=\tilde{\varphi}_{e'}$.\\
We prove that $T_{\varphi}\in \mathcal{L}\left(E'_c, \dot{\tilde{\mathcal{B}}}^{\{M_p\}}\left(\RR^d\right)\right)$.
Let
$ A=\left\{\frac{D^{\alpha}\varphi(x)}{(T_{\alpha}M_{\alpha}}\Big|x\in\RR^d,\, \alpha\in\NN^d\right\}$.
We will prove that $A$ is precompact in $E$. Let $U=\{e\in E|q_1(e)\leq r,...,q_n(e)\leq r\}$ be a neighborhood of zero in $E$.
 For the chosen $r$, $(t_j)$ and $q_1,...,q_n$, there exists $K\subset\subset\RR^d$ such that
 $q_l\left(D^{\alpha}\varphi(x)/(T_{\alpha}M_{\alpha})\right)\leq \ds\frac{r}{2}$, for all $\alpha\in\NN^d$, $x\in\RR^d\backslash K$ and $l=1,...,n$. Moreover, there exists $C>0$ such that $q_{l,(t_j/2)}(\varphi)\leq C$, for all $l=1,...,n$. Take $s\in\ZZ_+$ such that $1/2^s\leq r/(2C)$. Then, if $|\alpha|\geq s$, we have $q_l\left(D^{\alpha}\varphi(x)/(T_{\alpha}M_{\alpha})\right)\leq \ds\frac{r}{2}$ for all $x\in\RR^d$. The set $A'=\left\{ D^{\alpha}\varphi(x)/(T_{\alpha}M_{\alpha})\Big|x\in K,\, |\alpha|< s\right\}$ is obviously compact in $E$. So, there exists a finite subset $B'_0$ of $A'$ such that $A'\subseteq B'_0+U$. Take $x_1\in K$, $x_2\in\RR^d\backslash K$ and let $\beta\in\NN^d$ be a fixed $d$-tuple such that $|\beta|>s$. Consider the set $\ds B_0=B'_0\bigcup \left\{D^{\beta}\varphi(x_1)/(T_{\beta}M_{\beta}),\varphi(x_2)\right\}\subseteq A$. If $|\alpha|<s$ and $x\in K$,
  $D^{\alpha}\varphi(x)/(T_{\alpha}M_{\alpha})\in B_0+U$. If $|\alpha|\geq s$ and $x\in K$, we have
\begin{eqnarray*}
q_l\left(\frac{D^{\alpha}\varphi(x)}{T_{\alpha}M_{\alpha}}-\frac{D^{\beta}\varphi(x_1)}{T_{\beta}M_{\beta}}\right)\leq q_l\left(\frac{D^{\alpha}\varphi(x)}{T_{\alpha}M_{\alpha}}\right)+ q_l\left(\frac{D^{\beta}\varphi(x_1)}{T_{\beta}M_{\beta}}\right)\leq r,
\, l=1,...,n.
\end{eqnarray*}
Also,  if $x\in\RR^d\backslash K$ and $\alpha\in\NN^d$, we have
\begin{eqnarray*}
q_l\left(\frac{D^{\alpha}\varphi(x)}{T_{\alpha}M_{\alpha}}-\varphi(x_2)\right)\leq q_l\left(\frac{D^{\alpha}\varphi(x)}{T_{\alpha}M_{\alpha}}\right)+q_l\left(\varphi(x_2)\right)\leq r,
\, l=1,...,n.
\end{eqnarray*}
We obtain that $A\subseteq B_0+U$. Thus, $A$ is precompact.

Let $V=\left\{\psi\in\dot{\tilde{\mathcal{B}}}^{\{M_p\}}\left(\RR^d\right)\big|\|\psi\|_{(t_j)}\leq \varepsilon\right\}$ be a neighborhood of zero in $\dot{\tilde{\mathcal{B}}}^{\{M_p\}}$. Because $A$ is precompact and $E$ is complete l.c.s., $\tilde A$ - the closed convex circled hull of $A$ is compact.
 Let $W=\left(1/\varepsilon \tilde{A}\right)^{\circ}$ ($ ^{\circ}$ means  the polar).  Let $e'\in W$. Then
\begin{eqnarray*}
\frac{\left|D^{\alpha}T_{\varphi}(e')(x)\right|}{T_{\alpha}M_{\alpha}}=\left|\left\langle e', \frac{D^{\alpha}\varphi(x)}{T_{\alpha}M_{\alpha}}\right\rangle\right|\leq \varepsilon,\,
\alpha\in\NN^d, x\in\RR^d,
\end{eqnarray*}
and the continuity of $T_{\varphi}$ follows.

Now we  prove that the topology of $\dot{\tilde{\mathcal{B}}}^{\{M_p\}}\left(\RR^d;E\right)$ is the induced topology from
 $\mathcal{L}_{\epsilon}\left(E'_c,\dot{\tilde{\mathcal{B}}}^{\{M_p\}}\left(\RR^d\right)\right)$ when we consider
 it as a subspace of the latter by the injection $\varphi\mapsto T_{\varphi}$. Let $\mathcal{M}(B,V)$ be a neighborhood of zero
 in $\mathcal{L}_{\epsilon}\left(E'_c, \dot{\tilde{\mathcal{B}}}^{\{M_p\}}\left(\RR^d\right)\right)$, where $V$ is as above
  and $B$ is an equicontinuous subset of $E'$. Let $U=\{e\in E|q_1(e)\leq r,...,q_n(e)\leq r\}$ be a neighborhood of zero in $E$
   such that $|\langle e',e\rangle|\leq \varepsilon$, when $e\in U$ and $e'\in B$. Let
   $$W=\left\{\varphi\in\dot{\tilde{\mathcal{B}}}^{\{M_p\}}\left(\RR^d;E\right)\big|q_{1,(t_j)}(\varphi)\leq r,..., q_{n,(t_j)}(\varphi)\leq r\right\}.$$
   Then, for $\varphi\in W$, $\ds \frac{D^{\alpha}\varphi(x)}{T_{\alpha}M_{\alpha}}\in U$ for all $\alpha\in\NN^d$ and $x\in \RR^d$. Hence, for $e'\in B$,
$
\left|D^{\alpha}T_{\varphi}(e')(x)\right|/(T_{\alpha}M_{\alpha})\leq \varepsilon,\,
\alpha\in\NN^d, x\in\RR^d,
$
i.e. $T_{\varphi}\in \mathcal{M}(B,V)$, for all $\varphi\in W$. Conversely, let $W$ be a neighborhood of zero in
$\dot{\tilde{\mathcal{B}}}^{\{M_p\}}\left(\RR^d;E\right)$ given as above. Consider  $U$ as above and
$B=U^{\circ}$. If $\varphi\in W$ and $e'\in B$, then $\|T_{\varphi}(e')\|_{(t_j)}\leq 1$.
Let $V=\left\{\psi\in\dot{\tilde{\mathcal{B}}}^{\{M_p\}}\left(\RR^d;E\right)|\|\psi\|_{(t_j)}\leq 1\right\}$ and
$\tilde{G}=\mathcal{M}(B,V)\cap \left\{T_{\varphi}|\varphi\in\dot{\tilde{\mathcal{B}}}^{\{M_p\}}\left(\RR^d;E\right)\right\}$.
 Let $T_{\varphi}\in\tilde{G}$. Then, for all $e'\in B$, $T_{\varphi}(e')\in V$, i.e. $\ds \left\|T_{\varphi}(e')\right\|_{(t_j)}\leq 1$. So, we have
\begin{eqnarray*}
\left|\left\langle e',\frac{D^{\alpha}\varphi(x)}{T_{\alpha}M_{\alpha}}\right\rangle\right|= \frac{\left|D^{\alpha}T_{\varphi}(e')(x)\right|}{T_{\alpha}M_{\alpha}}\leq 1,\,
\alpha\in\NN^d, x\in\RR^d, e'\in B.
\end{eqnarray*}
We obtain that, for all $\alpha\in\NN^d$ and $x\in\RR^d$, $\ds \frac{D^{\alpha}\varphi(x)}{T_{\alpha}M_{\alpha}}\in B^{\circ}=
U^{\circ\circ}=U$. But this means that  $\varphi\in W$. Hence, we proved that $\varphi\mapsto T_{\varphi}$ is a topological imbedding of $\dot{\tilde{\mathcal{B}}}^{\{M_p\}}\left(\RR^d;E\right)$ into $\mathcal{L}_{\epsilon}\left(E'_c, \dot{\tilde{\mathcal{B}}}^{\{M_p\}}\left(\RR^d\right)\right)$. It remains to prove that this mapping
is a surjection. By theorem 1.12 of \cite{Komatsu3}, $\dot{\tilde{\mathcal{B}}}^{\{M_p\}}\left(\RR^d\right)\varepsilon E\cong \mathcal{L}_{\epsilon}\left(E'_c, \dot{\tilde{\mathcal{B}}}^{\{M_p\}}\left(\RR^d\right)\right)$ is identified with the space of all $f\in \mathcal{C}\left(\RR^d;E\right)$ such that:
\begin{itemize}
\item[$i)$] for any $e'\in E'$, the function $\langle e', f(\cdot)\rangle$ is in $\dot{\tilde{\mathcal{B}}}^{\{M_p\}}\left(\RR^d\right)$;
\item[$ii)$] for every equicontinuous set $A'$ in $E'$, the set $\{\langle e', f(\cdot)\rangle| e'\in A'\}$ is relatively compact in $\dot{\tilde{\mathcal{B}}}^{\{M_p\}}\left(\RR^d\right)$.
\end{itemize}
Every such $f$ generates an operator $L'\in \mathcal{L}\left(E'_c,\dot{\tilde{\mathcal{B}}}^{\{M_p\}}\left(\RR^d\right)\right)$ by
$L'(e')(\cdot)=\tilde f_{e'}=\langle e', f(\cdot)\rangle$, which gives the algebraic isomorphism between the space of all $f\in \mathcal{C}\left(\RR^d;E\right)$ which satisfy the above conditions and $\mathcal{L}\left(E'_c, \dot{\tilde{\mathcal{B}}}^{\{M_p\}}\left(\RR^d\right)\right)$. We will prove that every such $f$ belongs to $\dot{\tilde{\mathcal{B}}}^{\{M_p\}}\left(\RR^d;E\right)$ and obtain the desired surjectivity. So, let $f\in \mathcal{C}\left(\RR^d;E\right)$ be a function that satisfies the conditions $i)$ and $ii)$.
By the above conditions, $\tilde{f}_{e'}\in \dot{\tilde{\mathcal{B}}}^{\{M_p\}}\left(\RR^d\right)\subseteq\EE^{\{M_p\}}\left(\RR^d\right)$, so,
by theorem 3.10 of \cite{Komatsu3}, we get that $f\in\EE^{\{M_p\}}\left(\RR^d; E\right)$. Hence $f$ is smooth $E$-valued  and
from the quoted theorem it follows that $D^{\alpha}\tilde{f}_{e'}(x)=\left\langle e', D^{\alpha}f(x)\right\rangle$. Let $(t_j)\in\mathfrak{R}$.
Then $$\ds\left|\left\langle e',\frac{D^{\alpha}f(x)}{T_{\alpha}M_{\alpha}}\right\rangle\right|= \frac{\left|D^{\alpha}\tilde{f}_{e'}(x)\right|}{T_{\alpha}
M_{\alpha}}\leq \left\|\tilde{f}_{e'}\right\|_{(t_j)}.$$ So, the set $\ds \left\{\frac{D^{\alpha}f(x)}{T_{\alpha}M_{\alpha}}\Big|\alpha\in\NN^d,\, x\in\RR^d\right\}$
is weakly bounded, hence it is bounded in $E$.
Let $q$ be a continuous seminorm in $E$ and $U=\{e\in E| q(e)\leq \varepsilon\}$ in $E$. There exists $C>0$ such that $\ds q\left(D^{\alpha}f(x)/(T_{\alpha}M_{\alpha})\right)\leq C$, for all $\alpha\in\NN^d$ and $x\in\RR^d$. Since $A'=W^{\circ}$ is equicontinuous set in $E'$, $\left\{\tilde{f}_{e'}|e'\in A'\right\}$ is relatively compact in $\dot{\tilde{\mathcal{B}}}^{\{M_p\}}\left(\RR^d\right)$. By lemma \ref{120}, for the chosen $(t_j)$, there exists $K\subset\subset\RR^d$ such that
$\ds \left|D^{\alpha}\tilde{f}_{e'}(x)\right|/(T_{\alpha}M_{\alpha})\leq 1$, for all $\alpha \in\NN^d$, $x\in\RR^d\backslash K$ and $e'\in A'$.
We obtain that, for $\alpha\in\NN^d$ and $x\in\RR^d\backslash K$, $\ds \frac{D^{\alpha}f(x)}{T_{\alpha}M_{\alpha}}\in A'^{\circ}=U^{\circ\circ}=U$.
But then, $\ds q\left(D^{\alpha}f(x)/(T_{\alpha}M_{\alpha})\right)\leq \varepsilon$, for all $\alpha\in \NN^d$ and $x\in\RR^d\backslash K$. We obtain that $f\in \dot{\tilde{\mathcal{B}}}^{\{M_p\}}\left(\RR^d;E\right)$.\qed
\end{proof}

Hence, if we take $E=\dot{\tilde{\mathcal{B}}}^{\{M_p\}}\left(\RR^m\right)$, we get,
\begin{equation} \label{333}
\dot{\tilde{\mathcal{B}}}^{\{M_p\}}\left(\RR^d;\dot{\tilde{\mathcal{B}}}^{\{M_p\}}\left(\RR^m\right)\right)\cong \dot{\tilde{\mathcal{B}}}^{\{M_p\}}\left(\RR^d\right)\varepsilon \dot{\tilde{\mathcal{B}}}^{\{M_p\}}\left(\RR^m\right)\cong \dot{\tilde{\mathcal{B}}}^{\{M_p\}}\left(\RR^d\right)\hat{\otimes}_{\epsilon} \dot{\tilde{\mathcal{B}}}^{\{M_p\}}\left(\RR^m\right).
\end{equation}

\begin{proposition}\label{100}
$\dot{\tilde{\mathcal{B}}}^{\{M_p\}}\left(\RR^{d_1+d_2}\right)\cong \dot{\tilde{\mathcal{B}}}^{\{M_p\}}\left(\RR^{d_1}\right)\hat{\otimes}_{\epsilon} \dot{\tilde{\mathcal{B}}}^{\{M_p\}}\left(\RR^{d_2}\right)$.
\end{proposition}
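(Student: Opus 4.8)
The plan is to factor the claimed isomorphism through the vector-valued space $\dot{\tilde{\mathcal{B}}}^{\{M_p\}}\bigl(\RR^{d_1};\dot{\tilde{\mathcal{B}}}^{\{M_p\}}(\RR^{d_2})\bigr)$. By (\ref{333}) (that is, Proposition \ref{130} with $E=\dot{\tilde{\mathcal{B}}}^{\{M_p\}}(\RR^{d_2})$, which is complete) we already have
$$\dot{\tilde{\mathcal{B}}}^{\{M_p\}}(\RR^{d_1})\hat{\otimes}_{\epsilon}\dot{\tilde{\mathcal{B}}}^{\{M_p\}}(\RR^{d_2})\cong\dot{\tilde{\mathcal{B}}}^{\{M_p\}}\bigl(\RR^{d_1};\dot{\tilde{\mathcal{B}}}^{\{M_p\}}(\RR^{d_2})\bigr),$$
so it suffices to establish the ``exponential law''
$$\dot{\tilde{\mathcal{B}}}^{\{M_p\}}(\RR^{d_1+d_2})\cong\dot{\tilde{\mathcal{B}}}^{\{M_p\}}\bigl(\RR^{d_1};\dot{\tilde{\mathcal{B}}}^{\{M_p\}}(\RR^{d_2})\bigr).$$
Writing a point of $\RR^{d_1+d_2}$ as $(x,y)$ and a multi-index as $\gamma=(\alpha,\beta)$, so $|\gamma|=|\alpha|+|\beta|$, the candidate isomorphism is $\iota\colon\varphi\mapsto\Phi$ with $\Phi(x)=\varphi(x,\cdot)$.

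The first ingredient is a dictionary between the defining seminorms on the two sides. Given $(t_j),(s_j)\in\mathfrak{R}$, put $v_j=\min\{t_j,s_j\}$, pass to the minorant $(v'_j)\in\mathfrak{R}$ of Lemma \ref{3} (so $V'_{|\gamma|}\le 2^{|\gamma|}V'_{|\alpha|}V'_{|\beta|}$), and set $u_j=v'_j/(2H)$; then, using $M_{|\gamma|}\le c_0H^{|\gamma|}M_{|\alpha|}M_{|\beta|}$ from $(M.2)$, one obtains $U_{|\gamma|}M_{|\gamma|}\le c_0\,T_{|\alpha|}M_{|\alpha|}\,S_{|\beta|}M_{|\beta|}$ for all $\alpha,\beta$. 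Conversely, given $(u_j)\in\mathfrak{R}$, take $t_j=s_j=u_j$; since $(u_j)$ is increasing one has $U_{|\alpha|}U_{|\beta|}\le U_{|\alpha|+|\beta|}$, and $M_{|\alpha|}M_{|\beta|}\le M_{|\alpha|+|\beta|}$ by the log-convexity $(M.1)$ (recall $M_0=1$), hence $T_{|\alpha|}M_{|\alpha|}\,S_{|\beta|}M_{|\beta|}\le U_{|\gamma|}M_{|\gamma|}$. These two comparisons match the seminorm $q_{(t_j)}$ of $\dot{\tilde{\mathcal{B}}}^{\{M_p\}}(\RR^{d_1};\dot{\tilde{\mathcal{B}}}^{\{M_p\}}(\RR^{d_2}))$ (with $q=\|\cdot\|_{(s_j)}$) against the norms $\|\cdot\|_{(u_j)}$ on $\dot{\tilde{\mathcal{B}}}^{\{M_p\}}(\RR^{d_1+d_2})$, up to fixed constants; and they likewise match the ``decay at infinity'' clauses once we observe, via Lemma \ref{5}, that a compact $K\subset\subset\RR^{d_1+d_2}$, which we may enlarge to a product $K_1\times K_2$, simultaneously controls $\Phi$ outside $K_1$ (clause $ii)$ of the vector-valued space, uniformly in $x$) and every slice $\Phi(x)$ outside $K_2$.

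With the dictionary in hand I would check that $\iota$ is a linear bijection and a homeomorphism. Well-definedness: for fixed $x$, $\varphi(x,\cdot)\in\tilde{\DD}^{\{M_p\}}_{L^\infty}(\RR^{d_2})$ by the seminorm estimate and satisfies the criterion of Lemma \ref{5}, hence lies in $\dot{\tilde{\mathcal{B}}}^{\{M_p\}}(\RR^{d_2})$; the map $x\mapsto\Phi(x)$ is $\mathcal{C}^\infty$ into $\dot{\tilde{\mathcal{B}}}^{\{M_p\}}(\RR^{d_2})$ with $D^\alpha_x\Phi(x)=D^\alpha_x\varphi(x,\cdot)$ — this follows either from a mean-value estimate on $D^\beta_y\varphi(x+he_i,y)-D^\beta_y\varphi(x,y)$, the extra $x$-derivative being absorbed by $(M.2)$ after shifting the index of the sequence (as in the proof of Proposition \ref{pr11}), or, as in the proof of Proposition \ref{130}, from Theorem 3.10 of \cite{Komatsu3} applied to $x\mapsto\varphi(x,\cdot)$ using that it is scalarly smooth; conditions $i)$ and $ii)$ for $\Phi$ then come from the dictionary. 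Injectivity is immediate. For surjectivity, given $\Phi\in\dot{\tilde{\mathcal{B}}}^{\{M_p\}}(\RR^{d_1};\dot{\tilde{\mathcal{B}}}^{\{M_p\}}(\RR^{d_2}))$ I set $\varphi(x,y)=\Phi(x)(y)$; since point evaluation and $\beta$-th differentiation are continuous on $\dot{\tilde{\mathcal{B}}}^{\{M_p\}}(\RR^{d_2})$ and $x\mapsto D^\alpha_x\Phi(x)$ is continuous, the functions $(x,y)\mapsto\bigl(D^\alpha_x\Phi(x)\bigr)^{(\beta)}(y)$ are continuous, so $\varphi\in\mathcal{C}^\infty(\RR^{d_1+d_2})$ with $D^\gamma\varphi(x,y)=\bigl(D^\alpha_x\Phi(x)\bigr)^{(\beta)}(y)$, and then Lemma \ref{5} together with the dictionary gives $\varphi\in\dot{\tilde{\mathcal{B}}}^{\{M_p\}}(\RR^{d_1+d_2})$ with $\iota(\varphi)=\Phi$. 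Continuity of $\iota$ and of $\iota^{-1}$ is precisely the content of the two seminorm comparisons. Composing $\iota$ with the isomorphism displayed above yields the proposition.

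The step I expect to be the main obstacle is the regularity bookkeeping just sketched: promoting the separately-smooth, uniformly-bounded description of $\varphi$ to genuine joint $\mathcal{C}^\infty$ smoothness (for surjectivity) and, in the other direction, to the continuity of $x\mapsto D^\alpha_x\varphi(x,\cdot)$ as a map into $\dot{\tilde{\mathcal{B}}}^{\{M_p\}}(\RR^{d_2})$ — everything else being the delicate but routine manipulation of the sequences $(t_j),(s_j),(u_j)$ under $(M.1)$, $(M.2)$ and Lemma \ref{3}.
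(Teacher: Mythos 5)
Your overall route is exactly the paper's: reduce via (\ref{333}) to the exponential law $\dot{\tilde{\mathcal{B}}}^{\{M_p\}}\left(\RR^{d_1+d_2}\right)\cong \dot{\tilde{\mathcal{B}}}^{\{M_p\}}\left(\RR^{d_1};\dot{\tilde{\mathcal{B}}}^{\{M_p\}}\left(\RR^{d_2}\right)\right)$, and your seminorm dictionary (Lemma \ref{3} plus $(M.2)$ with the $2H$-shift in one direction, monotonicity of the sequence plus $(M.1)$ in the other) is the same computation the paper performs. There is, however, one step where the justification you give would not go through as written: the verification, in the surjectivity direction, that $\varphi(x,y)=\Phi(x)(y)$ satisfies the decay-at-infinity condition on $\RR^{d_1+d_2}$. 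You attribute this to ``Lemma \ref{5} together with the dictionary,'' but Lemma \ref{5} characterizes \emph{individual} elements of $\dot{\tilde{\mathcal{B}}}^{\{M_p\}}\left(\RR^{d_2}\right)$: applied to each slice $D^{\alpha}\Phi(x)/(T_{\alpha}M_{\alpha})$ it produces a compact set in $\RR^{d_2}$ depending on $x$ and $\alpha$. What you actually need is a single $K_2\subset\subset\RR^{d_2}$ such that $\left|D^{\beta}_y\left(D^{\alpha}_x\Phi(x)\right)(y)\right|/(T_{\alpha}M_{\alpha}S_{\beta}M_{\beta})\leq\varepsilon$ for \emph{all} $x$ and $\alpha$ simultaneously when $y\notin K_2$; neither clause $i)$ nor clause $ii)$ of the definition of the vector-valued space gives this for $x$ ranging over a compact set, and the family of slices need not admit a common $K_2$ just because each member does.

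The paper closes exactly this gap by recalling that the set $A=\left\{D^{\alpha}\Phi(x)/(T_{\alpha}M_{\alpha})\,\big|\,\alpha\in\NN^{d_1},\,x\in\RR^{d_1}\right\}$ was shown to be \emph{precompact} in $\dot{\tilde{\mathcal{B}}}^{\{M_p\}}\left(\RR^{d_2}\right)$ in the proof of Proposition \ref{130}, and then applying Lemma \ref{120} (the uniform decay characterization of precompact sets) to $A$ to obtain one $K_2$ valid for all $x$ and $\alpha$. So the missing ingredient is the pair ``precompactness of $A$'' plus Lemma \ref{120}, not Lemma \ref{5}. This is a fillable gap --- both tools are already available to you in the paper --- but as stated your argument omits the only genuinely non-routine input of this half of the proof. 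The rest of your sketch (well-definedness, the smoothness bookkeeping you flag as the main obstacle, which the paper also dispatches as ``standard,'' and the two continuity estimates) matches the paper's proof.
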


\begin{proof}  By (\ref{333})
it is enough to prove $\dot{\tilde{\mathcal{B}}}^{\{M_p\}}\left(\RR^{d_1+d_2}\right)\cong \dot{\tilde{\mathcal{B}}}^{\{M_p\}}\left(\RR^{d_1};
\dot{\tilde{\mathcal{B}}}^{\{M_p\}}\left(\RR^{d_2}\right)\right)$. Let $f\in\dot{\tilde{\mathcal{B}}}^{\{M_p\}}\left(\RR^{d_1}; \dot{\tilde{\mathcal{B}}}^{\{M_p\}}\left(\RR^{d_2}\right)\right)$.
Put $\varphi(x,y)=f(x)(y)$, $x\in\RR^{d_1}, y\in\RR^{d_2}$. One can prove in a standard way that $\varphi$ is smooth on $\RR^{d_1+d_2}$ and that
$D^{\alpha}_x D^{\beta}_y\varphi(x,y)=D^{\beta}_y\left(D^{\alpha}_x f(x)\right)(y)$ for all $\alpha\in\NN^{d_1}$, $\beta\in\NN^{d_2}$ and $(x,y)\in\RR^{d_1+d_2}$. Let $(t_j)\in\mathfrak{R}$ and $\alpha\in\NN^{d_1}$, $\beta\in\NN^{d_2}$ and $(x,y)\in\RR^{d_1+d_2}$. Then
\begin{eqnarray*}
\frac{\left|D^{\alpha}_x D^{\beta}_y \varphi(x,y)\right|}{T_{\alpha+\beta}M_{\alpha+\beta}}\leq \left\|\frac{D^{\alpha}f(x)}{T_{\alpha}M_{\alpha}}\right\|_{(t_j)}\leq \sup_{\alpha\in \NN^{d_1}} \sup_{x\in\RR^{d_1}}\left\|\frac{D^{\alpha}f(x)}{T_{\alpha}M_{\alpha}}\right\|_{(t_j)},
\end{eqnarray*}
which is a seminorm in $\dot{\tilde{\mathcal{B}}}^{\{M_p\}}\left(\RR^{d_1}; \dot{\tilde{\mathcal{B}}}^{\{M_p\}}\left(\RR^{d_2}\right)\right)$.
Moreover, if $\varepsilon>0$, then there exists $K_1\subset\subset\RR^{d_1}$ such that
$\ds \sup_{\alpha\in\NN^{d_1}}\sup_{x\in\RR^{d_1}\backslash K_1} \left\|\frac{D^{\alpha}f(x)}{T_{\alpha}M_{\alpha}}\right\|_{(t_j)}\leq \varepsilon$.
In the proof of proposition \ref{130} we proved that $\ds A=\left\{\frac{D^{\alpha}f(x)}{T_{\alpha}M_{\alpha}}\Big|\alpha\in\NN^{d_1},\, x\in\RR^{d_1}\right\}$ is a
precompact subset of $\dot{\tilde{\mathcal{B}}}^{\{M_p\}}\left(\RR^{d_2}\right)$. So, by lemma \ref{120}, for the chosen $(t_j)$ and $\varepsilon$,
there exists $K_2\subset\subset\RR^{d_2}$such that $$\ds \frac{\left|D^{\beta}_y\left(D^{\alpha}_x f(x)\right)(y)\right|}{T_{\alpha}T_{\beta}M_{\alpha}M_{\beta}}\leq \varepsilon, \alpha\in\NN^{d_1}, \beta\in\NN^{d_2},  x\in \RR^{d_1}, y\in
\RR^{d_2}\backslash K_2.$$ Then
$$\ds \frac{\left|D^{\alpha}_x D^{\beta}_y \varphi(x,y)\right|}{T_{\alpha+\beta}M_{\alpha+\beta}}\leq \varepsilon,  (x,y)\in\RR^{d_1+d_2}\backslash K, K=K_1\times K_2,
\alpha\in\NN^{d_1}, \beta\in\NN^{d_2}.$$
Hence, we obtained that $\varphi\in\dot{\tilde{\mathcal{B}}}^{\{M_p\}}\left(\RR^{d_1+d_2}\right)$. and that
the injection $$f\mapsto \varphi,\, \dot{\tilde{\mathcal{B}}}^{\{M_p\}}\left(\RR^{d_1}; \dot{\tilde{\mathcal{B}}}^{\{M_p\}}\left(\RR^{d_2}\right)\right)\longrightarrow \dot{\tilde{\mathcal{B}}}^{\{M_p\}}\left(\RR^{d_1+d_2}\right)$$ is continuous.

Now, let $\varphi\in\dot{\tilde{\mathcal{B}}}^{\{M_p\}}\left(\RR^{d_1+d_2}\right)$. Let $f$ be the mapping $x\mapsto f(x)= \varphi(x,\cdot)$, $\RR^{d_1}\longrightarrow \dot{\tilde{\mathcal{B}}}^{\{M_p\}}\left(\RR^{d_2}\right)$. Again, by the standard arguments we have that $f$  is a smooth mapping of $\RR^{d_1}$ into
$\dot{\tilde{\mathcal{B}}}^{\{M_p\}}\left(\RR^{d_2}\right)$.  Moreover, $D^{\alpha}f(x)=D^{\alpha}_x \varphi(x,\cdot)$. Let $(t_j),(\tilde{t}_j)\in\mathfrak{R}$. By lemma \ref{3}, we can choose $(t'_j)\in\mathfrak{R}$ such that $t'_j\leq t_j$, $t'_j\leq \tilde{t}_j$ and $T'_{j+k}\leq 2^{j+k}T'_j T'_k$, for all $j,k\in\NN$. Because
\begin{eqnarray*}
\frac{\left|D^{\alpha}_x D^{\beta}_y \varphi(x,y)\right|}{T_{\alpha}\tilde{T}_{\beta}M_{\alpha}M_{\beta}}\leq \frac{c_0(2H)^{|\alpha|+|\beta|}\left|D^{\alpha}_x D^{\beta}_y \varphi(x,y)\right|}{T'_{\alpha+\beta}M_{\alpha+\beta}}\leq c_0\|\varphi\|_{(t'_j/(2H))},
\end{eqnarray*}
for all $\alpha\in\NN^{d_1}$, $\beta\in\NN^{d_2}$ and $(x,y)\in\RR^{d_1+d_2}$, we get
$$\ds \sup_{\alpha\in\NN^{d_1}}\sup_{x\in\RR^{d_1}} \left\|\frac{D^{\alpha}f(x)}{T_{\alpha}M_{\alpha}}\right\|_{(\tilde{t}_j)} \leq c_0 \|\varphi\|_{(t'_j/(2H))}.$$ Let $(t_j),(\tilde{t}_j)\in\mathfrak{R}$, $\varepsilon>0$ be fixed and choose $(t'_j)\in\mathfrak{R}$ as above. Denote $t''_j=t'_j/(2H)$. Then there exists  $K\subset\subset\RR^{d_1+d_2}$ such that
$$\ds \frac{\left|D^{\alpha}_x D^{\beta}_y \varphi(x,y)\right|}{T''_{\alpha+\beta}M_{\alpha+\beta}}\leq \frac{\varepsilon}{c_0}, \alpha\in\NN^{d_1}, \beta\in\NN^{d_2}, (x,y)\in\RR^{d_1+d_2}\backslash K.$$ Let $K_1$ be the projection of $K$ on $\RR^{d_1}$. Then $K_1$ is a compact subset of $\RR^{d_1}$ and if $x\in\RR^{d_1}\backslash K_1$ is fixed, by the above estimates, we have that
$\ds\left\|\frac{D^{\alpha}f(x)}{T_{\alpha}M_{\alpha}}\right\|_{(\tilde{t}_j)}\leq \varepsilon$, for all $\alpha\in \NN^{d_1}$. Because $x\in \RR^{d_1}\backslash K_1$ is arbitrary, it follows that $f\in\dot{\tilde{\mathcal{B}}}^{\{M_p\}}\left(\RR^{d_1}; \dot{\tilde{\mathcal{B}}}^{\{M_p\}}\left(\RR^{d_2}\right)\right)$. From the above estimates, it follows that the mapping $\varphi\mapsto f$, $\dot{\tilde{\mathcal{B}}}^{\{M_p\}}\left(\RR^{d_1+d_2}\right)\longrightarrow \dot{\tilde{\mathcal{B}}}^{\{M_p\}}\left(\RR^{d_1}; \dot{\tilde{\mathcal{B}}}^{\{M_p\}}\left(\RR^{d_2}\right)\right)$, which is obviously injection, is continuous. Observe that the composition in both directions of the two mappings defined above is the identity mapping. So $\dot{\tilde{\mathcal{B}}}^{\{M_p\}}\left(\RR^{d_1+d_2}\right)\cong \dot{\tilde{\mathcal{B}}}^{\{M_p\}}\left(\RR^{d_1}; \dot{\tilde{\mathcal{B}}}^{\{M_p\}}\left(\RR^{d_2}\right)\right)$.\qed
\end{proof}

\section{Existence of convolution of two Roumieu ultradistributions}

We follow in this subsection the ideas for the convolution of Schwartz distributions but since in our case the topological properties are more delicate, the proofs are adequately
more complicate.

We  define an alternative l.c. topology on $\tilde{\DD}^{\{M_p\}}_{L^{\infty}}$ such that its dual is algebraically isomorphic to $\tilde{\DD}'^{\{M_p\}}_{L^1}$ (c.f. \cite{Wagner} for the case of Schwartz distributions).
 Let $g\in \mathcal{C}_0\left(\RR^d\right)$ (the space of all continuous functions that vanish at infinity) and $(t_j)\in\mathfrak{R}$. The seminorms
\begin{eqnarray*}
p_{g,(t_j)}(\varphi)=\sup_{\alpha\in\NN^d}\sup_{x\in\RR^d} \frac{\left|g(x)D^{\alpha}\varphi(x)\right|}{T_{\alpha}M_{\alpha}},\, \varphi\in\tilde{\DD}^{\{M_p\}}_{L^{\infty}}
\end{eqnarray*}
generate l.c. topology on $\tilde{\DD}^{\{M_p\}}_{L^{\infty}}$ and this space with this topology is denoted by $\tilde{\tilde{\DD}}^{\{M_p\}}_{L^{\infty}}$. Note that the inclusions $\tilde{\DD}^{\{M_p\}}_{L^{\infty}}\longrightarrow \tilde{\tilde{\DD}}^{\{M_p\}}_{L^{\infty}}$ and $\DD^{\{M_p\}}\longrightarrow\tilde{\tilde{\DD}}^{\{M_p\}}_{L^{\infty}}\longrightarrow\EE^{\{M_p\}}$ are continuous.

\begin{lemma}\label{10}
Let $P(D)=\sum_{\alpha}c_{\alpha}D^{\alpha}$ be an ultradifferential operator of class $\{M_p\}$. Then $P(D)$ is a continuous mapping from $\tilde{\tilde{\DD}}^{\{M_p\}}_{L^{\infty}}$ to $\tilde{\tilde{\DD}}^{\{M_p\}}_{L^{\infty}}$.
\end{lemma}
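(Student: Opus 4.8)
The plan is to show that for any $g\in\mathcal{C}_0(\RR^d)$ and $(t_j)\in\mathfrak{R}$ there exist $\tilde g\in\mathcal{C}_0(\RR^d)$ and $(\tilde t_j)\in\mathfrak{R}$ such that $p_{g,(t_j)}(P(D)\varphi)\leq C\,p_{\tilde g,(\tilde t_j)}(\varphi)$ for all $\varphi\in\tilde{\tilde{\DD}}^{\{M_p\}}_{L^\infty}$. Since $g$ and $(t_j)$ are arbitrary, this gives continuity. First I would use Lemma \ref{3} to replace, without loss of generality, the given $(t_j)$ by a sequence satisfying $T_{p+q}\leq 2^{p+q}T_pT_q$; one can also take $\tilde g=g$ since multiplication by $g$ is harmless and commutes with the estimates (the estimate affects only the order of differentiation, not the weight in $x$). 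The core is then the standard ultradifferential-operator estimate: writing $P(D)\varphi=\sum_{\beta}c_\beta D^\beta\varphi$, we have $g(x)D^\alpha P(D)\varphi(x)=\sum_\beta c_\beta\, g(x)D^{\alpha+\beta}\varphi(x)$, so
\begin{eqnarray*}
\frac{|g(x)D^\alpha P(D)\varphi(x)|}{T_\alpha M_\alpha}
\leq \sum_\beta |c_\beta|\,\frac{T_{\alpha+\beta}M_{\alpha+\beta}}{T_\alpha M_\alpha}\cdot
\frac{|g(x)D^{\alpha+\beta}\varphi(x)|}{T_{\alpha+\beta}M_{\alpha+\beta}}.
\end{eqnarray*}
Using $M_{\alpha+\beta}\leq c_0 H^{|\alpha|+|\beta|}M_\alpha M_\beta$ from $(M.2)$ and $T_{\alpha+\beta}\leq 2^{|\alpha|+|\beta|}T_\alpha T_\beta$, the ratio is bounded by $c_0(2H)^{|\alpha|+|\beta|}T_\beta M_\beta\leq c_0(2H)^{2|\beta|}(2H)^{|\alpha|}T_\beta M_\beta$; but the factor $(2H)^{|\alpha|}$ is not uniformly bounded in $\alpha$, so this crude bound is not yet enough.

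The fix, which I expect to be the main technical point, is the usual trick of splitting the exponential growth between the weight on $\varphi$ and the coefficient decay of $P$. Introduce an auxiliary sequence: set $\tilde t_1 = t_1/(2H)$ and $\tilde t_j = t_{j}/(2H)$ for all $j$ (or, more carefully, a shifted version as in the proof of Proposition \ref{pr11}), so that $(\tilde t_j)\in\mathfrak{R}$ and $\tilde T_\alpha(2H)^{|\alpha|}\leq T_\alpha$ up to constants. Then $(2H)^{|\alpha|}T_\beta M_\beta\cdot\frac{|g D^{\alpha+\beta}\varphi|}{T_{\alpha+\beta}M_{\alpha+\beta}}$ can be rewritten, after absorbing the $(2H)^{|\alpha|}$ into passing from $\tilde T$ to $T$ in the denominator, as roughly $T_\beta M_\beta\cdot\frac{|g D^{\alpha+\beta}\varphi|}{\tilde T_{\alpha+\beta}M_{\alpha+\beta}}\leq T_\beta M_\beta\, p_{g,(\tilde t_j)}(\varphi)$ once one also handles the $M$-weight analogously by a further factor of $(2H)^{|\beta|}$ absorbed into the coefficient bound. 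Finally, since $P(D)$ is of class $\{M_p\}$, for the value $L=1/(2H)$ (say) there is $C_L$ with $|c_\beta|\leq C_L (2H)^{-|\beta|}M_\beta^{-1}$ wait — more precisely $|c_\beta|\leq C_L L^{|\beta|}M_\beta$ for every $L>0$; choosing $L$ small enough that $L\cdot(\text{the geometric factors }2H\text{ and }2H\text{ from the two }(M.2)\text{-}\!/2^{\cdot}\text{ applications})\leq 1/2$ makes $\sum_\beta |c_\beta|\, (\text{factors})^{|\beta|}T_\beta M_\beta^2$-type sum into a convergent geometric-type series (here one uses that $M_\beta^2/M_{2\beta}$-type quotients and the sequence $(t_j)\to\infty$ only help). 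The upshot is a bound $p_{g,(t_j)}(P(D)\varphi)\leq C\, p_{g,(\tilde t_j)}(\varphi)$ with $C=C\sum_\beta|c_\beta|(\cdots)<\infty$, which is exactly continuity of $P(D)$ on $\tilde{\tilde{\DD}}^{\{M_p\}}_{L^\infty}$.

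The one place requiring care is the bookkeeping of the two separate geometric factors: one factor $(2H)^{|\alpha|+|\beta|}$ comes from $(M.2)$ applied to $M_{\alpha+\beta}$, and another factor $2^{|\alpha|+|\beta|}$ comes from the submultiplicativity $T_{\alpha+\beta}\leq 2^{|\alpha|+|\beta|}T_\alpha T_\beta$ secured by Lemma \ref{3}; the $|\alpha|$-parts of both must be absorbed by shrinking the sequence $(t_j)$ to $(\tilde t_j)$ (costing a fixed constant per step, which is fine), while the $|\beta|$-parts must be absorbed into the freedom $|c_\beta|\leq C_L L^{|\beta|}M_\beta$ by taking $L$ sufficiently small. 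Everything else — the fact that $D^\beta$ commutes with multiplication by the constants $c_\beta$, interchanging sum and supremum, and that the resulting $\beta$-series converges — is routine. I would therefore present the computation as the displayed chain above, followed by the choice of $(\tilde t_j)$ via Lemma \ref{3}, then the choice of $L$, and conclude.
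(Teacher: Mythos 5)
Your overall skeleton (expanding $P(D)\varphi$ termwise, combining $(M.2)$ with the submultiplicativity $T_{\alpha+\beta}\leq 2^{|\alpha|+|\beta|}T_{\alpha}T_{\beta}$ secured by Lemma \ref{3}, and absorbing the $(2H)^{|\alpha|}$ factor by rescaling the sequence) is the same as the paper's. But the step where you make the $\beta$-series converge is a genuine gap. After the absorptions you are left with a sum of the shape $\sum_{\beta}|c_{\beta}|\,T_{\beta}M_{\beta}\,2^{-|\beta|}$. The coefficient condition actually used in the paper's proof is $\sup_{\beta}|c_{\beta}|M_{\beta}/L^{|\beta|}\leq C_L$ for every $L>0$ (the display in the preliminaries has the $M_{\beta}$ on the wrong side), i.e.\ $|c_{\beta}|\leq C_L L^{|\beta|}/M_{\beta}$; your sum then becomes $\sum_{\beta}C_L(L/2)^{|\beta|}T_{\beta}$. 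Since $(t_j)$ increases to infinity, $T_{\beta}=\prod_{j\leq|\beta|}t_j$ grows faster than any geometric sequence, so this diverges for \emph{every} fixed $L>0$: the smallness of $L$ needed to kill the $|\beta|$-th term depends on $|\beta|$, while $C_L$ blows up as $L\to0$. Your parenthetical claim that "$(t_j)\to\infty$ only helps" is backwards — $T_{\beta}$ sits in the numerator. (Your alternative reading $|c_{\beta}|\leq C_LL^{|\beta|}M_{\beta}$ is worse still, producing an extra $M_{\beta}^2$.)

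The missing idea — and the genuinely Roumieu-specific point of the paper's proof — is Lemma 3.4 of \cite{Komatsu3}, which converts the quantifier "for every $L>0$" into a single sequence: there exist $(r_j)\in\mathfrak{R}$ and $C_1>0$ with $|c_{\beta}|R_{\beta}M_{\beta}\leq C_1$. One then chooses an auxiliary $(s'_j)\in\mathfrak{R}$ with $s'_j\leq\min(t_j,r_j)$, regularizes it to $(s_j)$ by Lemma \ref{3}, and measures $\varphi$ in $p_{g,(s_j/(4H))}$. The factor $S_{\beta}$ produced by the seminorm of $\varphi$ is then dominated by the $R_{\beta}$ coming from the coefficient decay, and the series collapses to $c_0\sum_{\beta}2^{-|\alpha|-|\beta|}<\infty$. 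Without this conversion the fixed-$L$ argument cannot be repaired; with it, the rest of your computation goes through essentially as the paper writes it.
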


\begin{proof} We know that $c_{\alpha}$ are constants such that for every $L>0$ there exists $C>0$ such that
$\ds\sup_{\alpha}|c_{\alpha}|M_{\alpha}/L^{|\alpha|}\leq C$. So, by lemma 3.4 of \cite{Komatsu3}, there exists $(r_j)\in\mathfrak{R}$
 and $C_1>0$ such that $\ds\sup_{\alpha}|c_{\alpha}|R_{\alpha}M_{\alpha}\leq C_1$. Let $g\in \mathcal{C}_0$ and $(t_j)\in\mathfrak{R}$. Take $(s'_j)\in\mathfrak{R}$ such that $s'_j\leq r_j$ and $s'_j\leq t_j$ ($S_k\leq T_k, S_k\leq R_k$). By lemma \ref{3}, there exists $(s_j)\in\mathfrak{R}$ such that $s_j\leq s'_j$ and $S_{j+k}\leq 2^{j+k}S_jS_k$, for all $j,k\in\NN$. Then, for $\varphi\in\tilde{\tilde{\DD}}^{\{M_p\}}_{L^{\infty}}$, we have\\
$\ds\frac{\left|g(x)D^{\alpha}\left(P(D)\varphi(x)\right)\right|}{T_{\alpha}M_{\alpha}}$
\begin{eqnarray*}
&\leq& \sum_{\beta}\frac{|c_{\beta}|\left|g(x)D^{\alpha+\beta}\varphi(x)\right|}{T_{\alpha}M_{\alpha}}
\leq C_1p_{g,(s_j/(4H))}(\varphi)\sum_{\beta}\frac{S_{\alpha+\beta}M_{\alpha+\beta}}
{(4H)^{|\alpha|+|\beta|}T_{\alpha}R_{\beta}M_{\alpha}M_{\beta}}\\
&\leq& c_0C_1p_{g,(s_j/(4H))}(\varphi)\sum_{\beta}\frac{S_{\alpha}S_{\beta}}
{2^{|\alpha|+|\beta|}T_{\alpha}R_{\beta}}
\leq C_2p_{g,(s_j/(4H))}(\varphi), \alpha\in\NN^d, x\in\RR^d.
\end{eqnarray*}
Note that we can perform the same calculations as above without $g$, from what it will follow that $P(D)$ is well defined mapping from $\tilde{\tilde{\DD}}^{\{M_p\}}_{L^{\infty}}$ into $\tilde{\tilde{\DD}}^{\{M_p\}}_{L^{\infty}}$ and by the above, it is continuous.
\qed
\end{proof}
\indent Denote by $\left(\tilde{\tilde{\DD}}^{\{M_p\}}_{L^{\infty}}\right)'$ the strong dual of $\tilde{\tilde{\DD}}^{\{M_p\}}_{L^{\infty}}$. By the use of  cutoff functions one obtains the next lemma.
\begin{lemma}\label{15}
$\DD^{\{M_p\}}$ is sequentially dense in $\tilde{\tilde{\DD}}^{\{M_p\}}_{L^{\infty}}$. In particular, the inclusion $\left(\tilde{\tilde{\DD}}^{\{M_p\}}_{L^{\infty}}\right)'\longrightarrow\DD'^{\{M_p\}}$ is continuous.
\end{lemma}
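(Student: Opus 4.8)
The plan is to show that every $\varphi\in\tilde{\tilde{\DD}}^{\{M_p\}}_{L^{\infty}}$ (which, recall, is just $\tilde{\DD}^{\{M_p\}}_{L^{\infty}}$ carried by the weaker seminorms $p_{g,(t_j)}$) is the limit of a sequence from $\DD^{\{M_p\}}$ in this topology. First I would fix a cutoff $\chi\in\DD^{\{M_p\}}$ with $\chi=1$ on $K_{\RR^d}(0,1)$ and $\chi=0$ outside $K_{\RR^d}(0,2)$, so that $|D^{\alpha}\chi(x)|\leq C_1h^{|\alpha|}M_{\alpha}$ for suitable $h,C_1>0$, and set $\chi_n(x)=\chi(x/n)$, $\varphi_n=\chi_n\varphi\in\DD^{\{M_p\}}$. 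The claim is $\varphi_n\to\varphi$ in $\tilde{\tilde{\DD}}^{\{M_p\}}_{L^{\infty}}$. Fix $g\in\mathcal{C}_0$ and $(t_j)\in\mathfrak{R}$, and estimate $p_{g,(t_j)}(\varphi-\varphi_n)$ by the Leibniz rule exactly as in the proof of Lemma \ref{5}:
\begin{eqnarray*}
\frac{\left|g(x)D^{\alpha}(\varphi-\varphi_n)(x)\right|}{T_{\alpha}M_{\alpha}}
&\leq&\frac{\left|1-\chi(x/n)\right|\left|g(x)D^{\alpha}\varphi(x)\right|}{T_{\alpha}M_{\alpha}}
+\frac{C_1\|g\|_{L^{\infty}}\|\varphi\|_{(t_j/2)}}{n}\sum_{\substack{\beta\leq\alpha\\ \beta\neq 0}}{\alpha\choose\beta}\frac{h^{|\beta|}T_{\alpha-\beta}}{2^{|\alpha|-|\beta|}T_{\alpha}}.
\end{eqnarray*}
The second term is bounded by $C_1C_2\|g\|_{L^{\infty}}\|\varphi\|_{(t_j/2)}/n$, uniformly in $\alpha$ and $x$, and thus tends to $0$. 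For the first term, $1-\chi(x/n)$ is supported in $|x|\geq n$; since $g\in\mathcal{C}_0$ we have $\sup_{|x|\geq n}|g(x)|\to 0$, while $\sup_{\alpha,x}|D^{\alpha}\varphi(x)|/(T_{\alpha}M_{\alpha})=\|\varphi\|_{(t_j)}<\infty$, so the first term is at most $\|\varphi\|_{(t_j)}\sup_{|x|\geq n}|g(x)|\to 0$. Hence $p_{g,(t_j)}(\varphi-\varphi_n)\to 0$, proving sequential density.

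For the second assertion, note that the inclusion $\iota:\DD^{\{M_p\}}\to\tilde{\tilde{\DD}}^{\{M_p\}}_{L^{\infty}}$ is continuous (this is recorded in the excerpt just before the lemma), so its transpose ${}^{t}\iota:\left(\tilde{\tilde{\DD}}^{\{M_p\}}_{L^{\infty}}\right)'\to\DD'^{\{M_p\}}$ is well defined. By the sequential density just established, a functional $u\in\left(\tilde{\tilde{\DD}}^{\{M_p\}}_{L^{\infty}}\right)'$ is uniquely determined by its restriction to $\DD^{\{M_p\}}$, so ${}^{t}\iota$ is injective and identifies $\left(\tilde{\tilde{\DD}}^{\{M_p\}}_{L^{\infty}}\right)'$ with a linear subspace of $\DD'^{\{M_p\}}$; and ${}^{t}\iota$ is continuous for the strong topologies since $\iota$ maps bounded sets to bounded sets. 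This gives the stated continuous inclusion.

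The one point requiring a little care — the "main obstacle," though it is mild — is handling the first term above: one must use that the new seminorms involve the factor $g\in\mathcal{C}_0$, so that cutting off far from the origin is cheap precisely because $g$ decays there. This is exactly why the statement is phrased with $\tilde{\tilde{\DD}}^{\{M_p\}}_{L^{\infty}}$ rather than $\tilde{\DD}^{\{M_p\}}_{L^{\infty}}$ (in the latter, $\DD^{\{M_p\}}$ is of course not dense, its closure being $\dot{\tilde{\mathcal{B}}}^{\{M_p\}}$). Everything else is a routine repetition of the Leibniz estimate already carried out in Lemma \ref{5}.
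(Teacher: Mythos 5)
Your proof is correct and follows exactly the route the paper intends: the paper gives no written proof, saying only ``by the use of cutoff functions one obtains the next lemma,'' and your argument is the natural completion of that hint, reusing the Leibniz estimate from Lemma \ref{5} and correctly isolating the one new point, namely that the factor $g\in\mathcal{C}_0$ absorbs the first term since $1-\chi(x/n)$ is supported in $|x|\geq n$ where $\sup|g|\to 0$. The deduction of the continuous inclusion of the duals via the transpose is also standard and sound.
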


\begin{lemma}\label{20}
The bilinear mapping $(\varphi,\psi)\mapsto\varphi\psi$, $\tilde{\tilde{\DD}}^{\{M_p\}}_{L^{\infty}}\times\tilde{\tilde{\DD}}^{\{M_p\}}_{L^{\infty}} \longrightarrow\tilde{\tilde{\DD}}^{\{M_p\}}_{L^{\infty}}$ is continuous.
\end{lemma}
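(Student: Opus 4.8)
The plan is to reduce the continuity of multiplication to the estimate that makes the topology of $\tilde{\tilde{\DD}}^{\{M_p\}}_{L^{\infty}}$ work, namely the splitting $S_{j+k}\le 2^{j+k}S_jS_k$ from Lemma \ref{3} together with condition $(M.2)$. Fix $g\in\mathcal{C}_0(\RR^d)$ and $(t_j)\in\mathfrak{R}$; I want to bound $p_{g,(t_j)}(\varphi\psi)$ by a product of a seminorm of $\varphi$ and a seminorm of $\psi$. First I would choose, via Lemma \ref{3}, a sequence $(s_j)\in\mathfrak{R}$ with $s_j\le t_j$ and $S_{j+k}\le 2^{j+k}S_jS_k$ for all $j,k$. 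Then by the Leibniz rule,
\begin{eqnarray*}
\frac{|g(x)D^{\alpha}(\varphi\psi)(x)|}{T_{\alpha}M_{\alpha}}
\le \sum_{\beta\le\alpha}{\alpha\choose\beta}
\frac{|g(x)D^{\beta}\varphi(x)|\,|D^{\alpha-\beta}\psi(x)|}{T_{\alpha}M_{\alpha}}.
\end{eqnarray*}
The key point is to absorb the factors $T_{\alpha}M_{\alpha}$ in the denominator into a product $T_{\beta}M_{\beta}\cdot T_{\alpha-\beta}M_{\alpha-\beta}$ up to a constant times a geometric factor: using $(M.2)$ one has $M_{\alpha}\le c_0H^{|\alpha|}M_{\beta}M_{\alpha-\beta}$, hence $1/M_{\alpha}\ge (c_0 H^{|\alpha|})^{-1}/(M_{\beta}M_{\alpha-\beta})$ is the wrong direction; instead I use $(M.2)$ the usual way, $M_{\alpha}\le c_0H^{|\alpha|}\min_{\beta}M_{\beta}M_{\alpha-\beta}$ does give $M_{\beta}M_{\alpha-\beta}\le$ something, so more carefully I want an upper bound on the quotient, which is provided by writing the estimate with $(s_j)$: since $S_{|\alpha|}\le 2^{|\alpha|}S_{|\beta|}S_{|\alpha-\beta|}$ and $M_{\beta}M_{\alpha-\beta}\le c_0H^{|\alpha|}M_{\alpha}$ (a direct consequence of $(M.2)$, taking $q=|\beta|$, $p=|\alpha|$), one gets
\begin{eqnarray*}
\frac{S_{\beta}M_{\beta}\,S_{\alpha-\beta}M_{\alpha-\beta}}{S_{\alpha}M_{\alpha}}
\ \ge\ \frac{1}{2^{|\alpha|}c_0H^{|\alpha|}},
\end{eqnarray*}
so that, introducing $(r_j)\in\mathfrak{R}$ with $r_j\le s_j/(2H)$ chosen again by Lemma \ref{3} with the splitting property, one can write
$$\frac{1}{T_{\alpha}M_{\alpha}}\le\frac{c_0}{R_{\beta}M_{\beta}\,R_{\alpha-\beta}M_{\alpha-\beta}}\cdot\frac{1}{2^{|\alpha|}}$$
for all $\beta\le\alpha$ (after also using $r_j\le t_j$). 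This is exactly the form needed.

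Plugging this in, the sum becomes
\begin{eqnarray*}
\frac{|g(x)D^{\alpha}(\varphi\psi)(x)|}{T_{\alpha}M_{\alpha}}
\le c_0\, p_{g,(r_j)}(\varphi)\,\|\psi\|_{(r_j)}\sum_{\beta\le\alpha}{\alpha\choose\beta}\frac{1}{2^{|\alpha|}}
= c_0\, p_{g,(r_j)}(\varphi)\,\|\psi\|_{(r_j)}\,\frac{(1+1)^{|\alpha|}\cdots}{2^{|\alpha|}},
\end{eqnarray*}
where the multinomial sum $\sum_{\beta\le\alpha}{\alpha\choose\beta}=2^{|\alpha|}$ in one dimension and $2^{|\alpha|}$ for the $d$-dimensional multi-index as well, so the geometric factor $2^{-|\alpha|}$ cancels the combinatorial factor and leaves a bound $c_0\, p_{g,(r_j)}(\varphi)\,\|\psi\|_{(r_j)}$ uniform in $\alpha$ and $x$. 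Since $\|\psi\|_{(r_j)}=p_{\mathbf 1,(r_j)}(\psi)$ is dominated by $p_{h,(r_j)}(\psi)$ for any $h\in\mathcal{C}_0$ that is bounded below by a positive constant on a large ball together with the estimate at infinity — or more simply, since the embedding $\tilde{\DD}^{\{M_p\}}_{L^{\infty}}\to\tilde{\tilde{\DD}}^{\{M_p\}}_{L^{\infty}}$ is continuous and $\psi\in\tilde{\DD}^{\{M_p\}}_{L^{\infty}}$, the quantity $\|\psi\|_{(r_j)}$ is a continuous seminorm on $\tilde{\tilde{\DD}}^{\{M_p\}}_{L^{\infty}}$ evaluated on the relevant factor — I would instead symmetrize: bound $|D^{\alpha-\beta}\psi(x)|$ using $g_0(x)^{-1}|g_0(x)D^{\alpha-\beta}\psi(x)|$ is not available since $g_0$ may vanish, so the clean route is to keep $g$ only on the $\varphi$ factor and observe $\|\psi\|_{(r_j)}\le p_{e^{-|x|},(r_j)}(\psi)\cdot\sup e^{|x|}$ fails too; therefore the correct statement uses that $\|\cdot\|_{(r_j)}$ is \emph{not} continuous on $\tilde{\tilde{\DD}}^{\{M_p\}}_{L^{\infty}}$, and one must instead split $g=g_1g_2$ with $g_1,g_2\in\mathcal{C}_0$ (possible since $\mathcal{C}_0$ is an algebra and $g=\sqrt{|g|}\cdot(\sqrt{|g|}\,\mathrm{sgn}\,g)$), distributing $g_1$ onto $\varphi$ and $g_2$ onto $\psi$, giving the genuinely symmetric bound $p_{g,(t_j)}(\varphi\psi)\le c_0\,p_{g_1,(r_j)}(\varphi)\,p_{g_2,(r_j)}(\psi)$.

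The main obstacle is precisely this last point: one cannot simply put all the weight $g$ on one factor because the plain sup-norm seminorm is not continuous in the $\tilde{\tilde{\DD}}$-topology, so the decomposition $g=g_1g_2$ with both factors in $\mathcal{C}_0$ is essential, and one must check that the Leibniz estimate still goes through with $g_1$ and $g_2$ separated — which it does, since $|g(x)|=|g_1(x)||g_2(x)|$ pointwise and the combinatorial bookkeeping above is unaffected. Everything else (smoothness of $\varphi\psi$, the reduction to the single seminorm estimate, the bilinearity) is routine. Hence the bilinear map is separately continuous, and by the estimate $p_{g,(t_j)}(\varphi\psi)\le c_0\,p_{g_1,(r_j)}(\varphi)\,p_{g_2,(r_j)}(\psi)$ it is jointly continuous.
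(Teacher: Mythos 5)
Your proposal is correct and uses exactly the paper's key idea: split the weight as $g=g_1g_2$ with $g_1,g_2\in\mathcal{C}_0$ (the paper takes $\tilde g=\sqrt{|g|}$ on both factors), apply Leibniz, and absorb the binomial sum $\sum_{\beta\le\alpha}\binom{\alpha}{\beta}=2^{|\alpha|}$ by passing to the halved sequence. The only remark is that your detour through Lemma \ref{3} and $(M.2)$ is unnecessary (and $(M.2)$ is momentarily quoted in the wrong direction, though the inequality you actually use is valid): the monotonicity of $(t_j)$ gives $T_{\beta}T_{\alpha-\beta}\le T_{\alpha}$ and $(M.1)$ with $M_0=1$ gives $M_{\beta}M_{\alpha-\beta}\le M_{\alpha}$, which is all the paper needs.
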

\begin{proof}  Let $g\in \mathcal{C}_0$ and $(t_j)\in\mathfrak{R}$. Obviously, $\tilde{g}(x)=\sqrt{|g(x)|}\in \mathcal{C}_0$. Let $\varphi,\psi\in\tilde{\tilde{\DD}}^{\{M_p\}}_{L^{\infty}}$. Then
\begin{eqnarray*}
\frac{\left|g(x)D^{\alpha}\left(\varphi(x)\psi(x)\right)\right|}{2^\alpha T_{\alpha}M_{\alpha}}
&\leq&\frac{1}{2^\alpha} \sum_{\beta\leq\alpha}{\alpha\choose\beta} \frac{|g(x)|\left|D^{\beta}\varphi(x)
\right|\left|D^{\alpha-\beta}\psi(x)\right|}{T_{\alpha}M_{\alpha}}\\
&\leq& C
p_{\tilde{g},(t_j/2)}(\varphi)p_{\tilde{g},(t_j/2)}(\psi),\, x\in\RR^d, \alpha\in\NN^d.
\end{eqnarray*}
\qed
\end{proof}

\begin{proposition}\label{25}
The sets $\tilde{\DD}'^{\{M_p\}}_{L^1}$ and $\left(\tilde{\tilde{\DD}}^{\{M_p\}}_{L^{\infty}}\right)'$ are equal and the inclusion $\left(\tilde{\tilde{\DD}}^{\{M_p\}}_{L^{\infty}}\right)'\longrightarrow\tilde{\DD}'^{\{M_p\}}_{L^1}$ is continuous.
\end{proposition}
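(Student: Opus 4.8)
The plan is to show the two containments $\tilde{\DD}'^{\{M_p\}}_{L^1}\subseteq\left(\tilde{\tilde{\DD}}^{\{M_p\}}_{L^{\infty}}\right)'$ and $\left(\tilde{\tilde{\DD}}^{\{M_p\}}_{L^{\infty}}\right)'\subseteq\tilde{\DD}'^{\{M_p\}}_{L^1}$ separately, the latter together with continuity of the inclusion. Recall that $\tilde{\DD}'^{\{M_p\}}_{L^1}$ is by definition the strong dual of $\dot{\tilde{\mathcal{B}}}^{\{M_p\}}$, which is the closure of $\DD^{\{M_p\}}$ in $\tilde{\DD}^{\{M_p\}}_{L^{\infty}}$, so every $T\in\tilde{\DD}'^{\{M_p\}}_{L^1}$ is in particular a continuous linear functional on $\DD^{\{M_p\}}$, i.e. an element of $\DD'^{\{M_p\}}$. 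Both $\tilde{\DD}'^{\{M_p\}}_{L^1}$ and $\left(\tilde{\tilde{\DD}}^{\{M_p\}}_{L^{\infty}}\right)'$ will thus be viewed as subsets of $\DD'^{\{M_p\}}$ (for the second, this is exactly Lemma \ref{15}), which is what makes the claimed equality meaningful.

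For the inclusion $\left(\tilde{\tilde{\DD}}^{\{M_p\}}_{L^{\infty}}\right)'\subseteq\tilde{\DD}'^{\{M_p\}}_{L^1}$: since the identity map $\tilde{\DD}^{\{M_p\}}_{L^{\infty}}\to\tilde{\tilde{\DD}}^{\{M_p\}}_{L^{\infty}}$ is continuous (noted just before Lemma \ref{10}), any $T\in\left(\tilde{\tilde{\DD}}^{\{M_p\}}_{L^{\infty}}\right)'$ restricts to a continuous functional on $\tilde{\DD}^{\{M_p\}}_{L^{\infty}}$, hence on its subspace $\DD^{\{M_p\}}$ and, by continuity, on the closure $\dot{\tilde{\mathcal{B}}}^{\{M_p\}}$; so $T\in\tilde{\DD}'^{\{M_p\}}_{L^1}$. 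Continuity of the inclusion follows because this restriction map $\left(\tilde{\tilde{\DD}}^{\{M_p\}}_{L^{\infty}}\right)'\to\tilde{\DD}'^{\{M_p\}}_{L^1}$ is the transpose of a continuous map with dense range (density of $\DD^{\{M_p\}}$, hence of $\dot{\tilde{\mathcal{B}}}^{\{M_p\}}$, in $\tilde{\tilde{\DD}}^{\{M_p\}}_{L^{\infty}}$ by Lemma \ref{15}), so it is continuous for the strong topologies — equicontinuous sets and bounded sets behave correctly under transposition of a map with dense range.

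The substantive direction is $\tilde{\DD}'^{\{M_p\}}_{L^1}\subseteq\left(\tilde{\tilde{\DD}}^{\{M_p\}}_{L^{\infty}}\right)'$: given $T\in\tilde{\DD}'^{\{M_p\}}_{L^1}$, one must extend it from $\dot{\tilde{\mathcal{B}}}^{\{M_p\}}$ to all of $\tilde{\tilde{\DD}}^{\{M_p\}}_{L^{\infty}}$ and show the extension is continuous for the $p_{g,(t_j)}$ seminorms. The natural route, following the Schwartz/Wagner picture cited, is a structural representation: I would show that every $T\in\tilde{\DD}'^{\{M_p\}}_{L^1}$ can be written as a finite sum $T=\sum_{\alpha}D^{\alpha}f_{\alpha}$ (or an ultradifferential-operator-type series $\sum_\alpha c_\alpha D^\alpha f_\alpha$) with $f_{\alpha}\in L^1(\RR^d)$ — more precisely with $f_\alpha$ lying in a weighted $L^1$ space matching the $\mathfrak{R}$-type growth built into $\dot{\tilde{\mathcal{B}}}^{\{M_p\}}$. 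Such a representation is obtained by the standard argument: $T$ is bounded on some neighborhood $\{\varphi:\|\varphi\|_{(t_j)}\le 1\}$, i.e. $|\langle T,\varphi\rangle|\le C\sup_{\alpha,x}|D^\alpha\varphi(x)|/(T_\alpha M_\alpha)$, so $T$ factors through the map $\varphi\mapsto(D^\alpha\varphi/(T_\alpha M_\alpha))_\alpha$ into a subspace of $c_0(\NN^d; \mathcal{C}_0)$ (using Lemma \ref{5} to see the image lands in functions vanishing at infinity), and Hahn–Banach plus the Riesz representation of $(\mathcal{C}_0)'=\mathcal{M}$ (bounded measures) gives a family of measures $\mu_\alpha$, summable in total variation, with $\langle T,\varphi\rangle=\sum_\alpha\int (D^\alpha\varphi/(T_\alpha M_\alpha))\,d\mu_\alpha$. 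Then for $\varphi\in\tilde{\tilde{\DD}}^{\{M_p\}}_{L^{\infty}}$ one defines $\langle T,\varphi\rangle$ by the same formula; convergence and the estimate $|\langle T,\varphi\rangle|\le C'\,p_{g,(s_j)}(\varphi)$ for a suitable $g\in\mathcal{C}_0$ (built from the measures $\mu_\alpha$, e.g. $g$ comparable to the $L^1$-density obtained by absorbing $T_\alpha M_\alpha$ and the total masses) and suitable $(s_j)\in\mathfrak{R}$ follow from reindexing the double supremum/sum exactly as in the proofs of Lemmas \ref{10} and \ref{20}, invoking Lemma \ref{3} to linearize the products $T_{\alpha+\beta}$. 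One must also check that this extension is consistent with the original $T$ on $\DD^{\{M_p\}}$ (clear, same formula) and well-defined independent of the representation (it agrees with $T$ on the dense subspace $\DD^{\{M_p\}}$ and both sides are $p_{g,(s_j)}$-continuous, hence equal). The main obstacle is exactly producing the weighted-$L^1$ (measure) representation with the correct interaction between the Roumieu $\mathfrak{R}$-weights $T_\alpha$ and the $\mathcal{C}_0$-weight $g$: one has to choose the auxiliary sequence $(t_j)$, split it via Lemma \ref{3}, and package the countably many total masses $\|\mu_\alpha\|$ into a single continuous-vanishing-at-infinity weight $g$ so that a single seminorm $p_{g,(s_j)}$ dominates — this is where the Roumieu case genuinely differs from the Beurling/Schwartz case and where care with the $\min$ in $(M.2)$ and the constant $H$ is needed.
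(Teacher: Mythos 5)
Your proposal is correct in outline, but for the substantive inclusion $\tilde{\DD}'^{\{M_p\}}_{L^1}\subseteq\left(\tilde{\tilde{\DD}}^{\{M_p\}}_{L^{\infty}}\right)'$ it takes a genuinely different route from the paper. The paper does not re-derive a structural representation by hand: it invokes the known first structure theorem for $\tilde{\DD}'^{\{M_p\}}_{L^1}$ (theorem 1 of the cited Pilipovi\'c paper), writing $T=P(D)F_1+F_2$ with $P(D)$ an ultradifferential operator of class $\{M_p\}$ and $F_1,F_2\in L^1$; it then applies the Wagner-type lemma (prop.\ 1.2.1) to each $F_i$ separately to produce a single weight $g_i\in\mathcal{C}_0$, and --- crucially --- all of the delicate bookkeeping between the Roumieu weights $T_\alpha$, the constant $H$ from $(M.2)$, and Lemma \ref{3} is already packaged into Lemma \ref{10}, which says $P(-D)$ acts continuously on $\tilde{\tilde{\DD}}^{\{M_p\}}_{L^{\infty}}$; the estimate $|\langle T,\varphi\rangle|\le C\,p_{\tilde g,(t'_j)}(\varphi)$ then drops out in two lines, and Lemma \ref{15} finishes. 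Your route instead reproves a representation $\langle T,\varphi\rangle=\sum_\alpha\int\bigl(D^\alpha\varphi/(T_\alpha M_\alpha)\bigr)\,d\mu_\alpha$ via the embedding into $c_0(\NN^d;\mathcal{C}_0)$, Hahn--Banach and Riesz; this is sound (the directedness of the seminorm family $\|\cdot\|_{(t_j)}$ over $\mathfrak{R}$ does give boundedness by a single seminorm, and Lemma \ref{5} together with $\|\varphi\|_{(t_j/2)}<\infty$ does put the image in $c_0(\NN^d;\mathcal{C}_0)$), and it is more self-contained, but it forces you to redo exactly the step the paper outsources: producing one $g\in\mathcal{C}_0$ that works for all $\mu_\alpha$ simultaneously. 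You gesture at this ("package the total masses into a single weight") but the clean way to do it is to apply the Wagner lemma once to the finite positive measure $\sum_\alpha|\mu_\alpha|$ and absorb the summability over $\alpha$ into the factor $2^{-|\alpha|}$ gained by passing from $(t_j)$ to $(t_j/2)$; with that made explicit your argument closes. In short: same spirit (structure theorem plus a $\mathcal{C}_0$-weight for integrable measures), different decomposition ($P(D)F_1+F_2$ versus a series of derivatives of measures), with the paper's version shorter because the hard analytic step is delegated to the cited structure theorem and to Lemma \ref{10}.
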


\begin{proof} Since, $\dot{\tilde{\mathcal{B}}}^{\{M_p\}}$ is continuously and densely injected in $\tilde{\tilde{\DD}}^{\{M_p\}}_{L^{\infty}}$,  it follows that the injection $\left(\tilde{\tilde{\DD}}^{\{M_p\}}_{L^{\infty}}\right)'\longrightarrow \tilde{\DD}'^{\{M_p\}}_{L^1}$ is continuous. Let $T\in \tilde{\DD}'^{\{M_p\}}_{L^1}$. Then, by theorem 1 of \cite{PilipovicT}, there exist an ultradifferential operator $P(D)$, of class $\{M_p\}$ and $F_1,F_2\in L^1$ such that $T=P(D)F_1+F_2$. Let $\varphi\in \DD^{\{M_p\}}$. Then
\begin{eqnarray*}
|\langle P(D)F_1,\varphi\rangle|=|\langle F_1,P(-D)\varphi\rangle|=\left|\int_{\RR^d}F_1(x)P(-D)\varphi(x)dx\right|.
\end{eqnarray*}
Because $F_1\in L^1\subseteq \mathcal{M}^1$ (integrable measures), by proposition 1.2.1. of \cite{Wagner}, there exists $g_1\in \mathcal{C}_0$ such that $\ds\left|\int_{\RR^d}F_1(x)f(x)dx\right|\leq \|fg_1\|_{L^{\infty}}$, for all $f\in\mathcal{BC}$ ($\mathcal{BC}$ is the space of continuous bounded functions on $\RR^d$). Let $(t_j)\in\mathfrak{R}$. We obtain, by  lemma \ref{10}, that for some $\tilde{g}_1\in \mathcal{C}_0$, $(t'_j)\in\mathfrak{R}$ and $C_1>0$,
\begin{eqnarray*}
|\langle P(D)F_1,\varphi\rangle|\leq \|g_1P(-D)\varphi\|_{L^{\infty}}\leq p_{g_1,(t_j)}(P(-D)\varphi)\leq C_1p_{\tilde{g}_1,(t'_j)}(\varphi).
\end{eqnarray*}
 Similarly, there exist $\tilde{g}_2\in \mathcal{C}_0$, $(t''_j)\in\mathfrak{R}$ and $C_2>0$ such that $|\langle F_2,\varphi\rangle|\leq C_2p_{\tilde{g}_2,(t''_j)}(\varphi)$, for all $\varphi\in\DD^{\{M_p\}}$. By lemma \ref{15}, $T\in\left(\tilde{\tilde{\DD}}^{\{M_p\}}_{L^{\infty}}\right)'$.\qed
\end{proof}

\begin{lemma}\label{27}
Let $S,T\in\DD'^{\{M_p\}}\left(\RR^d\right)$ are such that, for every $\varphi\in\DD^{\{M_p\}}\left(\RR^d\right)$, $(S\otimes T) \varphi^{\Delta}\in\tilde{\DD}'^{\{M_p\}}_{L^1}\left(\RR^{2d}\right)$. Then
$F:\DD^{\{M_p\}}\left(\RR^d\right)\longrightarrow \left(\tilde{\tilde{\DD}}^{\{M_p\}}_{L^{\infty}}\right)'\left(\RR^{2d}\right)$ defined by $F(\varphi)=(S\otimes T)\varphi^{\Delta}$ is linear and continuous.
\end{lemma}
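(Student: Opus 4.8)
The plan is to prove continuity by reducing it to scalar continuity statements, so as to avoid working directly with the (non-metrizable) topology of $\tilde{\tilde{\DD}}^{\{M_p\}}_{L^{\infty}}\left(\RR^{2d}\right)$. That $F$ is linear is clear. That $F$ takes values in the stated target is also immediate: by hypothesis $F(\varphi)=(S\otimes T)\varphi^{\Delta}\in\tilde{\DD}'^{\{M_p\}}_{L^1}\left(\RR^{2d}\right)$, and by Proposition \ref{25} the space $\tilde{\DD}'^{\{M_p\}}_{L^1}\left(\RR^{2d}\right)$ has the same underlying vector space as $\left(\tilde{\tilde{\DD}}^{\{M_p\}}_{L^{\infty}}\right)'\left(\RR^{2d}\right)$.

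First I would record a general reduction. The space $\DD^{\{M_p\}}\left(\RR^d\right)$ is barrelled, being an inductive limit (over the compacts $K_n$, and over $h$) of the Banach spaces $\DD^{\{M_p\},h}_{K_n}$. Now for any barrelled l.c.s. $E$ and any l.c.s. $Y$, a linear map $F\colon E\to Y'_{\beta}$ into the strong dual is continuous provided $\varphi\mapsto\langle F(\varphi),\psi\rangle$ is a continuous linear functional on $E$ for every fixed $\psi\in Y$: indeed, for each bounded $A\subseteq Y$ the family $\{\varphi\mapsto\langle F(\varphi),\psi\rangle : \psi\in A\}$ is then a pointwise bounded family of continuous linear functionals, hence equicontinuous, so $F^{-1}(A^{\circ})$ is a neighbourhood of zero; as the polars $A^{\circ}$ of bounded sets form a neighbourhood base of $Y'_{\beta}$, $F$ is continuous. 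Applying this with $E=\DD^{\{M_p\}}\left(\RR^d\right)$ and $Y=\tilde{\tilde{\DD}}^{\{M_p\}}_{L^{\infty}}\left(\RR^{2d}\right)$, it remains only to fix $\psi\in\tilde{\tilde{\DD}}^{\{M_p\}}_{L^{\infty}}\left(\RR^{2d}\right)$ and show that $\varphi\mapsto\langle(S\otimes T)\varphi^{\Delta},\psi\rangle$ is continuous from $\DD^{\{M_p\}}\left(\RR^d\right)$ into $\CC$.

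For $\psi=\chi\in\DD^{\{M_p\}}\left(\RR^{2d}\right)$ this is elementary: $\langle(S\otimes T)\varphi^{\Delta},\chi\rangle=\langle S\otimes T,\varphi^{\Delta}\chi\rangle$, the map $\varphi\mapsto\varphi^{\Delta}$ is continuous from $\DD^{\{M_p\}}\left(\RR^d\right)$ into $\EE^{\{M_p\}}\left(\RR^{2d}\right)$ (the image of a compact set under $(x,y)\mapsto x+y$ is compact), multiplication by the fixed $\chi$ is continuous $\EE^{\{M_p\}}\left(\RR^{2d}\right)\to\DD^{\{M_p\}}\left(\RR^{2d}\right)$, and $S\otimes T$ is a continuous functional on $\DD^{\{M_p\}}\left(\RR^{2d}\right)$. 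For a general $\psi$ I would use Lemma \ref{15}: pick $\psi_n\in\DD^{\{M_p\}}\left(\RR^{2d}\right)$ with $\psi_n\to\psi$ in $\tilde{\tilde{\DD}}^{\{M_p\}}_{L^{\infty}}$. Since $F(\varphi)\in\left(\tilde{\tilde{\DD}}^{\{M_p\}}_{L^{\infty}}\right)'$, we get $\langle F(\varphi),\psi_n\rangle\to\langle F(\varphi),\psi\rangle$ for every $\varphi$, where for the test functions $\psi_n$ the dual pairing agrees with the distributional one (by Lemma \ref{15}, the inclusion $\left(\tilde{\tilde{\DD}}^{\{M_p\}}_{L^{\infty}}\right)'\hookrightarrow\DD'^{\{M_p\}}$ being the natural one). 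Hence $\varphi\mapsto\langle F(\varphi),\psi\rangle$ is the pointwise limit of the continuous linear functionals $\varphi\mapsto\langle F(\varphi),\psi_n\rangle$ from the previous case, and by the Banach--Steinhaus theorem on the barrelled space $\DD^{\{M_p\}}\left(\RR^d\right)$ it is itself continuous. Together with the reduction, this gives continuity of $F$.

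The main difficulty is conceptual rather than computational: since $\tilde{\tilde{\DD}}^{\{M_p\}}_{L^{\infty}}\left(\RR^{2d}\right)$ is not metrizable, one cannot simply invoke a closed graph theorem for metrizable spaces, and must instead lean on the barrelledness of $\DD^{\{M_p\}}$ and on the sequential density of $\DD^{\{M_p\}}$ in $\tilde{\tilde{\DD}}^{\{M_p\}}_{L^{\infty}}$ (Lemma \ref{15}) to push the whole question down to the behaviour against ordinary test functions, where it becomes routine.
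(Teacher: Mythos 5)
Your proof is correct, but it takes a genuinely different route from the paper's. The paper exploits that $\DD^{\{M_p\}}\left(\RR^d\right)$ is \emph{bornological}, so that continuity of $F$ reduces to showing that $F$ maps bounded sets to bounded sets; for a bounded $B\subseteq\DD^{\{M_p\},h}_K$ it then picks a single cutoff $\chi\in\DD^{\{M_p\}}$ equal to $1$ on $K$, rewrites $\left\langle (S\otimes T)\varphi^{\Delta},\psi\right\rangle=\left\langle (S\otimes T)\chi^{\Delta},\varphi^{\Delta}\psi\right\rangle$, and proves by an explicit Leibniz estimate that $\left\{\varphi^{\Delta}\psi:\varphi\in B,\ \psi\in B_1\right\}$ is bounded in $\tilde{\tilde{\DD}}^{\{M_p\}}_{L^{\infty}}\left(\RR^{2d}\right)$ whenever $B_1$ is bounded there, so that the fixed functional $(S\otimes T)\chi^{\Delta}$ is uniformly bounded on this set. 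You instead exploit that $\DD^{\{M_p\}}\left(\RR^d\right)$ is \emph{barrelled} and apply Banach--Steinhaus twice: once to reduce strong continuity of $F$ to continuity of the scalar maps $\varphi\mapsto\langle F(\varphi),\psi\rangle$ for each fixed $\psi$, and once more, together with the sequential density of Lemma \ref{15}, to reduce that to the elementary case $\psi\in\DD^{\{M_p\}}\left(\RR^{2d}\right)$. Both arguments are sound and both ultimately rest on the same two ingredients (Proposition \ref{25} to place $F(\varphi)$ in the right dual, and Lemma \ref{15} for density); the trade-off is that the paper's version is more computational but self-contained, producing a concrete seminorm bound $\left|\left\langle F(\varphi),\psi\right\rangle\right|\leq C\,p_{K,h}(\varphi)\,p_{g,(t_j/2)}(\psi)$ via one fixed cutoff, whereas yours is softer and shorter, at the cost of invoking equicontinuity theorems and the (standard, but unstated in the paper) continuity of $\EE^{\{M_p\}}\left(\RR^{2d}\right)\times\DD^{\{M_p\}}_{K}\to\DD^{\{M_p\}}_{K}$ under multiplication. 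One small point worth making explicit in your write-up: the identification of the dual pairing $\langle F(\varphi),\psi_n\rangle$ with the distributional pairing $\left\langle S\otimes T,\varphi^{\Delta}\psi_n\right\rangle$ for $\psi_n\in\DD^{\{M_p\}}$ uses that the element of $\left(\tilde{\tilde{\DD}}^{\{M_p\}}_{L^{\infty}}\right)'$ provided by Proposition \ref{25} is the unique continuous extension of the distribution $(S\otimes T)\varphi^{\Delta}$, which is where the density statement of Lemma \ref{15} enters a second time; you gesture at this correctly, so it is a presentational rather than a mathematical gap.
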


\begin{proof} By proposition \ref{25}, $(S\otimes T) \varphi^{\Delta}\in\left(\tilde{\tilde{\DD}}^{\{M_p\}}_{L^{\infty}}\right)'\left(\RR^{2d}\right)$ for every $\varphi\in\DD^{\{M_p\}}\left(\RR^d\right)$. Because $\DD^{\{M_p\}}$ is bornologic, it is enough to prove that $F$ maps bounded sets into bounded sets. Let $B$ be a bounded set in $\DD^{\{M_p\}}\left(\RR^d\right)$. Then, there exist $K\subset\subset\RR^d$ and $h>0$ such that $B\subseteq\DD^{\{M_p\},h}_K$ and $B$ is bounded there. It is obvious that, without losing generality, we can assume that $K=K_{\RR^d}(0,q)$, for some $q>0$. Take $\chi\in\DD^{\{M_p\}}$ such that $\chi=1$ on $K$ and $0$ outside some bounded neighborhood of $K$. Then, for $\varphi\in B$ and $\psi\in\tilde{\tilde{\DD}}^{\{M_p\}}_{L^{\infty}}\left(\RR^{2d}\right)$, we have
\begin{eqnarray*}
\left\langle (S\otimes T)\varphi^{\Delta},\psi\right\rangle=\left\langle (S\otimes T) \chi^{\Delta}\varphi^{\Delta},\psi\right\rangle=
\left\langle (S\otimes T)\chi^{\Delta},\varphi^{\Delta}\psi\right\rangle,
\end{eqnarray*}
where, in the last equality, we used that $(S\otimes T)\chi^{\Delta}\in\left(\tilde{\tilde{\DD}}^{\{M_p\}}_{L^{\infty}}\right)'\left(\RR^{2d}\right)$ and $\varphi^{\Delta}\in \tilde{\tilde{\DD}}^{\{M_p\}}_{L^{\infty}}\left(\RR^{2d}\right)$ when $\varphi\in\DD^{\{M_p\}}\left(\RR^d\right)$. Let $\psi\in B_1$ for some bounded set $B_1$ in $\tilde{\tilde{\DD}}^{\{M_p\}}_{L^{\infty}}\left(\RR^{2d}\right)$. Let $g\in \mathcal{C}_0\left(\RR^{2d}\right)$ and $(t_j)\in\mathfrak{R}$. Then, for $\varphi\in B$ and $\psi\in B_1$, we have\\
$\ds \frac{\left|g(x,y)D^{\alpha}_x D^{\beta}_y \left(\varphi^{\Delta}(x,y)\psi(x,y)\right)\right|}{T_{\alpha+\beta}M_{\alpha+\beta}}$
\begin{eqnarray*}
&\leq& \sum_{\substack{\gamma\leq\alpha\\ \delta\leq\beta}}{\alpha\choose\gamma}{\beta\choose\delta} \frac{|g(x,y)|\left|D^{\gamma+\delta} \varphi(x+y)\right|\left|D^{\alpha-\gamma}_x D^{\beta-\delta}_y\psi(x,y)\right|}{T_{\alpha+\beta}M_{\alpha+\beta}}\\
&\leq&p_{K,h}(\varphi) p_{g,(t_j/2)}(\psi)\sum_{\substack{\gamma\leq\alpha\\ \delta\leq\beta}} {\alpha\choose\gamma}{\beta\choose\delta} \frac{(2h)^{|\gamma|+|\delta|}}{2^{|\alpha|+|\beta|}T_{\gamma+\delta}}\leq Cp_{K,h}(\varphi) p_{g,(t_j/2)}(\psi).
\end{eqnarray*}
Since $p_{K,h}(\varphi)$ and $p_{g,(t_j/2)}(\psi)$ are bounded when $\varphi\in B$ and $\psi\in B_1$,  the set\newline $\left\{\theta\in\tilde{\tilde{\DD}}^{\{M_p\}}_{L^{\infty}}\left(\RR^{2d}\right)|\theta=\varphi^{\Delta}\psi,\, \varphi\in B,\, \psi\in B_1\right\}$ is bounded in $\tilde{\tilde{\DD}}^{\{M_p\}}_{L^{\infty}}\left(\RR^{2d}\right)$. This implies that $\left\langle (S\otimes T) \varphi^{\Delta},\psi\right\rangle=\left\langle (S\otimes T)\chi^{\Delta},\varphi^{\Delta}\psi\right\rangle$ is bounded, for $\varphi\in B$ and $\psi\in B_1$. Thus, $F(\varphi)=(S\otimes T)\varphi^{\Delta}$, $\varphi\in B$ is bounded.\qed
\end{proof}

\begin{definition}
Let $S,T\in\DD'^{\{M_p\}}\left(\RR^d\right)$ are such that for every $\varphi\in\DD^{\{M_p\}}\left(\RR^d\right)$, $(S\otimes T)\varphi^{\Delta}\in\tilde{\DD}'^{\{M_p\}}_{L^1}\left(\RR^{2d}\right)$. Define the convolution of $S$ and $T$, $S*T\in\DD'^{\{M_p\}}\left(\RR^d\right)$, by
\begin{eqnarray*}
\langle S*T,\varphi\rangle={}_{\left(\tilde{\tilde{\DD}}^{\{M_p\}}_{L^{\infty}}\right)'}\left\langle(S\otimes T) \varphi^{\Delta},1\right\rangle_{\tilde{\tilde{\DD}}^{\{M_p\}}_{L^{\infty}}}; \, (1(x)=1\in \tilde{\tilde{\DD}}^{\{M_p\}}_{L^{\infty}}).
\end{eqnarray*}
\end{definition}
For every $a>0$, define the space $\dot{\mathcal{B}}^{\{M_p\}}_a=\left\{\varphi\in \dot{\tilde{\mathcal{B}}}^{\{M_p\}}\left(\RR^{2d}\right)|\mathrm{supp\,} \varphi\subseteq \Delta_a\right\}$, where $\Delta_a=\left\{(x,y)\in\RR^{2d}||x+y|\leq a\right\}$. With the seminorms $\|\varphi\|_{(t_j)} $ (now over $\RR^{2d}$), $\dot{\mathcal{B}}^{\{M_p\}}_a$ becomes a l.c.s. Define the space $\ds \dot{\mathcal{B}}^{\{M_p\}}_{\Delta}=\lim_{\substack{\longrightarrow\\ a\rightarrow\infty}}\dot{\mathcal{B}}^{\{M_p\}}_a$, where the inductive limit is strict; $\dot{\mathcal{B}}^{\{M_p\}}_{\Delta}$ is a l.c.s. because we have a continuous inclusion $\dot{\mathcal{B}}^{\{M_p\}}_{\Delta}\longrightarrow\EE^{\{M_p\}}$.

\begin{lemma} \label{123}
Let $a>0$. Then $\DD^{\{M_p\}}_{\Delta_a}\left(\RR^{2d}\right)=\left\{\varphi\in\DD^{\{M_p\}}\left(\RR^{2d}\right)|\mathrm{supp\,} \varphi\subseteq \Delta_a\right\}$ is sequentially dense in $\dot{\mathcal{B}}^{\{M_p\}}_a$.
\end{lemma}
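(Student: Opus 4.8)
The plan is to mimic the proof of Lemma \ref{5}, since $\dot{\mathcal{B}}^{\{M_p\}}_a$ is just the closed subspace of $\dot{\tilde{\mathcal{B}}}^{\{M_p\}}\left(\RR^{2d}\right)$ consisting of functions supported in $\Delta_a$, and $\DD^{\{M_p\}}_{\Delta_a}$ sits inside it. First I would fix $\varphi\in\dot{\mathcal{B}}^{\{M_p\}}_a$; by Lemma \ref{5} there is a sequence in $\DD^{\{M_p\}}\left(\RR^{2d}\right)$ converging to $\varphi$ in $\tilde{\DD}^{\{M_p\}}_{L^{\infty}}$, but those approximants need not be supported in $\Delta_a$, so the naive approach fails and one must truncate more carefully. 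The key point is that $\varphi$ is already supported in the (unbounded) closed set $\Delta_a$, and to land in $\DD^{\{M_p\}}_{\Delta_a}$ one only needs to cut off in the directions \emph{along} $\Delta_a$, which does not move the support out of $\Delta_a$.

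Concretely, take $\chi\in\DD^{\{M_p\}}\left(\RR^d\right)$ with $\chi=1$ on $K_{\RR^d}(0,1)$ and $\chi=0$ outside $K_{\RR^d}(0,2)$, so that $\left|D^{\alpha}\chi\right|\leq C_1h^{|\alpha|}M_{\alpha}$ for suitable $h,C_1>0$. For $n\in\ZZ_+$ set $\chi_n(x,y)=\chi\bigl((x-y)/n\bigr)$ and $\varphi_n=\chi_n\varphi$. Since $x\mapsto\chi(x/n)$ lies in $\DD^{\{M_p\}}$ and $\varphi\in\dot{\tilde{\mathcal{B}}}^{\{M_p\}}\left(\RR^{2d}\right)$ has the decay of Lemma \ref{5}, Lemma \ref{7} gives $\varphi_n\in\dot{\tilde{\mathcal{B}}}^{\{M_p\}}\left(\RR^{2d}\right)$; moreover on the support of $\varphi$ we have $|x+y|\leq a$, and on the support of $\chi_n$ we have $|x-y|\leq 2n$, hence $|x|,|y|\leq (a+2n)/2$, so $\varphi_n$ has compact support and thus $\varphi_n\in\DD^{\{M_p\}}_{\Delta_a}$. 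It remains to show $\varphi_n\to\varphi$ in $\dot{\mathcal{B}}^{\{M_p\}}_a$, i.e. in each norm $\|\cdot\|_{(t_j)}$ over $\RR^{2d}$. I would estimate $\left|D^{\gamma}(\varphi-\varphi_n)(x,y)\right|/(T_{\gamma}M_{\gamma})$, $\gamma\in\NN^{2d}$, by the Leibniz rule exactly as in Lemma \ref{5}: the term with all derivatives on $\varphi$ is bounded by $\bigl|1-\chi((x-y)/n)\bigr|$ times $\left|D^{\gamma}\varphi\right|/(T_{\gamma}M_{\gamma})$, which is $<\varepsilon$ once $n$ is large because the region where $\chi((x-y)/n)\neq 1$ meets $\supp\varphi$ only outside a large compact set (here the decay from Lemma \ref{5}, together with $|x-y|> n$ and $|x+y|\le a$ forcing $|x|,|y|\to\infty$, is used); the remaining terms carry at least one derivative of $\chi_n$, each producing a factor $1/n$ times a convergent sum, and are bounded by $C\|\varphi\|_{(t_j/2)}/n$ after passing to $(t_j/2)$ and using the $2^{-|\gamma|}$ gain as in Lemma \ref{5}.

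The main obstacle is organizing the Leibniz estimate so that the cutoff argument genuinely uses that we only cut in the $x-y$ variable: one must check that on $\supp\varphi\cap\{|x-y|>n\}$ the point $(x,y)$ is forced out to infinity in $\RR^{2d}$, so that Lemma \ref{5} applies to make the first term small; this is where $|x+y|\le a$ is essential. Everything else — the bound on derivatives of $\chi_n$, the reindexing of the binomial sum, the choice of $(t_j/2)$ — is routine and parallels Lemma \ref{5}. Sequential density (rather than mere density) is then immediate since the $\varphi_n$ form a sequence. \qed
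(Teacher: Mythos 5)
Your proof is correct and is essentially the paper's argument: both multiply $\varphi$ by dilated cutoff functions and run the Leibniz estimate of Lemma \ref{5}, using the decay characterization of elements of $\dot{\tilde{\mathcal{B}}}^{\{M_p\}}$ to control the term in which no derivative falls on the cutoff, and an $O(1/n)$ bound for the rest. The one place you deviate is based on a misconception: you truncate in the variable $x-y$ because you fear the ``naive'' approximants might leave $\Delta_a$, but multiplication by a cutoff can only shrink the support, so the plain truncation $\varphi_n(x,y)=\chi(x/n,y/n)\varphi(x,y)$ used in the paper already belongs to $\DD^{\{M_p\}}_{\Delta_a}$ --- no extra care is needed to stay inside $\Delta_a$. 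Your variant nevertheless works (the observation that $|x+y|\leq a$ together with $|x-y|\leq 2n$ forces compact support is correct, and on $\supp\varphi\cap\{|x-y|>n\}$ the point $(x,y)$ is indeed pushed to infinity, so Lemma \ref{5} applies); it is simply a more roundabout route to the same estimate.
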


\begin{proof} Let $\varphi\in \dot{\mathcal{B}}^{\{M_p\}}_a$. Take $\chi\in\DD^{\{M_p\}}\left(\RR^{2d}\right)$ such that $\chi(x,y)=1$ on $K_{\RR^{2d}}(0,1)$
and $\chi(x,y)=0$ out of $K_{\RR^{2d}}(0,2)$.  For $n\in\ZZ_+$, put $\chi_n(x,y)=\chi(x/n,y/n)$. Then $\varphi_n=\chi_n\varphi\in\DD^{\{M_p\}}_{\Delta_a}\left(\RR^{2d}\right)$ for all $n\in \ZZ_+$. Let $(t_j)\in\mathfrak{R}$. We have\\
$\ds\frac{\left|D^{\alpha}_x D^{\beta}_y\varphi(x,y)-D^{\alpha}_x D^{\beta}_y\varphi_n(x,y)\right|}{T_{\alpha+\beta}M_{\alpha+\beta}}$
\begin{eqnarray*}
&\leq& \left|1-\chi(x/n,y/n)\right|\frac{\left|D^{\alpha}_x D^{\beta}_y\varphi(x,y)\right|} {T_{\alpha+\beta}M_{\alpha+\beta}}\\
&{}&\,\,\,+\sum_{\substack{\gamma\leq\alpha\\ \delta\leq \beta\\ \gamma+\delta\neq 0}} {\alpha\choose\gamma}{\beta\choose\delta}\frac{\left|D^{\gamma}_x D^{\delta}_y \chi(x/n,y/n)\right| \left|D^{\alpha-\gamma}_x D^{\beta-\delta}_y\varphi(x,y)\right|} {n^{|\gamma|+|\delta|}T_{\alpha+\beta}M_{\alpha+\beta}}\\
&\leq&\left|1-\chi(x/n,y/n)\right|\frac{\left|D^{\alpha}_x D^{\beta}_y\varphi(x,y)\right|} {T_{\alpha+\beta}M_{\alpha+\beta}}+\frac{C_1C_2\|\varphi\|_{(t_j/2)}}{n}
\end{eqnarray*}
 By lemma \ref{5} and by the way we chose $\chi$, it follows that the above two terms tend to zero uniformly in $(x,y)\in\RR^{2d}$ and $\alpha,\beta\in\NN^d$ when $n\rightarrow\infty$.\qed
\end{proof}

\noindent Because $\ds\DD^{\{M_p\}}\left(\RR^{2d}\right)=\bigcup_{a\in\RR_+}\DD^{\{M_p\}}_{\Delta_a}\left(\RR^{2d}\right)$, by  lemma \ref{123}, it follows that $\DD^{\{M_p\}}\left(\RR^{2d}\right)$ is dense in $\dot{\mathcal{B}}^{\{M_p\}}_{\Delta}$. Moreover, one easily checks that the inclusions $\dot{\mathcal{B}}^{\{M_p\}}_{\Delta}\longrightarrow\EE^{\{M_p\}}\left(\RR^{2d}\right)$ and $\DD^{\{M_p\}}\left(\RR^{2d}\right)\longrightarrow\dot{\mathcal{B}}^{\{M_p\}}_{\Delta}$ are continuous, hence, the inclusion $\left(\dot{\mathcal{B}}^{\{M_p\}}_{\Delta}\right)'\longrightarrow\DD'^{\{M_p\}}\left(\RR^{2d}\right)$ is continuous ($\left(\dot{\mathcal{B}}^{\{M_p\}}_{\Delta}\right)'$ is the strong dual of $\dot{\mathcal{B}}^{\{M_p\}}_{\Delta}$).

\begin{theorem}
Let $S,T\in\DD'^{\{M_p\}}\left(\RR^d\right)$. The following statements are equivalent:
\begin{itemize}
\item[i)] the convolution of $S$ and $T$ exists;
\item[ii)] $S\otimes T\in \left(\dot{\mathcal{B}}^{\{M_p\}}_{\Delta}\right)'$;
\item[iii)] for all $\varphi\in\DD^{\{M_p\}}\left(\RR^d\right)$, $\left(\varphi*\check{S}\right)T\in\tilde{\DD}'^{\{M_p\}}_{L^1}\left(\RR^d\right)$ and for every compact subset $K$ of $\RR^d$,  $(\varphi,\chi)\mapsto \left\langle\left(\varphi*\check{S}\right)T,\chi\right\rangle$, $\DD^{\{M_p\}}_K\times\dot{\tilde{\mathcal{B}}}^{\{M_p\}}\left(\RR^d\right)\longrightarrow \CC$, is a  continuous
bilinear mapping;
\item[iv)] for all $\varphi\in\DD^{\{M_p\}}\left(\RR^d\right)$, $\left(\varphi*\check{T}\right)S\in\tilde{\DD}'^{\{M_p\}}_{L^1}\left(\RR^d\right)$ and for every compact subset $K$ of $\RR^d$,
$(\varphi,\chi)\mapsto \left\langle\left(\varphi*\check{T}\right)S,\chi\right\rangle$, $\DD^{\{M_p\}}_K\times\dot{\tilde{\mathcal{B}}}^{\{M_p\}}\left(\RR^d\right)\longrightarrow \CC$, is a continuous bilinear mapping;
\item[v)] for all $\varphi,\psi\in\DD^{\{M_p\}}\left(\RR^d\right)$, $\left(\varphi*\check{S}\right)(\psi*T)\in L^1\left(\RR^d\right)$.
\end{itemize}
\end{theorem}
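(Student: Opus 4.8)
The plan is to route all equivalences through $ii)$: I would prove $i)\Leftrightarrow ii)$, $ii)\Leftrightarrow iii)$, $ii)\Leftrightarrow iv)$ and $ii)\Rightarrow v)$ as comparatively soft consequences of the topological tensor–product results of \S1, and concentrate the real effort on the single reverse implication $v)\Rightarrow ii)$.

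\emph{(a) $i)\Leftrightarrow ii)$.} For $\varphi\in\DD^{\{M_p\}}$ with $\supp\varphi\subseteq K_{\RR^d}(0,a)$ one has $\varphi^{\Delta}\in\tilde{\DD}^{\{M_p\}}_{L^{\infty}}(\RR^{2d})$ and, by the estimate underlying Lemma \ref{7}, multiplication by $\varphi^{\Delta}$ is a continuous map $\dot{\tilde{\mathcal{B}}}^{\{M_p\}}(\RR^{2d})\to\dot{\mathcal{B}}^{\{M_p\}}_a$; moreover, if $\varphi\equiv1$ on $K_{\RR^d}(0,a)$ then $\varphi^{\Delta}\Phi=\Phi$ for every $\Phi\in\dot{\mathcal{B}}^{\{M_p\}}_a$. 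Assuming $i)$, the functionals $(S\otimes T)\varphi^{\Delta}\in\tilde{\DD}'^{\{M_p\}}_{L^1}(\RR^{2d})$ obtained from different $\varphi$ that equal $1$ on $K_{\RR^d}(0,a)$ agree on $\dot{\mathcal{B}}^{\{M_p\}}_a$ (approximate $\Phi\in\dot{\mathcal{B}}^{\{M_p\}}_a$ by $\theta_n\in\DD^{\{M_p\}}_{\Delta_a}$ as in Lemma \ref{123}, with $\supp\theta_n\subseteq\Delta_a$, and use $(\varphi_1^{\Delta}-\varphi_2^{\Delta})\theta_n=0$); hence they glue to a continuous functional on $\dot{\mathcal{B}}^{\{M_p\}}_{\Delta}=\lim_{\longrightarrow}\dot{\mathcal{B}}^{\{M_p\}}_a$ extending $S\otimes T$, i.e. $ii)$. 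Conversely, $ii)$ gives $\langle(S\otimes T)\varphi^{\Delta},\Psi\rangle:=\langle S\otimes T,\varphi^{\Delta}\Psi\rangle$, a continuous functional on $\dot{\tilde{\mathcal{B}}}^{\{M_p\}}(\RR^{2d})$, which is the required extension of $(S\otimes T)\varphi^{\Delta}$, i.e. $i)$.

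\emph{(b) $ii)\Leftrightarrow iii)$ and $ii)\Leftrightarrow iv)$.} The shear $\kappa:(x,y)\mapsto(x+y,y)$ is a linear, hence topological, automorphism of $\dot{\tilde{\mathcal{B}}}^{\{M_p\}}(\RR^{2d})$, and it carries $\dot{\mathcal{B}}^{\{M_p\}}_{\Delta}$ onto the subspace of $\dot{\tilde{\mathcal{B}}}^{\{M_p\}}(\RR^{2d})$ consisting of the functions with compact support in the first $d$ variables; by Propositions \ref{130} and \ref{100} this subspace is topologically $\lim_{\longrightarrow,\,K}\DD^{\{M_p\}}_K\hat{\otimes}_{\epsilon}\dot{\tilde{\mathcal{B}}}^{\{M_p\}}(\RR^d)$, whose dual — $\DD^{\{M_p\}}_K$ being nuclear, by (\ref{1110}) — is $\lim_{\longleftarrow,\,K}\mathcal{L}\bigl(\DD^{\{M_p\}}_K,\tilde{\DD}'^{\{M_p\}}_{L^1}(\RR^d)\bigr)$. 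A Fubini computation for the tensor product gives $\langle S\otimes T,\varphi(x+y)\chi(y)\rangle=\langle(\varphi*\check{S})T,\chi\rangle$, so the transported functional corresponds exactly to the operator $\varphi\mapsto(\varphi*\check{S})T$; and $ii)$ is precisely the statement that this operator is continuous $\DD^{\{M_p\}}_K\to\tilde{\DD}'^{\{M_p\}}_{L^1}$ for every $K$, which, unwound in the $\epsilon$-topology, is the conjunction of the two assertions in $iii)$. (In the easy direction $ii)\Rightarrow iii)$, the continuity of $\varphi\mapsto(\varphi*\check{S})T$ is moreover delivered by Lemma \ref{27} together with Proposition \ref{25}.) The symmetric shear $(x,y)\mapsto(x,x+y)$ yields $ii)\Leftrightarrow iv)$. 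Together with (a) this gives $i)\Leftrightarrow ii)\Leftrightarrow iii)\Leftrightarrow iv)$.

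\emph{(c) $ii)\Rightarrow v)$, and the main obstacle in $v)\Rightarrow ii)$.} Fix $\varphi,\psi\in\DD^{\{M_p\}}$. For $\lambda\in\DD^{\{M_p\}}$ put $\Theta_{\lambda}(u,v)=\int_{\RR^d}\varphi(y+u)\psi(y-v)\lambda(y)\,dy$; then $\Theta_{\lambda}\in\DD^{\{M_p\}}(\RR^{2d})$ with support in a fixed $\Delta_a$ (since $u+v\in\supp\varphi-\supp\psi$ on $\supp\Theta_{\lambda}$), Fubini gives $\langle S\otimes T,\Theta_{\lambda}\rangle=\int(\varphi*\check{S})(y)(\psi*T)(y)\lambda(y)\,dy$, and, since $\supp(D^{\gamma}\varphi(\,\cdot\,+u))$ has measure independent of $u$, one estimates $\|\Theta_{\lambda}\|_{(t_j)}\le C\|\lambda\|_{L^{\infty}}$ for a suitable $(t_j)\in\mathfrak{R}$ not depending on $\lambda$. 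With the continuity of $S\otimes T$ on $\dot{\mathcal{B}}^{\{M_p\}}_a$ this yields $\bigl|\int(\varphi*\check{S})(\psi*T)\lambda\bigr|\le C'\|\lambda\|_{L^{\infty}}$ for all $\lambda\in\DD^{\{M_p\}}$; as $\DD^{\{M_p\}}$ is dense in $\mathcal{C}_0$ and $(\varphi*\check{S})(\psi*T)$ is continuous, $v)$ follows. For the converse the essential difficulty is that $(fT)*\psi\ne f\cdot(\psi*T)$, so the classical Schwartz–Shiraishi reductions do not transfer, and — the Roumieu spaces being non-metrizable — one cannot fall back on sequential density.

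\emph{(d) $v)\Rightarrow ii)$.} I would first upgrade $v)$: since $\DD^{\{M_p\}}_K$ is an ultrabornological (DFS)-space and $L^1$ is a Fréchet space, the closed graph theorem makes each $\varphi\mapsto(\varphi*\check{S})(\psi*T)$ (and, symmetrically, each $\psi\mapsto(\varphi*\check{S})(\psi*T)$) continuous into $L^1$, whence the bilinear map $(\varphi,\psi)\mapsto(\varphi*\check{S})(\psi*T)$, $\DD^{\{M_p\}}_{K_1}\times\DD^{\{M_p\}}_{K_2}\to L^1$, is (jointly) continuous. By (a) and (b), $ii)$ is equivalent to continuity of $\varphi\mapsto(\varphi*\check{S})T$ as a map $\DD^{\{M_p\}}_{K(0,a)}\to\tilde{\DD}'^{\{M_p\}}_{L^1}$; the source being bornological, it suffices that for each $h$ there are $(t_j)\in\mathfrak{R}$, $C>0$ with $|\langle(\varphi*\check{S})T,\chi\rangle|\le C\|\chi\|_{(t_j)}$ uniformly for $\chi\in\DD^{\{M_p\}}$ and $\varphi$ in the unit ball of $\DD^{\{M_p\},h}_{K(0,a)}$. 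To obtain this, I would write $\langle(\varphi*\check{S})T,\chi\rangle=\lim_n\int(\varphi*\check{S})(\psi_n*T)\chi$ for a $\DD^{\{M_p\}}$-mollifier net $(\psi_n)$, and exploit (i) that $v)$ is stable under ultradifferential operators in $S$ and in $T$ (because $P(D)\psi\in\DD^{\{M_p\}}$ for $\psi\in\DD^{\{M_p\}}$) and (ii) the representation $T\varphi\text{-pieces}=P(D)F_1+F_2$ with $F_i\in L^1$ for elements of $\tilde{\DD}'^{\{M_p\}}_{L^1}$ (Theorem 1 of \cite{PilipovicT}, via Proposition \ref{25}), reducing the required bound to $L^1$-estimates for terms of the type $(D^{\gamma}\varphi*\check{S})\,(\eta*P_{\gamma}(D)T)$, each controlled by the upgraded $v)$, while the Gevrey-type bounds on $D^{\gamma}\varphi$ keep the resulting family summable and the support constraint $\supp\subseteq\Delta_a$ is preserved throughout. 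I expect precisely this last reduction — converting the pointwise $L^1$-integrability in $v)$ into the uniform equicontinuity demanded by the $\varepsilon$-tensor-product description of $\dot{\mathcal{B}}^{\{M_p\}}_{\Delta}$ — to be the genuine technical core, as the Introduction's remark about the extra delicacy of the Roumieu case anticipates.
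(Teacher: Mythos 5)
Your overall architecture (hub at $ii)$, with $i)\Leftrightarrow ii)$, $ii)\Rightarrow v)$, and shear-based passages between $ii)$ and $iii)/iv)$) matches the paper's in spirit for parts (a) and (c), and your part (b), once made precise, is essentially the paper's $iii)\Rightarrow i)$ argument (the paper uses exactly the shear $\Theta(x,y)=(x+y,y)$, the cutoff map $F(\chi)=\theta\chi$, and Proposition \ref{100} to build $\tilde{G}\in\tilde{\DD}'^{\{M_p\}}_{L^1}(\RR^{2d})$ and identify it with ${}^t\tilde{\Theta}((S\otimes T)\varphi^{\Delta})$). However, two gaps remain. First, in (b) the direction $ii)\Rightarrow iii)$ does not simply ``unwind'' from a duality $\bigl(\lim_{\longrightarrow}\DD^{\{M_p\}}_K\hat{\otimes}_{\epsilon}\dot{\tilde{\mathcal{B}}}^{\{M_p\}}\bigr)'\cong\lim_{\longleftarrow}\mathcal{L}\bigl(\DD^{\{M_p\}}_K,\tilde{\DD}'^{\{M_p\}}_{L^1}\bigr)$: you still must show that the functional $\chi\mapsto\langle S\otimes T,\varphi^{\Delta}(x,y)\chi(y)\rangle$ on $\dot{\tilde{\mathcal{B}}}^{\{M_p\}}(\RR^d)$ coincides with the ultradistribution $(\varphi*\check{S})T$, which is a priori defined only on $\DD^{\{M_p\}}$. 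The paper does this by first proving $((\varphi*\check{S})T)*\psi\in L^1$ with a norm bound, transferring it by the closed graph theorem and a limiting argument to $\theta\in\DD^{M_p}_{F,(r'_j)}$, and invoking Corollary 1 of \cite{PilipovicT}; also the identification of the sheared $\dot{\mathcal{B}}^{\{M_p\}}_{\Delta}$ with that inductive limit of $\epsilon$-tensor products is asserted by you but is not a consequence of Propositions \ref{130} and \ref{100} as stated.

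The decisive gap is in (d), which you yourself flag as the technical core but do not actually prove. Your proposed reduction is circular: you invoke the representation $P(D)F_1+F_2$ with $F_i\in L^1$ (Theorem 1 of \cite{PilipovicT}) for the ``$T\varphi$-pieces'', but that representation is available only for elements already known to lie in $\tilde{\DD}'^{\{M_p\}}_{L^1}$, which is precisely what $v)\Rightarrow ii)$ must establish; and the identity $\langle(\varphi*\check{S})T,\chi\rangle=\lim_n\int(\varphi*\check{S})(\psi_n*T)\chi$ for $\chi\in\dot{\tilde{\mathcal{B}}}^{\{M_p\}}$ presupposes a uniform-in-$n$ bound that is the whole point. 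The paper's actual mechanism is different and non-circular: after upgrading $v)$ to joint continuity of $G(\varphi,\psi)=(\varphi*\check{S})(\psi*T)$ into $\mathcal{M}^1$ via the closed graph theorem, it extends $G$ through $\DD^{\{M_p\}}_{K\times K}\hat{\otimes}\dot{\tilde{\mathcal{B}}}^{\{M_p\}}(\RR^d)$ and, using Proposition \ref{100} on $\RR^{3d}$, obtains a single continuous functional $\tilde{G}\circ V$ on $\dot{\tilde{\mathcal{B}}}^{\{M_p\}}(\RR^{3d})$; evaluating it on the explicit function $f(x,y,t)=\varphi(x-y)\chi(x-t,t-y)\mu(x)$ with $\int\mu=1$ integrates out the $t$-variable and yields $\bigl|\langle(S\otimes T)\varphi^{\Delta},\chi\rangle\bigr|\leq C_1\|\varphi\|_{(t_j/4)}\|\mu\|_{(t_j/4)}\|\chi\|_{(t_j/4)}$ directly, giving $i)$. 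Without an argument of this kind (or a genuinely completed alternative), your $v)\Rightarrow ii)$ is a statement of intent rather than a proof.
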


\begin{proof}  $i)\Rightarrow ii)$.  Let $a>0$. Choose $\varphi\in\DD^{\{M_p\}}\left(\RR^d\right)$ such that $\varphi=1$ on
$K_{\RR^d}(0,a)$ and $\varphi=0$ on the complement of some bounded neighborhood of this set. Then, there exist $(t_j)\in\mathfrak{R}$ and $C>0$ such that $\left|\left\langle (S\otimes T)\varphi^{\Delta},\psi\right\rangle\right|\leq C\|\psi\|_{(t_j)}$ for all $\psi\in\DD^{\{M_p\}}_{\Delta_a}\left(\RR^{2d}\right)\subseteq \dot{\tilde{\mathcal{B}}}^{\{M_p\}}\left(\RR^{2d}\right)$. Since $\left\langle (S\otimes T)\varphi^{\Delta},\psi\right\rangle=\left\langle S\otimes T,\varphi^{\Delta}\psi\right\rangle
=\left\langle S\otimes T,\psi\right\rangle,$ it follows that $\left|\left\langle S\otimes T,\psi\right\rangle\right|\leq C\|\psi\|_{(t_j)}$ for all $\psi\in\DD^{\{M_p\}}_{\Delta_a}\left(\RR^{2d}\right)$. By lemma \ref{123}, it follows that $S\otimes T$ is a continuous linear mapping
from $\dot{\mathcal{B}}^{\{M_p\}}_a$ to $\CC$. Hence $S\otimes T\in \left(\dot{\mathcal{B}}^{\{M_p\}}_{\Delta}\right)'$.

$ii) \Rightarrow i)$. Let $\varphi\in\DD^{\{M_p\}}\left(\RR^d\right)$ with support in $K_{\RR^d}(0,a)$ for some $a>0$. Then, for that $a$, there exist $(t_j)\in\mathfrak{R}$ and $C>0$ such that $\left|\left\langle S\otimes T,\psi\right\rangle\right|\leq C\|\psi\|_{(t_j)}$ for all $\psi\in\dot{\mathcal{B}}^{\{M_p\}}_a$. Let $\psi\in\DD^{\{M_p\}}\left(\RR^{2d}\right)$. Then $\varphi^{\Delta}\psi\in\DD^{\{M_p\}}_{\Delta_a}\subseteq\dot{\mathcal{B}}^{\{M_p\}}_a$ and  by lemma \ref{7}
\begin{eqnarray*}
\left|\left\langle (S\otimes T)\varphi^{\Delta},\psi\right\rangle\right|=\left|\left\langle S\otimes T,\varphi^{\Delta}\psi\right\rangle\right|\leq C\left\|\varphi^{\Delta}\psi\right\|_{(t_j)}\leq \tilde{C}\left\|\psi\right\|_{(t'_j)},
\end{eqnarray*}
for some $(t'_j)\in\mathfrak{R}$ and $\tilde{C}>0$ that depend on $\varphi$ and $(t_j)$. Thus, $(S\otimes T)\varphi^{\Delta}\in\tilde{\DD}'^{\{M_p\}}_{L^1}$.\\
$ii)\Rightarrow iii)$.  Let $F$ and $K_1$ be compact subsets of $\RR^d$. Take $K$ to be a compact set in $\RR^d$ such that $F\subset\subset \mathrm{int}K$ and let $\varphi\in\DD^{\{M_p\}}_{K_1}$, $\psi\in\DD^{\{M_p\}}_K$ and $\chi\in\DD^{\{M_p\}}\left(\RR^d\right)$. Then
$$
\left\langle\left(\left(\varphi*\check{S}\right)T\right)*\psi,\chi\right\rangle=
\left\langle S\otimes T,\varphi(x+y)\left(\check{\psi}*\chi\right)(y)\right\rangle.
$$
There exists $a>0$ such that $\mathrm{supp\,} \varphi^{\Delta}(x,y)\left(\check{\psi}*\chi\right)(y)\subseteq\Delta_a$, for all $\varphi\in\DD^{\{M_p\}}_{K_1}$, $\psi\in\DD^{\{M_p\}}_K$ and $\chi\in\DD^{\{M_p\}}\left(\RR^d\right)$. Then, for that $a$, there exist $(t_j)\in\mathfrak{R}$ and $C_1>0$ such that $\left|\langle S\otimes T,\theta\rangle\right|\leq C_1\|\theta\|_{(t_j)}$ for all $\theta\in\dot{\mathcal{B}}^{\{M_p\}}_a$. So we obtain
$$
\left|\left\langle\left(\left(\varphi*\check{S}\right)T\right)*\psi,\chi\right\rangle\right|= C_1\left\|\varphi^{\Delta}(x,y)\left(\check{\psi}*\chi\right)(y)\right\|_{(t_j)}.
$$
We have
\begin{eqnarray*}
\frac{\left|D^{\alpha}_x D^{\beta}_y\left(\varphi^{\Delta}(x,y)\left(\check{\psi}*\chi\right)(y)\right)\right|} {T_{\alpha+\beta}M_{\alpha+\beta}}&\leq& \sum_{\delta\leq\beta}{\beta\choose\delta} \frac{\left|D^{\alpha+\beta-\delta}\varphi(x+y)\right|\left|D^{\delta}\left(\check{\psi}*\chi\right)(y)\right|} {T_{\alpha+\beta}M_{\alpha+\beta}}\\
&\leq& \|\varphi\|_{(t_j/2)}\sum_{\delta\leq\beta}{\beta\choose\delta} \frac{\left|D^{\delta}\left(\check{\psi}*\chi\right)(y)\right|}{2^{|\alpha|+|\beta|-|\delta|}T_{\delta}M_{\delta}}\\
&\leq& |K|\|\varphi\|_{(t_j/2)}\|\psi\|_{(t_j/2)}\|\chi\|_{L^{\infty}}.
\end{eqnarray*}
Hence $\left|\left\langle\left(\left(\varphi*\check{S}\right)T\right)*\psi,\chi\right\rangle\right|\leq
C_1|K|\|\varphi\|_{(t_j/2)}\|\psi\|_{(t_j/2)}\|\chi\|_{L^{\infty}}$, for all $\varphi\in\DD^{\{M_p\}}_{K_1}$, $\psi\in\DD^{\{M_p\}}_K$ and $\chi\in\DD^{\{M_p\}}\left(\RR^d\right)$. Thus,  $\left(\left(\varphi*\check{S}\right)T\right)*\psi\in\mathcal{M}^1$. Since $\left(\left(\varphi*\check{S}\right)T\right)*\psi\in \EE^{\{M_p\}}\left(\RR^d\right)$, it follows that $\left(\left(\varphi*\check{S}\right)T\right)*\psi\in L^1$. Let  $\varphi\in\DD^{\{M_p\}}_{K_1}$ be fixed. Then the mapping $\psi\mapsto\left(\left(\varphi*\check{S}\right)T\right)*\psi$, $\DD^{\{M_p\}}_K\longrightarrow \DD'^{\{M_p\}}$ is continuous and has a closed graph. Since $\left(\left(\varphi*\check{S}\right)T\right)*\psi\in L^1$ and if we consider the above  mapping from $\DD^{\{M_p\}}_K$ to $L^1$, it has a closed graph and so, it is continuous. ($L^1$ is a $(B)$ - space and $\DD^{\{M_p\}}_K$ is a $(DFS)$ - space.) Hence, there exist $(r_j)\in\mathfrak{R}$ and $C_1>0$ such that
\begin{eqnarray}\label{30}
\left\|\left(\left(\varphi*\check{S}\right)T\right)*\psi\right\|_{L^1}\leq C_1\|\psi\|_{K,(r_j)}.
\end{eqnarray}
By lemma \ref{3}, we can assume, without losing generality, that $(r_j)$ is such that $R_{j+k}\leq 2^{j+k}R_j R_k$, for all $j,k\in\NN$. Let $r'_j=r_j/(2H)$ and $\theta\in\DD^{M_p}_{F,(r'_j)}$. Then, there exist $\psi_n\in \DD^{\{M_p\}}_K$, $n\in\ZZ_+$ such that $\psi_n\longrightarrow\theta$ in $\DD^{M_p}_{K,(r_j)}$. The mapping $\theta\mapsto\left(\left(\varphi*\check{S}\right)T\right)*\theta$, $\DD^{M_p}_{K,(r_j)}\longrightarrow \DD'^{\{M_p\}}$ is continuous. So, if $\psi_n\in \DD^{\{M_p\}}_K$ tends to $\theta\in\DD^{M_p}_{F,(r'_j)}$ in the topology of $\DD^{M_p}_{K,(r_j)}$ then $\left(\left(\varphi*\check{S}\right)T\right)*\psi_n\longrightarrow \left(\left(\varphi*\check{S}\right)T\right)*\theta$ in $\DD'^{\{M_p\}}$. By (\ref{30}), we have $\left\|\left(\left(\varphi*\check{S}\right)T\right)*\psi_n\right\|_{L^1}\leq C_1\|\psi_n\|_{K,(r_j)}$. So, $\left(\left(\varphi*\check{S}\right)T\right)*\psi_n$ is a Cauchy sequence in $L^1$, hence it must be convergent and it must converge to $\left(\left(\varphi*\check{S}\right)T\right)*\theta$, because it converge to that ultradistribution in $\DD'^{\{M_p\}}$. Consequently, $\left(\left(\varphi*\check{S}\right)T\right)*\theta\in L^1$ for all $\theta\in \DD^{M_p}_{F,(r'_j)}$ and if we let $n\longrightarrow \infty$ in the last inequality, we get $\ds\left\|\left(\left(\varphi*\check{S}\right)T\right)*\theta\right\|_{L^1}\leq C_1\|\theta\|_{K,(r_j)}$, for all $\theta\in \DD^{M_p}_{F,(r'_j)}$. By corollary 1 of \cite{PilipovicT}, it follows that $\left(\varphi*\check{S}\right)T\in\tilde{\DD}'^{\{M_p\}}_{L^1}$. Now, we  prove that the mapping $(\varphi,\chi)\mapsto \left\langle\left(\varphi*\check{S}\right)T,\chi\right\rangle$, $\DD^{\{M_p\}}_K\left(\RR^d\right)\times \dot{\tilde{\mathcal{B}}}^{\{M_p\}}\left(\RR^d\right) \longrightarrow \CC$, is continuous, for every compact set $K$. There exists $a>0$ such that $K\subset\subset _{\RR^d}(0,a)$. Take $\theta\in\DD^{\{M_p\}}$ such that $\theta=1$ on $K_{\RR^d}(0,a)$ and $\theta=0$ on the complement of some bounded neighborhood of this ball. Then $\varphi^{\Delta}\theta^{\Delta}=\varphi^{\Delta}$ for all $\varphi\in\DD^{\{M_p\}}_K$. Let $\varphi\in \DD^{\{M_p\}}_K$ and $\chi,\psi_n\in\DD^{\{M_p\}}$, $n\in\ZZ_+$, such that $\psi_n\longrightarrow \delta$, when $n$ tends to infinity, in $\EE'^{\{M_p\}}$.  Then
\begin{eqnarray*}
\left\langle \left(\varphi*\check{S}\right)T,\chi\right\rangle&=&\lim_{n\rightarrow\infty}\left\langle \left(\left(\varphi*\check{S}\right)T\right)*\psi_n,\chi\right\rangle\\
&=&\lim_{n\rightarrow\infty}\left\langle S\otimes T,\varphi^{\Delta}(x,y)\left(\check{\psi}_n*\chi\right)(y)\right\rangle
=\left\langle S\otimes T,\varphi^{\Delta}(x,y)\chi(y)\right\rangle\\
&=&\left\langle S\otimes T,\varphi^{\Delta}(x,y)\theta^{\Delta}(x,y)\chi(y)\right\rangle
=\left\langle (S\otimes T)\varphi^{\Delta},1_x\otimes\chi(y)\right\rangle,
\end{eqnarray*}
where the last tow terms are in the sense of the duality $\left\langle\tilde{\tilde{\DD}}^{\{M_p\}}_{L^{\infty}}, \left(\tilde{\tilde{\DD}}^{\{M_p\}}_{L^{\infty}}\right)'\right\rangle$. Now, let $\chi\in\dot{\tilde{\mathcal{B}}}^{\{M_p\}}\left(\RR^d\right)$ and take $\psi\in\DD^{\{M_p\}}\left(\RR^d\right)$ such that $\psi=1$ on $K_{\RR^d}(0,1)$ and $\psi=0$ out of $K_{\RR^d}(0,2)$. Put $\psi_n(x)=\psi(x/n)$, $n\in\ZZ_+$, and $\chi_n(x)=\psi_n(x)\chi(x)$. Then, one easily checks that $1_x\otimes\chi_n(y)\longrightarrow 1_x\otimes \chi(y)$ in $\tilde{\tilde{\DD}}^{\{M_p\}}_{L^{\infty}}\left(\RR^{2d}\right), n\longrightarrow\infty$. Because $\left(\varphi*\check{S}\right)T\in\left(\tilde{\tilde{\DD}}^{\{M_p\}}_{L^{\infty}}\right)'\left(\RR^d\right) =\tilde{\DD}'^{\{M_p\}}_{L^1}\left(\RR^d\right)$ and $(S\otimes T) \varphi^{\Delta}\in\left(\tilde{\tilde{\DD}}^{\{M_p\}}_{L^{\infty}}\right)'\left(\RR^{2d}\right) =\tilde{\DD}'^{\{M_p\}}_{L^1}\left(\RR^{2d}\right)$ (c.f. proposition \ref{25}), we have
\begin{eqnarray*}
\left\langle \left(\varphi*\check{S}\right)T,\chi\right\rangle&=&\lim_{n\rightarrow\infty}\left\langle \left(\varphi*\check{S}\right)T,\chi_n\right\rangle=\lim_{n\rightarrow\infty}\left\langle (S\otimes T)\varphi^{\Delta},1_x\otimes\chi_n(y)\right\rangle\\
&=&\left\langle (S\otimes T)\varphi^{\Delta},1_x\otimes\chi(y)\right\rangle, \varphi\in \DD^{\{M_p\}}_K, \chi\in\dot{\tilde{\mathcal{B}}}^{\{M_p\}}\left(\RR^d\right).
\end{eqnarray*}
 Also $(S\otimes T)\theta^{\Delta}\in\left(\tilde{\tilde{\DD}}^{\{M_p\}}_{L^{\infty}}\right)'\left(\RR^{2d}\right)$ and by the construction of $\theta$, $(S\otimes T)\theta^{\Delta}\varphi^{\Delta}=(S\otimes T)\varphi^{\Delta}$. Hence
\begin{eqnarray}\label{40}
\left\langle \left(\varphi*\check{S}\right)T,\chi\right\rangle=\left\langle (S\otimes T)\theta^{\Delta},\varphi^{\Delta}(x,y)
\chi(y)\right\rangle, \varphi\in \DD^{\{M_p\}}_K, \chi\in\dot{\tilde{\mathcal{B}}}^{\{M_p\}}\left(\RR^d\right).
\end{eqnarray}
Since the bilinear mapping $$(\varphi(x),\chi(y))\mapsto\varphi^{\Delta}(x,y)\chi(y), \DD^{\{M_p\}}_K\times \dot{\tilde{\mathcal{B}}}^{\{M_p\}}\left(\RR^d\right)\longrightarrow \tilde{\tilde{\DD}}^{\{M_p\}}_{L^{\infty}}\left(\RR^{2d}\right)$$ is continuous and  $(S\otimes T)\theta^{\Delta}\in\left(\tilde{\tilde{\DD}}^{\{M_p\}}_{L^{\infty}}\right)'\left(\RR^{2d}\right)$, it follows that the bilinear mapping $$(\varphi(x),\chi(y))\mapsto\left\langle (S\otimes T)\theta^{\Delta},\varphi^{\Delta}(x,y)\chi(y)\right\rangle, \DD^{\{M_p\}}_K\times \dot{\tilde{\mathcal{B}}}^{\{M_p\}}\left(\RR^d\right)\longrightarrow \CC$$ is continuous. Hence, by (\ref{40}), we obtain the desired continuity.\\
\indent $ii)\Rightarrow iv)$ The proof is analogous to $ii)\Rightarrow iii)$.\\
\indent $ii)\Rightarrow v)$. Let  $K\subset\subset\RR^d$ and let $\varphi,\psi\in\DD^{\{M_p\}}_K$, $\chi\in\DD^{\{M_p\}}$. Then
\begin{eqnarray*}
\left\langle \left(\varphi*\check{S}\right)(\psi*T),\chi\right\rangle&=&\left\langle\langle S(x),\varphi(x+t)\rangle\langle T(y),\psi(t-y)\rangle,\chi(t)\right\rangle\\
&=&\left\langle \left((S\otimes T)(x,y)\right)\otimes 1_t,\varphi(x+t)\psi(t-y)\chi(t)\right\rangle\\
&=&\left\langle (S\otimes T)(x,y),\int_{\RR^d}\varphi(x+t)\psi(t-y)\chi(t)dt\right\rangle.
\end{eqnarray*}
Let $\ds\theta(x,y)=\int_{\RR^d}\varphi(x+t)\psi(t-y)\chi(t)dt$. Let $a>0$ be such that $K\subset\subset
K_{\RR^d}(0,a)$. One prove that $\mathrm{supp\,} \theta\subseteq \Delta_{2a}$.  Now, because $\varphi,\psi\in\DD^{\{M_p\}}_K$, there exist $h_1,h_2,C_1,C_2>0$ such that $\left|D^{\alpha}\varphi(x)\right|\leq C_1 h^{|\alpha|}_1M_{\alpha}$ and $\left|D^{\alpha}\psi(x)\right|\leq C_2 h^{|\alpha|}_2M_{\alpha}$. Let $(t_j)\in\mathfrak{R}$. We have
\begin{eqnarray*}
\frac{\left|D^{\alpha}_x D^{\beta}_y \theta(x,y)\right|}{T_{\alpha+\beta}M_{\alpha+\beta}}&\leq& \int_{\RR^d}\frac{\left|D^{\alpha}\varphi(x+t)\right|\left|D^{\beta}\psi(t-y)\right||\chi(t)|} {T_{\alpha+\beta}M_{\alpha+\beta}}dt\\
&\leq&\|\chi\|_{L^{\infty}}\int_{K} \frac{\left|D^{\alpha}\varphi(x+y+t)\right|\left|D^{\beta}\psi(t)\right|}{T_{\alpha+\beta}M_{\alpha+\beta}}dt\leq C_1C_2C_3|K|\|\chi\|_{L^{\infty}}.
\end{eqnarray*}
It follows that the mapping $\ds\chi\mapsto\int_{\RR^d}\varphi(x+t)\psi(t-y)\chi(t)dt$, $\mathcal{C}_0\left(\RR^d\right)\longrightarrow \dot{\mathcal{B}}^{\{M_p\}}_{2a}$ is continuous, i.e. this mapping is continuous as a mapping from $\mathcal{C}_0\left(\RR^d\right)$ to $\dot{\mathcal{B}}^{\{M_p\}}_{\Delta}$. But, $S\otimes T\in\left(\dot{\mathcal{B}}^{\{M_p\}}_{\Delta}\right)'$, so the mapping
$$\ds\chi\mapsto \left\langle (S\otimes T)(x,y),\int_{\RR^d}\varphi(x+t)\psi(t-y)\chi(t)dt\right\rangle, \;  \mathcal{C}_0\left(\RR^d\right)\longrightarrow \CC,$$ is continuous. Since $\left(\varphi*\check{S}\right)(\psi*T)\in \mathcal{M}^1$ and it belongs to $\EE^{\{M_p\}}$, it follows $\left(\varphi*\check{S}\right)(\psi*T)\in L^1$.\\
\indent  $iii)\Rightarrow i)$.  Let $\varphi\in\DD^{\{M_p\}}\left(\RR^d\right)$  and let $K\subset\subset\RR^d$ such that $\mathrm{supp\,} \varphi\subset\subset \mathrm{int}K$. By the assumption, the bilinear mapping $G:\DD^{\{M_p\}}_K\times \dot{\tilde{\mathcal{B}}}^{\{M_p\}}\left(\RR^d\right)\longrightarrow \CC$, $G(\psi,\chi)= \left\langle \left((\psi\varphi)*\check{S}\right)T,\chi\right\rangle$, is continuous. Hence $G$ extends to a linear continuous mapping, $\hat{G}$, on the completion of the tensor product $\DD^{\{M_p\}}_K\hat{\otimes} \dot{\tilde{\mathcal{B}}}^{\{M_p\}}\left(\RR^d\right)$ ($\DD^{\{M_p\}}_K$ is nuclear and the $\pi$ topology coincides with the $\epsilon$ topology). Let $\theta\in\DD^{\{M_p\}}_K$ be a function such that $\theta=1$ on $\mathrm{supp\,} \varphi$. Then, the mapping $F:\dot{\tilde{\mathcal{B}}}^{\{M_p\}}\left(\RR^d\right)\longrightarrow \DD^{\{M_p\}}_K$, $F(\chi)=\theta\chi$ is continuous. So, the mapping
\begin{eqnarray*}
F\otimes_{\epsilon} \mathrm{Id}:\dot{\tilde{\mathcal{B}}}^{\{M_p\}}\left(\RR^d\right)\otimes_{\epsilon} \dot{\tilde{\mathcal{B}}}^{\{M_p\}}\left(\RR^d\right)\longrightarrow \DD^{\{M_p\}}_K\otimes_{\epsilon}\dot{\tilde{\mathcal{B}}}^{\{M_p\}}\left(\RR^d\right)
\end{eqnarray*}
is continuous and by proposition \ref{100}, we have the continuous extension  $F\hat{\otimes}_{\epsilon} \mathrm{Id}:\dot{\tilde{\mathcal{B}}}^{\{M_p\}}\left(\RR^{2d}\right)\longrightarrow \DD^{\{M_p\}}_K\hat{\otimes}_{\epsilon}\dot{\tilde{\mathcal{B}}}^{\{M_p\}}\left(\RR^d\right)$. Thus, we have the continuous mapping $$\ds\tilde{G}:\dot{\tilde{\mathcal{B}}}^{\{M_p\}}\left(\RR^{2d}\right)\xrightarrow{F\hat{\otimes}_{\epsilon} Id} \DD^{\{M_p\}}_K \hat{\otimes}_{\epsilon} \dot{\tilde{\mathcal{B}}}^{\{M_p\}}\left(\RR^d\right)\xrightarrow{\hat{G}} \CC,$$ i.e. $\tilde{G}\in\tilde{\DD}'^{\{M_p\}}_{L^1}\left(\RR^{2d}\right)$. For $\psi,\chi\in\DD^{\{M_p\}}\left(\RR^d\right)$,
\begin{eqnarray*}
\tilde{G}(\psi\otimes\chi)&=&\hat{G}\left(F(\psi)\otimes\chi\right)=G(\theta\psi,\chi)= \left\langle\left((\theta\psi\varphi)*\check{S}\right)T,\chi\right\rangle\\
&=&\langle (S\otimes T) \varphi^{\Delta},\psi(x+y)\chi(y)\rangle.
\end{eqnarray*}
Let $\Theta$ be the linear transformation $\Theta(x,y)=(x+y,y)$ and denote by $\tilde{\Theta}$ the linear operator $\tilde{\Theta}f(x',y')=f\circ\Theta(x,y)=f(x+y,y)$. It is obviously an isomorphism of $\DD^{\{M_p\}}\left(\RR^{2d}\right)$ and of $\dot{\tilde{\mathcal{B}}}^{\{M_p\}}\left(\RR^{2d}\right)$, hence, the transposed mapping $^t\tilde{\Theta}$ is an isomorphism of $\DD'^{\{M_p\}}\left(\RR^{2d}\right)$ and of $\tilde{\DD}'^{\{M_p\}}_{L^1}\left(\RR^{2d}\right)$. It follows that $\tilde{G}(\psi\otimes\chi)=\left\langle (S\otimes T)\varphi^{\Delta},\tilde{\Theta}(\psi\otimes\chi)\right\rangle=\left\langle {}^t\tilde{\Theta}\left((S\otimes T)\varphi^{\Delta}\right),\psi\otimes\chi\right\rangle$. Because $\DD^{\{M_p\}}\left(\RR^d\right)\otimes\DD^{\{M_p\}}\left(\RR^d\right)$ is dense in $\DD^{\{M_p\}}\left(\RR^{2d}\right)$, $\tilde{G}={}^t\tilde{\Theta}\left((S\otimes T) \varphi^{\Delta}\right)$ in $\DD'^{\{M_p\}}\left(\RR^{2d}\right)$. $\tilde{G}\in \tilde{\DD}'^{\{M_p\}}_{L^1}\left(\RR^{2d}\right)$, so ${}^t\tilde{\Theta}\left((S\otimes T)\varphi^{\Delta}\right)\in\tilde{\DD}'^{\{M_p\}}_{L^1}\left(\RR^{2d}\right)$, i.e. $(S\otimes T)\varphi^{\Delta}\in \tilde{\DD}'^{\{M_p\}}_{L^1}\left(\RR^{2d}\right)$.\\
\indent $iv)\Rightarrow i)$ The proof  is analogous to the previous one.\\
\indent $v)\Rightarrow i)$. Let $K$ and $K_1$ be compact subsets of $\RR^d$ such that $K_1\subset\subset \mathrm{int}K$ and both satisfy the cone property (for the definition of the cone property see \cite{Komatsu2}, p. 614). Observe the mapping $G:\DD^{\{M_p\}}_K\times \DD^{\{M_p\}}_K\longrightarrow \mathcal{M}^1$, $G(\varphi,\psi)=\left(\varphi*\check{S}\right)(\psi*T)$. Note that the mapping $\varphi\mapsto\left(\varphi*\check{S}\right)(\psi*T)$ is continuous from $\DD^{\{M_p\}}_K$ to $\DD'^{\{M_p\}}$ and hence, it has a closed graph. Because $\mathcal{M}^1$ is a $(B)$ - space and $\DD^{\{M_p\}}_K$ is a $(DFS)$ - space, from the closed graph theorem, it follows that $G$ is separately continuous in $\varphi$ and similarly in $\psi$. $\DD^{\{M_p\}}_K$ is a $(DFS)$ - space, hence $G$ is continuous. It can be extended to a continuous mapping, $\hat{G}$, on the completion of the tensor product $\DD^{\{M_p\}}_K\hat{\otimes} \DD^{\{M_p\}}_K$. Since $\DD^{\{M_p\}}_K\hat{\otimes} \DD^{\{M_p\}}_K\cong\DD^{\{M_p\}}_{K\times K}$ (theorem 2.1. of \cite{Komatsu2}), the mapping $\DD^{\{M_p\}}_{K\times K}\times \mathcal{C}_0\left(\RR^d\right)\longrightarrow \CC$, $(f,\theta)\mapsto \langle \hat{G}(f),\theta\rangle$, is continuous because it is the composition of the mappings
\begin{eqnarray*}
\DD^{\{M_p\}}_{K\times K}\times \mathcal{C}_0\left(\RR^d\right)\xrightarrow{\hat{G}\times Id} \mathcal{M}^1\left(\RR^d\right)\times \mathcal{C}_0\left(\RR^d\right)\xrightarrow{\langle \cdot, \cdot\rangle} \CC,
\end{eqnarray*}
where the last mapping is the duality of $\mathcal{C}_0$ and $\mathcal{M}^1$. Hence,  the mapping $\DD^{\{M_p\}}_{K\times K}\times \dot{\tilde{\mathcal{B}}}^{\{M_p\}}\left(\RR^d\right)\longrightarrow \CC$, $(f,\chi)\mapsto \langle \hat{G}(f),\chi\rangle$, is continuous. So, this mapping can be extended to $\tilde{G}$ on the completion of the tensor product $\DD^{\{M_p\}}_{K\times K}\hat{\otimes} \dot{\tilde{\mathcal{B}}}^{\{M_p\}}$.  Take $\theta\in\DD^{\{M_p\}}_K$ such that $\theta=1$ on $K_1$ and put $\theta_1(x)=\theta(x)$ and $\theta_2(y)=\theta(y)$.  Because  $\psi\mapsto \theta_1\theta_2\psi$, $\dot{\tilde{\mathcal{B}}}^{\{M_p\}}\left(\RR^{2d}\right)\longrightarrow \DD^{\{M_p\}}_{K\times K}$, is continuous, the mapping $\psi\otimes \varphi\mapsto \theta_1\theta_2\psi\otimes \varphi$, $\dot{\tilde{\mathcal{B}}}^{\{M_p\}}\left(\RR^{2d}\right)\otimes_{\epsilon} \dot{\tilde{\mathcal{B}}}^{\{M_p\}}\left(\RR^d\right) \longrightarrow\DD^{\{M_p\}}_{K\times K}\otimes_{\epsilon} \dot{\tilde{\mathcal{B}}}^{\{M_p\}}\left(\RR^d\right)$ is continuous and it extends to a continuous mapping $V$ on the completion
of these spaces. By proposition \ref{100}, the composition $\tilde{G}\circ V$ is  continuous from $\dot{\tilde{\mathcal{B}}}^{\{M_p\}}\left(\RR^{3d}\right)$ to $\CC$. That means that there exist $(t_j)\in\mathfrak{R}$ and $C_1>0$ such that $\left|\tilde{G}\circ V(f)\right|\leq C_1\|f\|_{(t_j)}$, for all $f\in \dot{\tilde{\mathcal{B}}}^{\{M_p\}}\left(\RR^{3d}\right)$. Let $\varphi,\psi,\chi\in \DD^{\{M_p\}}\left(\RR^d\right)$, then
\begin{eqnarray*}
\tilde{G}\circ V(\varphi\otimes \psi\otimes \chi)&=& \tilde{G}(\theta_1\varphi\otimes \theta_2 \psi\otimes \chi)
=\left\langle\left((\theta_1\varphi)*\check{S}\right)((\theta_2\psi)*T),\chi\right\rangle\\
&=&\langle (S(x)\otimes T(y))\otimes 1_t,\theta_1(x+t)\varphi(x+t)\theta_2(t-y)\psi(t-y)\chi(t)\rangle.
\end{eqnarray*}
By nuclearity and theorem 2.1. of \cite{Komatsu2}, we have continuous dense inclusions
$$
\left(\DD^{\{M_p\}}\left(\RR^d\right)\otimes \DD^{\{M_p\}}\left(\RR^d\right)\right)\otimes \DD^{\{M_p\}}\left(\RR^d\right)\longrightarrow \DD^{\{M_p\}}\left(\RR^{3d}\right).$$
So, for $\tilde{\varphi}\in \DD^{\{M_p\}}\left(\RR^{3d}\right)$, there exists a net $\tilde{\varphi}_{\nu}\in \left(\DD^{\{M_p\}}\left(\RR^d\right)\otimes \DD^{\{M_p\}}\left(\RR^d\right)\right)\otimes \DD^{\{M_p\}}\left(\RR^d\right)$ such that $\tilde{\varphi}_{\nu}\longrightarrow \tilde{\varphi}$ in $\DD^{\{M_p\}}\left(\RR^{3d}\right)$. But then the convergence holds in $\dot{\tilde{\mathcal{B}}}^{\{M_p\}}\left(\RR^{3d}\right)$ and, for $\tilde{\varphi}\in \DD^{\{M_p\}}\left(\RR^{3d}\right),$
\begin{eqnarray*}
\tilde{G}\circ V(\tilde{\varphi})=\langle (S(x)\otimes T(y))\otimes 1_t,\theta_1(x+t)\theta_2(t-y)\tilde{\varphi}(x+t,t-y,t)\rangle.
\end{eqnarray*}
 Let $\varphi\in\DD^{\{M_p\}}\left(\RR^d\right)$, $K_1=K_{\RR^d}(0,a)$, where $ a>0$ is such that $\mathrm{supp\,} \varphi\subset\subset \mathrm{int}K_1$. Let $K=K_{\RR^d}(0,a+2)$ and $K'=K_{\RR^d}(0,a+1)$. Choose $\theta\in\DD^{\{M_p\}}\left(\RR^d\right)$ to be equal to $1$ on $K'$ and has a support in $\mathrm{int}K$.  Take $\mu\in\DD^{\{M_p\}}\left(\RR^d\right)$ with support in the open unit ball and $\ds \int_{\RR^d}\mu(x)dx=1$. Let $\chi\in\DD^{\{M_p\}}\left(\RR^{2d}\right)$ be arbitrary and consider the function $f(x,y,t)=\varphi(x-y)\chi(x-t,t-y)\mu(x)$. Obviously $f\in \DD^{\{M_p\}}\left(\RR^{3d}\right)$ and
\begin{eqnarray*}
\tilde{G}\circ V(f)&=&\langle (S(x)\otimes T(y))\otimes 1_t,\theta_1(x+t)\theta_2(t-y)f(x+t,t-y,t)\rangle\\
&=&\langle (S(x)\otimes T(y))\otimes 1_t,\theta_1(x+t)\theta_2(t-y)\varphi(x+y)\chi(x,y)\mu(x+t)\rangle.
\end{eqnarray*}
By construction $\theta_1(x+t)\mu(x+t)=\mu(x+t)$, for all $x,t\in\RR^d$. Let $x,y,t\in\RR^d$ are such that $\varphi(x+y)\mu(x+t)\neq 0$. Then $|x+y|<a$ and $|x+t|<1$. So, $|t-y|\leq |x+y|+|x+t|<a+1$, hence $\theta_2(t-y)=1$. We have
\begin{eqnarray*}
\tilde{G}\circ V(f)&=&\langle (S(x)\otimes T(y))\otimes 1_t,\varphi(x+y)\chi(x,y)\mu(x+t)\rangle\\
&=&\left\langle S(x)\otimes T(y),\varphi(x+y)\chi(x,y)\int_{\RR^d}\mu(x+t)dt\right\rangle\\
&=&\left\langle (S(x)\otimes T(y))\varphi^{\Delta}(x,y), \chi(x,y)\right\rangle.
\end{eqnarray*}
One easily obtains the following estimate for the derivatives of $f$:
\begin{eqnarray*}
\frac{\left|D^{\alpha}_x D^{\beta}_y D^{\gamma}_t f(x,y,t)\right|}{T_{\alpha+\beta+\gamma}M_{\alpha+\beta+\gamma}}\leq \|\varphi\|_{(t_j/4)}\|\mu\|_{(t_j/4)}\|\chi\|_{(t_j/4)}.
\end{eqnarray*}
Hence,
$$
\left|\left\langle (S(x)\otimes T(y))\varphi^{\Delta}(x,y), \chi(x,y)\right\rangle\right|=\left|\tilde{G}\circ V(f)\right|\leq C_1\|f\|_{(t_j)}$$
$$
\leq C_1\|\varphi\|_{(t_j/4)}\|\mu\|_{(t_j/4)}\|\chi\|_{(t_j/4)}, \chi\in\DD^{\{M_p\}}\left(\RR^{2d}\right).
$$
 Since $\DD^{\{M_p\}}\left(\RR^{2d}\right)$ is dense in $\dot{\tilde{\mathcal{B}}}^{\{M_p\}}\left(\RR^{2d}\right)$, the proof follows.\qed
\end{proof}

\begin{remark}
Let $\chi\in\DD^{\{M_p\}}\left(\RR^d\right)$ is equal to $1$ on the $K_{\RR^d}(0,1)$ and has a support in $K_{\RR^d}(0,2)$. Put $\chi_n(x)=\chi(x/n)$, $n\in\ZZ_+$. If for $S$ and $T$ the equivalent conditions of the above theorem hold and $\varphi\in\DD^{\{M_p\}}\left(\RR^d\right)$, then, similarly as in the proof of $ii)\Rightarrow iii)$, we can prove that $\left\langle \left(\varphi*\check{S}\right)T,\chi_n\right\rangle=\left\langle (S\otimes T) \varphi^{\Delta},1_x\otimes\chi_n(y)\right\rangle$. But then, by construction, $\chi_n\longrightarrow 1$ in $\tilde{\tilde{\DD}}^{\{M_p\}}_{L^{\infty}}\left(\RR^d\right)$ and $1_x\otimes \chi_n(y)\longrightarrow 1_{x,y}$ in $\tilde{\tilde{\DD}}^{\{M_p\}}_{L^{\infty}}\left(\RR^{2d}\right)$. Hence $\langle S*T,\varphi\rangle=\left\langle (S\otimes T) \varphi^{\Delta},1\right\rangle=\left\langle \left(\varphi*\check{S}\right)T,1\right\rangle$. Similarly, $\langle S*T,\varphi\rangle=\left\langle \left(\varphi*\check{T}\right)S,1\right\rangle$.
\end{remark}


\end{document}